\documentclass[a4paper,11pt,reqno]{amsart}

\usepackage{iftex}
\ifPDFTeX
  \usepackage[utf8]{inputenc}
  \usepackage[T1]{fontenc}
  \usepackage{lmodern}
\else
  \usepackage[no-math]{fontspec} 
\fi

\usepackage[hmargin=2.5cm,vmargin=2.5cm]{geometry}
\usepackage[dvipsnames]{xcolor}
\usepackage{enumitem}
\usepackage{graphicx}
\usepackage{tikz}
\usetikzlibrary{calc,decorations.pathreplacing,patterns,arrows.meta}
\usepackage{pgfplots}
\pgfplotsset{compat=1.18}

\usepackage{csquotes}
\usepackage[backend=biber,
  style=alphabetic,
  maxbibnames=99,
  doi=false, url=false, giveninits=true, date=year, doi=false]{biblatex}
\AtEveryBibitem{\clearfield{issn}}
\usepackage{macros}

\usepackage[colorlinks, linkcolor={blue}, citecolor={red}]{hyperref}

\newcommand{\sprod}[2]{\langle #1, #2 \rangle_{\C^N}} 

\begin{document}
\title[The Keller problem for Dirac operators with Lorentz scalar potentials]{The Keller problem for the lowest positive eigenvalue of Dirac operators with Lorentz scalar potentials}
\date{\today}

\author{David Gontier}
\address[David Gontier]{CERMICS, CNRS, ENPC, Institut Polytechnique de Paris, Marne-la-Vall\'ee, France.}
\email{david.gontier@enpc.fr}

\author{Hanne Van Den Bosch}
\address[Hanne Van Den Bosch]{Departamento de Ingeniería Matemática and Center for Mathematical Modeling, Facultad de Ciencias Físicas y Matemáticas, Universidad de Chile and CNRS IRL 2807,
Beauchef 851, Piso 5, Santiago, Chile.}
\email{hvdbosch@dim.uchile.cl}


\begin{abstract}
    We study the Keller problem for the Dirac operator $\Dirac_m - V\beta$ in $\R^d$, and minimize its lowest positive eigenvalue over all potentials $V$ with prescribed $L^p$ norm. We prove that optimal potentials always exist and that the corresponding eigenspinors solve the nonlinear Soler equation. This provides a variational proof of existence of solitary waves for this equation. We prove that there are no zero modes for such operators in dimensions $1$ and $2$, and conjecture that zero mode may exist in dimension $3$. In dimension $d = 2$, we provide numerical evidence that the optimal potentials concentrate on flower-shape regions, which are not necessarily radial, showing that Faber--Krahn type results may fail for such Dirac operators. In dimension $1$, all quantities are explicit.
\end{abstract}

\maketitle

\addtocontents{toc}{\protect\setcounter{tocdepth}{-1}}
\tableofcontents
\addtocontents{toc}{\protect\setcounter{tocdepth}{2}}

\section{Introduction and main results}
\label{sec:intro}

The original problem raised by Keller~\cite{Kel-61} consists of minimizing the lowest eigenvalue of the Schr\"odinger operator $-\Delta - V$ over all nonnegative potentials $V$ with fixed $L^p$ norm, namely
\[
    \inf\bigl\{ \lambda_1(- \Delta - V), \quad V \in L^p(\R^d, \R^+), \  \|V\|_{L^p} = \alpha \bigr\}.
\]
It is nowadays a classical fact that when $p > d/2$, this minimization problem is well-posed, in the sense that it admits optimizers~\cite{LieThi-76}. The optimal potential is radially symmetric, and the associated optimal eigenfunction is, up to scaling and phase, the unique positive ground state of the nonlinear Schr\"odinger equation (see~\cite{Cof-72, Kwo-89, McL-93})
\[
    -\Delta \phi - |\phi|^{\frac{2}{p-1}} \phi = \lambda_1 \phi, \qquad \lambda_1 < 0.
\]

The analogue of Keller's problem for relativistic systems requires more care, since the free Dirac operator $\Dirac_m = - \ri \balpha \cdot \nabla + m\beta$ has spectrum $(-\infty, -m] \cup [m, +\infty)$ (see below for details), and therefore is not bounded from below. A perturbation of the form $-V\bbI_N$ can create eigenvalues in the gap $(-m, m)$, and the relevant quantity is the lowest eigenvalue {\em in the gap}. We studied the Keller problem for $\Dirac_m - V$ in~\cite{DolGonPiz-23} (see also~\cite{AlaLi-92, AlaLi-92a} for related problems in the gap of general operators), and proved that this problem is well-posed whenever $p > d$, and that the corresponding eigenspinor, after a suitable scaling, solves the nonlinear Dirac equation with Kerr nonlinearity
\[
    \Dirac_m \psi - | \psi |^{\frac{2}{p-1}} \psi = \lambda \psi, \qquad -m < λ < m.
\]

\medskip

The goal of this article is to study the Keller problem for the Soler--Dirac operator. In this case, the free Dirac operator is perturbed by $-V\beta$, leading to the operator
\[
    \Dirac_m - V \beta =  - \ri \balpha \cdot \nabla + (m - V) \beta.
\]
The potential $V \in L^p(\R^d, \R)$ is therefore seen as a perturbation of the mass $m > 0$. This type of coupling appears in the classical models of self-interacting fermions of Ivanenko~\cite{Iva-38} and Soler~\cite{Sol-70} (see also~\cite{GroNev-74} in dimension $d=1$), in which the self-interaction depends on the Lorentz scalar $\overline{\psi}\psi := \sprod{\psi}{\beta \psi}$. In dimensions $d ∈ \{ 1, 2, 3\}$, the operator $\Dirac_m - V\beta$ enjoys extra symmetries, which implies that its spectrum is symmetric with respect to the origin (see Proposition~\ref{prop:symmetries} and Remark~\ref{rem:prob_sym} below). For these dimensions, the spectrum is of the form
\[
    \begin{cases}
    \sigma_{\rm ess}( \Dirac_m - V \beta) &  = (- \infty, -m] \cup [m, + \infty) , \\
    \sigma_{\rm disc} (\Dirac_m - V \beta) & = \{ ± \lambda_1, ± \lambda_2, \cdots \}, \quad 0 \le \lambda_1 \le \lambda_2 \le \cdots < m,
    \end{cases}
\]
where the sequence $(\lambda_n)$ is either finite or accumulates at $m$. In particular, there is no {\em lowest} eigenvalue in the gap $(-m, m)$. Our goal is rather to optimize the {\bf eigenvalue closest to zero} of $\Dirac_m - V \beta$, which also corresponds to the lowest {\em positive} eigenvalue in dimensions $d ∈ \{ 1, 2, 3 \}$. We define
\begin{equation} \label{eq:def:lambda1}
    \lambda_1(V) := \dist\bigl(0, \sigma(\Dirac_m - V\beta)\bigr) = \inf \sigma \bigl( | \Dirac_m - V \beta | \bigr) \quad \in [0, m],
\end{equation}
and study the Keller problem
\begin{equation} \label{eq:def:Lambda_intro}
   \boxed{ \Lambda(\alpha) := \inf\Big\{\lambda_1(V), \quad V \in L^p(\R^d, \R), \quad \nrm{V}{L^p} \le \alpha \Big\}.}
\end{equation}

Since we optimize under the inequality $\nrm{V}{L^p} \le \alpha$, the map $\alpha \mapsto \Lambda(\alpha)$ is nonincreasing. Throughout the paper, we fix $d \ge 1$ and $p > d$, and we allow $V$ to change sign. The eigenvalue $\lambda_1(V)$ vanishes if and only if $\Dirac_m - V\beta$ has a {\em zero mode}, that is a nontrivial solution of $(\Dirac_m - V\beta)\psi = 0$. This leads us to introduce the \emph{critical mass}
\begin{equation} \label{eq:def:alpha_star_intro}
    \alpha_*(p,d) := \inf \bigl\{ \alpha > 0, \quad \Lambda(\alpha) = 0 \bigr\} \quad \in [0, +\infty].
\end{equation}
In the case where $\Lambda(\alpha) > 0$ for all $\alpha$, we set $\alpha_*(p,d) = + \infty$. Our first main result is the following.

\begin{theorem}[Existence of optimal potentials] \label{thm:existence}
    Let $d \ge 1$ and $p > d$. The map $\alpha \mapsto \Lambda(\alpha)$ is continuous and nonincreasing on $(0, \infty)$, with
    \[
        \lim_{\alpha \to 0} \Lambda(\alpha) = m \qquad \text{and} \qquad \lim_{\alpha \to \alpha_*} \Lambda(\alpha) = 0.
    \]
    This map is positive and strictly decreasing on $(0, \alpha_*(p,d))$. For all $\alpha > 0$, there is an optimal potential $V \in L^p(\R^d, \R)$ with $\nrm{V}{L^p} \le \alpha$ such that $\lambda_1(V) = \Lambda(\alpha)$. \\
    In addition, in the case $\alpha < \alpha_*(p,d)$, any optimal potential is bounded and satisfies $\nrm{V}{L^p} = \alpha$, and any associated normalized eigenspinor $\psi$ satisfies, for some $\mu = \mu(\psi) > 0$,
    \begin{equation} \label{eq:EL_intro}
        (\Dirac_m - V \beta) \psi = \Lambda(\alpha) \psi \qquad \text{and} \qquad \mu \, | V |^{p-2} V = \sprod{\psi}{\beta \psi}.
    \end{equation}
\end{theorem}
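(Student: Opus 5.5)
The plan is to reduce everything to the squared operator and a min-max characterization. Since $\lambda_1(V)^2 = \inf\sigma\bigl((\Dirac_m - V\beta)^2\bigr)$, and formally
\[
(\Dirac_m - V\beta)^2 = -\Delta + (m-V)^2 + \ri (\balpha\cdot\nabla V)\beta ,
\]
we cannot argue as for Schrödinger. Instead we work with the quadratic form $\psi \mapsto \|(\Dirac_m - V\beta)\psi\|_{L^2}^2$ on $H^1$. For $p>d$, the map $V\beta: H^1 \to L^2$ is relatively compact, since $\|V\psi\|_{L^2} \le \|V\|_{L^p}\|\psi\|_{L^{2p/(p-2)}}$ and Sobolev applies. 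Hence $\sigma_{\rm ess} = (-\infty,-m]\cup[m,\infty)$, and
\[
\lambda_1(V) = \min\Bigl(m,\ \inf_{\|\psi\|=1}\|(\Dirac_m - V\beta)\psi\|\Bigr).
\]
This gives the following properties.

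\textbf{Elementary properties.} Monotonicity of $\Lambda$ is immediate. The estimate $\|(\Dirac_m-V\beta)\psi\| \ge \|\Dirac_m\psi\| - C\|V\|_{L^p}\|\psi\|_{H^1}^{\theta}\|\psi\|^{1-\theta}$ with $\theta=d/p<1$ gives $\Lambda(\alpha)\to m$ as $\alpha\to0$. To see $\Lambda(\alpha)<m$ for every $\alpha>0$, we use a trial spinor: take a constant $\beta=+1$ spinor spread at scale $R$, and a potential $V$ of the same profile, so that $\langle\psi,\beta V\psi\rangle>0$ gives a gain of order $R^{-d/p}$ against a kinetic loss of order $R^{-2}$. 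Continuity of $\Lambda$ follows from scaling: $\lambda_1(tV)$ and $\lambda_1(V)$ are close uniformly on bounded sets, because $|\lambda_1(V)-\lambda_1(W)| \le \|(V-W)\beta\|_{H^1\to L^2}$ applied to approximate eigenfunctions with bounded $H^1$ norm.

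\textbf{Existence (main step).} Take a minimizing sequence $V_n$, weakly converging in $L^p$, with normalized eigenspinors $\psi_n$ that are bounded in $H^1$ by the eigenvalue equation, and eigenvalues $\lambda_n \to \Lambda(\alpha)$. If $\Lambda(\alpha)=m$ is impossible (shown above), we apply concentration-compactness to $|\psi_n|^2$. Vanishing gives $\|V_n\psi_n\|\to0$, hence $\lambda_n\to$ something $\ge m$, a contradiction. Dichotomy is excluded by a strict subadditivity or binding argument. Localizing the $\psi_n$ into two pieces would produce, for one piece, eigenvalue $\le\lambda$ with a potential of mass $\alpha_1^p<\alpha^p$, contradicting strict monotonicity of $\Lambda$ (proved first by rescaling $V\mapsto tV$ with $t>1$). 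Translating, we obtain tightness, so $\psi_n\to\psi$ strongly in $L^2$ and $V_n\psi_n\rightharpoonup V\psi$. Passing to the limit in $(\Dirac_m-V_n\beta)\psi_n=\lambda_n\psi_n$ gives $(\Dirac_m - V\beta)\psi = \pm\Lambda\psi$ with $\|V\|_{L^p}\le\alpha$. For $\alpha\ge\alpha_*$, we use monotonicity and the existence at $\alpha_*$ (or a zero-mode limit). I expect the dichotomy exclusion to be the main obstacle, since the eigenvalue is not a minimum of a bounded-below energy. I would handle it through the squared form $\|(\Dirac_m - V\beta)\psi\|^2$ with IMS-type localization, whose error is $O(R^{-2})$.

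\textbf{Euler--Lagrange equation.} For $\alpha<\alpha_*$, $\Lambda>0$. Perturbation theory, via Feynman--Hellmann on the eigenspace, gives the derivative of the eigenvalue in the direction $W$ as $-\langle\psi,\beta W\psi\rangle$ for suitable eigenvectors. More robustly, we use the upper bound $\lambda_1(V+\epsilon W)^2 \le \|(\Dirac_m - (V+\epsilon W)\beta)\psi\|^2$ and the fact that $\lambda_1(V)^2$ is a min. We obtain that $\psi\psi^*$-weighted $\sprod{\psi}{\beta\psi}$ must be a Lagrange multiple of $|V|^{p-2}V$; otherwise some admissible $W$ decreases $\lambda_1$. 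This uses $(\Dirac_m - V\beta)\psi=\Lambda\psi$, so the first variation is $-2\Lambda\langle\psi,\beta W\psi\rangle$ with $\Lambda>0$. Then $\mu>0$, and if $\|V\|_{L^p}<\alpha$ all variations are admissible, forcing $\sprod{\psi}{\beta\psi}\equiv0$, which contradicts strict decrease. Boundedness of $V$ follows from $|V|^{p-1}=|\sprod{\psi}{\beta\psi}|/\mu$ and a bootstrap: $\psi\in H^1$ gives $V\in L^{q}$ with larger $q$, hence $\psi\in W^{1,q}$ by elliptic regularity of $\Dirac_m$, iterating to $\psi\in L^\infty$ because $p>d$.
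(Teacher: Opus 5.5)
\textbf{There is a genuine gap.} It lies in how you obtain strict monotonicity, saturation and $\mu>0$. You use strict decrease of $\Lambda$ as an input twice: to exclude dichotomy, and to deduce $\nrm{V}{L^p}=\alpha$ from the first-order condition. You propose to prove it beforehand by the amplitude rescaling $V\mapsto tV$, $t>1$, but this does not work. Along $t\mapsto tV$ the derivative of the eigenvalue is $-\int V\sprod{\psi}{\beta\psi}$. This has no a priori sign for a general, possibly sign-changing, $V$ and its eigenspinor, so nothing forces $\lambda_1(tV)<\lambda_1(V)$, let alone uniformly along a minimizing sequence.

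Without strict decrease, your first-order argument only gives the alternative ``either $\sprod{\psi}{\beta\psi}\equiv 0$, or $\nrm{V}{L^p}=\alpha$ and $\mu>0$''. Your sentence ``Then $\mu>0$'' is precisely what must be proved. The missing idea is the virial identity coming from the norm-preserving dilations $V_s(x)=s^{d/p}V(sx)$. Testing $\lambda_1(V_s)\ge\Lambda(\alpha)$ with $s^{d/2}\psi(s\,\cdot)$ and expanding at $s=1$ (using $\Lambda(\alpha)>0$) gives $\langle\psi,\Dirac_0\psi\rangle=\frac dp\int V\sprod{\psi}{\beta\psi}$. If $\sprod{\psi}{\beta\psi}\equiv0$, then $\langle\psi,\Dirac_0\psi\rangle=0$ and $\Lambda(\alpha)=\langle\psi,(\Dirac_m-V\beta)\psi\rangle=0$, a contradiction. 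This yields saturation and $\mu>0$. Strict decrease then comes \emph{afterwards}: if $\Lambda(\alpha)=\Lambda(\alpha')$ with $\alpha<\alpha'<\alpha_*$, an optimizer for $\alpha$ would be a non-saturating optimizer for $\alpha'$.

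The existence step can be repaired at no cost, because the dichotomy case you single out as the main obstacle never needs to be excluded. Once vanishing is ruled out, translate so that $\int_{B_1}|\psi_n|^2\ge c$. Local compactness makes the weak limit $\psi$ nonzero. Since the eigenvalue equation is linear in $\psi$, it passes to the limit in the sense of distributions ($V_n\rightharpoonup V$ weakly in $L^p$, tested against $\psi_n\to\psi$ strongly in $L^{p'}_{\rm loc}$). A nonzero $L^2$ solution is an eigenfunction, so $\lambda_1(V)\le\Lambda(\alpha)$, and $\nrm{V}{L^p}\le\alpha$ by weak lower semicontinuity. No tightness is required, and this works for every $\alpha>0$, including $\alpha\ge\alpha_*$.

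Two smaller points:
\begin{itemize}
\item All your $H^1$-based estimates need $p\ge2$: domain $H^1$, $\|V\psi\|_{L^2}\le\nrm{V}{L^p}\|\psi\|_{L^{2p/(p-2)}}$, and Lions' lemma. So $d=1$, $1<p<2$ requires a separate argument, e.g.\ a Gr\"onwall bound on the ODE.
\item You never show $\Lambda(\alpha)\to0$ as $\alpha\to\infty$. This is needed for $\lim_{\alpha\to\alpha_*}\Lambda(\alpha)=0$ when $\alpha_*=\infty$; take $V=m\1_{B_R}$ with $R\to\infty$.
\end{itemize}
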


We display in Figure~\ref{fig:Lambda_alpha} some plots of $\Lambda(\alpha)$ for $d = 1$ and $d = 2$ and for different values of $p$.

\begin{figure}[ht]
\centering
\includegraphics[width=\linewidth]{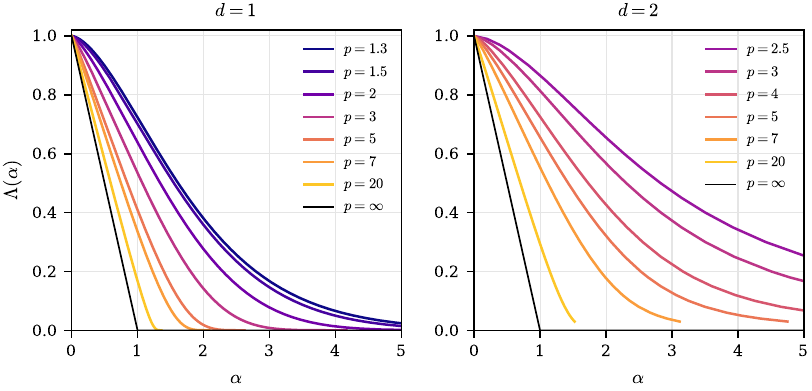}
\caption{The function $\alpha \mapsto \Lambda(\alpha)$ for different values of $p$. Left: $d = 1$ (explicit formulas, see Appendix~\ref{sec:appendix:explicit_1D}). Right: Same for $d = 2$, see Appendix~\ref{sec:appendix:numerics} for details on the numerics. In both cases, as $p \to \infty$, the curves approach the line $\Lambda = (1 - \alpha)_+$.}
\label{fig:Lambda_alpha}
\end{figure}

The function $\phi := \mu^{-1/2}\psi$ solves the nonlinear Soler equation
\begin{equation} \label{eq:Soler}
    \boxed{ \Dirac_m \phi - f\big( \sprod{\phi}{\beta \phi} \big) \beta \phi = \Lambda(\alpha) \phi \qquad \text{with} \qquad f(s) := |s|^{\frac{2-p}{p-1}} s.}
\end{equation}
When the optimal potential is nonnegative (this holds in dimension $d = 1$, see Appendix~\ref{sec:appendix:explicit_1D}), then $f(s) = s^{1/(p-1)}$ is the usual power-type Soler nonlinearity. 

\medskip

Solitary waves of the Soler model have been extensively studied, both analytically and numerically~\cite{CueBouCom-18, AldRicSto-23}. In dimension $d = 3$, existence of solutions is proved in~\cite{CazVaz-86}, using a shooting method in the Wakano ansatz~\cite{Wak-66}. This result was extended to more general nonlinearities in~\cite{Mer-88}, and the existence of infinitely many solutions (excited states) was proved in~\cite{BalCazDou-88a, BalCazDou-88}. In~\cite{EstSer-95}, solutions are obtained by linking arguments. Our method gives a proof of existence based on a direct variational argument: we minimize an eigenvalue. We prove existence of solutions for all frequencies $\Lambda \in (0, m)$, and without any symmetry assumption. This approach is novel to the best of our knowledge.

\medskip

We provide some numerical simulations in Appendix~\ref{sec:appendix:numerics} for $d = 2$. In view of the results, we observe that the optimal potentials we find are not necessarily symmetric, so our solutions differ from the ones found in the previous works, which assumed the Wakano ansatz (which implies that $\langle \psi, β ψ \rangle_{\C^N}$ is radial). We conjecture that, should one optimize among radially symmetric potentials only, we would probably recover the {\em ground state} solutions of the previous works.

\medskip

Our second main result concerns the critical mass $\alpha_*(p,d)$. As we prove below, if $\alpha_*(p,d) < \infty$, then $\alpha_*(p, d)$ is attained: it is the smallest $L^p$ norm of a potential creating a zero mode. The following result is proved in Section~\ref{sec:no_zero_modes}.
\begin{theorem}[Absence of zero modes in dimension $d = 1$ and $d = 2$] 
    \label{thm:critical_alpha_dim12} ~
    If $d=1$ or $d=2$, then for all $p > d$, we have $\alpha_*(p, d) = + \infty$. In other words, for all potential $V ∈ L^p(\R^d, \R)$ we have $0 \notin \sigma(\Dirac_m - V \beta)$.
\end{theorem}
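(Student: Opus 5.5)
We argue by contradiction. Let $V \in L^p(\R^d,\R)$ with $p>d$, and suppose $\psi \in H^1$ is a nontrivial solution of $(\Dirac_m - V\beta)\psi = 0$. Set $M := m - V$. The approach is to rewrite this equation as a first-order system in a convenient representation of the Dirac matrices, and show that the mass $m>0$ forces some component of $\psi$ to grow exponentially in some direction, contradicting $\psi \in L^2$. The case $d=1$ is completely explicit. The case $d=2$ reduces to a scalar Carleman--Bers--Vekua equation.

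\emph{Dimension $d=1$.} Take $\alpha = \sigma_1$ and $\beta = \sigma_3$. The equation $-\ri\sigma_1\psi' + M\sigma_3\psi = 0$ becomes $\psi' = -M\sigma_2\psi$. In the eigenbasis of $\sigma_2$ it decouples into $u_\pm' = \mp M u_\pm$, so that
\[
    u_\pm(x) = c_\pm \exp\Bigl(\mp\bigl(mx - \textstyle\int_0^x V\bigr)\Bigr).
\]
By H\"older, $|\int_0^x V| \le |x|^{1-1/p}\nrm{V}{L^p} = o(|x|)$. Hence $|u_+(x)| \ge |c_+| e^{m|x|/2}$ for $x \to -\infty$, and $|u_-(x)| \ge |c_-| e^{m|x|/2}$ for $x \to +\infty$. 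Square integrability therefore forces $c_\pm = 0$, so $\psi = 0$. Only $p>1$ is used here, and only to get the $o(|x|)$ bound.

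\emph{Dimension $d=2$.} Take $\alpha_1 = \sigma_1$, $\alpha_2 = \sigma_2$, $\beta = \sigma_3$, and write $\psi = (f,g)$. The equation becomes
\[
    -2\ri\,\partial_z g + M f = 0, \qquad -2\ri\,\partial_{\bar z} f - M g = 0.
\]
The antilinear symmetry $(f,g)\mapsto(-\ri\bar g, \ri \bar f)$ from Proposition~\ref{prop:symmetries} preserves the kernel. Splitting a kernel element into its two ``real'' parts with respect to this symmetry, we may assume $g = \ri\bar f$ up to a unimodular constant. Both equations then collapse to the single Vekua equation
\[
    \partial_{\bar z} f = -\tfrac{M}{2}\,\bar f .
\]
The plan is to absorb the constant mass explicitly and treat $V$ perturbatively. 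Set $b := -\tfrac{M}{2}\,\bar f/f$ where $f\neq 0$ and $b:=0$ elsewhere, so that $|b| \le \tfrac12(m+|V|)$. Then $f$ solves the linear equation $\partial_{\bar z} f = b f$. The similarity principle gives $f = e^{w}h$ with $h$ entire and $w$ a regularized Cauchy transform of $b$. The $V$-part of $b$ lies in $L^p$ with $p>2$, so its Cauchy transform is H\"older and grows at most like $|z|^{1-2/p} = o(|z|)$, just as in $d=1$. The difficulty is the part of $b$ coming from the constant $m$, whose Cauchy transform is not defined globally.

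What I would try first is a Liouville-type growth argument. The aim is to show that $f\in L^2$, together with sub-linear-exponential control of the perturbation, forces $h$ to vanish. Concretely, I would like to compare with the free equation $\partial_{\bar z}f = -\tfrac m2\bar f$, whose nonzero solutions are combinations of modified Bessel-type functions that necessarily grow like $e^{m|z|}$ in some direction. I would also bring in the integral identities obtained by integrating $\partial_{\bar z}(z^k f^2) = -z^k M|f|^2$. For instance $k=0$ gives $\int M|f|^2 = 0$, that is $m\int|f|^2 = \int V|f|^2$, and the higher-$k$ identities give further constraints of the same kind.

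I expect the main obstacle to be exactly this step: handling the non-decaying mass term inside the Vekua equation in $d=2$. Unlike in $d=1$, there is no explicit integrating factor, and the phase of $\bar f/f$ is not controlled. This is where $p>2$ must enter in an essential way, through H\"older continuity of the Cauchy transform of $V$.
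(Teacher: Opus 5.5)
Your $d=1$ argument is correct and is the paper's. In $d=2$, however, there is a genuine gap: after reducing to the Vekua equation you only list strategies, and none of them closes the argument.

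A small slip comes first. The map $(f,g)\mapsto(-\ri\bar g,\ri\bar f)$ is $\sigma_2\cK$. It squares to $-1$, so it has no fixed points, and it conjugates $H_V$ to $\Dirac_0-M\beta$, so it does not preserve the kernel. The right map is $(f,g)\mapsto(\ri\bar g,\ri\bar f)$, i.e.\ $\ri$ times the charge conjugation $\sigma_1\cK$ of Proposition~\ref{prop:symmetries}. Its fixed points are exactly $g=\ri\bar f$, so your equation $\partial_{\bar z}f=-\frac M2\bar f$ is nevertheless correct. Your variable is essentially the paper's $u=\psi_2+\ri\overline{\psi_1}$.

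The similarity-principle route stalls where you say it does. The part of $b=-\frac M2\bar f/f$ coming from $m$ is bounded but has uncontrolled phase, so its (twice regularized) Cauchy transform $w$ is only $O(|z|\log|z|)$. The factors $e^{\pm w}$ then swamp whatever $f\in L^2$ tells you about the entire function $h$. The moment identities do not help either. $\int z^kM|f|^2=0$ only constrains holomorphic moments of a real density, and any radial density with zero total mass satisfies all of them. For $k\ge1$ they also need decay of $f$ that you have not established.

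The missing idea is to use the identity $\partial_{\bar z}(f^2)=-M|f|^2$, which you already wrote, pointwise rather than integrated: its right-hand side is \emph{real}. Write $f^2=a+\ri b$.
\begin{itemize}
\item The imaginary part gives $\partial_2a+\partial_1b=0$. Since $f$ is continuous by Lemma~\ref{lem:regularity_eigenfunctions}, $(a,-b)=\nabla\Theta$ for a real $C^1$ function $\Theta$.
\item The real part becomes the scalar equation $\Delta\Theta=-2M|\nabla\Theta|$, with $|\nabla\Theta|=|f|^2\in L^1\cap L^\infty$.
\end{itemize}
In this form the constant mass is harmless. It is just part of a drift coefficient that only needs to be in $L^p$ on bounded sets, with no sign condition.

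The paper then shows $\Theta$ is constant in two steps.
\begin{itemize}
\item A two-dimensional Sobolev--Poincar\'e argument, using $\nabla\Theta\in L^1$ and $\Theta$ Lipschitz, gives $\Theta\to\Theta_\infty$ at infinity. Normalize $\Theta_\infty=0$.
\item A Stampacchia truncation with $(\Theta-\ell)_+$, $0<\ell<\sup\Theta$, rules out $\sup\Theta>0$, and symmetrically $\inf\Theta<0$. Set $S_\ell=\{\Theta>\ell\}$, which is bounded, and $T_\ell=S_\ell\cap\{\nabla\Theta\neq0\}$. H\"older with $q=2p/(p-2)$ and Gagliardo--Nirenberg give $1\le C\|M\|_{L^p(T_\ell)}|S_\ell|^{1/q}$. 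But $|T_\ell|\to0$ as $\ell\uparrow\sup\Theta$, since $\nabla\Theta=0$ wherever the maximum is attained.
\end{itemize}
Hence $f^2\equiv0$ and $f=0$. This is where $p>2$ actually enters, rather than through H\"older continuity of a Cauchy transform.
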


In dimension $d = 3$, we rather conjecture the following
\begin{conjecture}[Existence of zero mode in dimension $d = 3$]
    \label{thm:critical_alpha_dim3}
    For all $p > 3$, we have $\alpha_*(p, 3) < +\infty$: there are potentials $V \in L^p(\R^3)$ creating zero modes, $0 \in \sigma(\Dirac_m - V\beta)$.
\end{conjecture}

ChatGPT Astra seems to have found a way to construct such a 3-d potential supporting zero modes. But since we do not yet fully understand its proof, and since we wanted to keep the present article free of AI generated proofs, we prefer to name this result a {\em conjecture}, and postpone the construction of such potentials for a future manuscript. 

\medskip

Conjecture~\ref{thm:critical_alpha_dim3} is quite surprising, in particular in view of the following result.
\begin{proposition} \label{prop:radialV}
    For all $d \ge 1$ and all $p > d$, if $V \in L^p(\R^d, \R)$ is {\bf radial}, then $0 \notin \sigma(\Dirac_m - V\beta)$.
\end{proposition}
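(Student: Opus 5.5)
Since $V \in L^p$ with $p > d$, the perturbation $V\beta$ is relatively compact with respect to $\Dirac_m$. Hence $\sigma_{\rm ess}(\Dirac_m - V\beta) \cap (-m,m) = \emptyset$, and $0 \in \sigma(\Dirac_m - V\beta)$ can only happen if $0$ is an eigenvalue. The plan is therefore to assume there is an $L^2$ zero mode, i.e.\ some $\psi \in H^1(\R^d,\C^N)$ with $\bigl(-\ri\balpha\cdot\nabla + M(|x|)\beta\bigr)\psi = 0$, where $M := m - V$, and to derive a contradiction using a conserved-type quantity of the radial ODE.

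\textbf{Step 1: partial-wave reduction.} For a radial scalar mass $M(r)$, the Dirac operator commutes with the generalized spin-orbit operator. It therefore decomposes, as in the classical case $d=3$ and in its analogue in general dimension, into an orthogonal sum of $2\times 2$ radial systems. Each of these acts on pairs $(f,g)$ of functions of $r \in (0,\infty)$, after extracting the factor $r^{(d-1)/2}$, so that $\int (|f|^2+|g|^2)\,\rd r < \infty$. In each channel the zero-mode equation reads
\[
    f' = -\frac{\kappa}{r} f + M g, \qquad g' = \frac{\kappa}{r} g + M f,
\]
with $\kappa \neq 0$ for $d \ge 2$, since $\kappa \in \pm\bigl(\tfrac{d-1}{2} + \mathbb{N}\bigr)$. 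The signs may need adjusting to the chosen representation, but the structure stays the same. The functions $f,g$ can be taken real, because $M$ is real.

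\textbf{Step 2: monotone quantity.} The key identity is
\[
    \frac{\rd}{\rd r}\bigl(f^2 - g^2\bigr) = -\frac{2\kappa}{r}\bigl(f^2 + g^2\bigr).
\]
The $M$-terms cancel exactly, which is why the sign of $V$ plays no role. Take $\kappa > 0$; the case $\kappa < 0$ is symmetric. Then $f^2 - g^2$ is strictly decreasing unless $f \equiv g \equiv 0$. The remaining task is to show that $f^2 - g^2 \to 0$ both as $r \to 0$ and as $r \to \infty$. A strictly monotone function with equal limits at both ends is impossible, which gives the contradiction.

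\textbf{Step 3: boundary behaviour (the main obstacle).} This is where the low regularity of $V$ matters.

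At infinity, $(f,g) \in L^2$. Moreover, the system gives $(f',g') \in L^2 + L^p \cdot L^\infty_{\rm loc}$ away from $0$, and $M \in L^p + L^\infty$. From this, $f,g$ are absolutely continuous and $f^2 - g^2$ has a limit at infinity by monotonicity. That limit must be $0$, since $f^2+g^2$ is integrable.

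At the origin, I would write the system in integrated form,
\[
    \bigl(r^{\kappa} f\bigr)' = r^{\kappa} M g, \qquad \bigl(r^{-\kappa} g\bigr)' = r^{-\kappa} M f.
\]
A Gronwall/Volterra argument near $0$ then uses $M \in L^p$ with $p > d \ge 1$, so that $\int_0^r |M| \lesssim r^{1-1/p}$ locally. This shows that any solution is a combination of a regular branch, with $g \sim c\,r^{\kappa}$ and $f = o(r^{\kappa})$, and a singular branch, with $f \sim c\,r^{-\kappa}$. The singular branch is excluded because $\psi \in H^1$ (equivalently, $\psi$ lies in the operator domain); this uses $\kappa \ge \tfrac12$. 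On the regular branch, $f,g \to 0$, so $f^2 - g^2 \to 0$ at the origin.

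\textbf{Step 4: the case $d = 1$.} Here ``radial'' means that $V$ is even, and the channel with $\kappa = 0$ is really the equation on the whole line: $f' = Mg$, $g' = Mf$. Then $f^2 - g^2$ is constant, and it must vanish by decay, so $f = \pm g$. Hence $f = c\exp\bigl(\pm\int_0^x M\bigr)$. Since $\bigl|\int_0^x V\bigr| \lesssim |x|^{1-1/p}$ is sublinear, the exponent behaves like $\pm m x$, so $f$ grows at one of the ends $\pm\infty$ unless $c = 0$. This case is consistent with, and in fact subsumed by, Theorem~\ref{thm:critical_alpha_dim12}.
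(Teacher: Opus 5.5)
Your proposal is correct and follows essentially the same route as the paper: both reduce via the spin--orbit operator to systems coupling the $\pm\kappa$ channels, both show that the $M$-terms cancel in the derivative of $F-G$ so that it is monotone, and both conclude from its limits at $0$ and $\infty$ (the paper works with the vector-valued projections, $F=\|u_+\|^2$ and $G=\|u_-\|^2$, rather than scalar $2\times 2$ channels). The only real difference is at the origin, where the paper avoids your Volterra/branch analysis by using the a priori bound $\psi\in L^\infty$ from Lemma~\ref{lem:regularity_eigenfunctions}, which gives $|W(r)|\le C r^{d-1}$; one small correction there is that for radial $V\in L^p(\R^d)$ the measure is $r^{d-1}\,\rd r$, so the local bound is $\int_0^r|V|\,\rd s\lesssim r^{1-d/p}$ rather than $r^{1-1/p}$, which is still integrable precisely because $p>d$.
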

In other words, a radial potential cannot create zero modes. If Conjecture~\ref{thm:critical_alpha_dim3} holds, then any optimal potential at $\alpha = \alpha_*(p,3)$, which creates a zero mode, is non-radial. To our knowledge, this would be the first example of a natural operator for which the optimal potential for the first eigenvalue is not radial. Our numerical simulations (Appendix~\ref{sec:appendix:numerics}) indicate that symmetry breaking could also occur in dimension $d = 2$, although there are no zero modes in this case. This is in stark contrast with the Faber--Krahn inequality in bounded domains for Dirac operators~\cite{Dur-26, DurMasSan-26, AmmQiu-26}, where the two-dimensional disk is the optimal shape for the lowest eigenvalue, at least for some natural boundary conditions (except in dimension $d = 1$, where all computations are explicit).
\medskip

Let us emphasize that the proof of Theorem~\ref{thm:existence} differs strongly from the one of~\cite{DolGonPiz-23}, although some key computations are similar. The key ingredient in~\cite{DolGonPiz-23} is a Birman--Schwinger argument: we fix $\lambda \in (-m, m)$ and recast the Keller problem as an optimization of the highest eigenvalue of the corresponding Birman--Schwinger operator, namely
\[
    \sup \left\{ \mu_1 \left(\sqrt{V} (\Dirac_m  - \lambda)^{-1} \sqrt{V} \right) , \qquad V \in L^p(\R^d, \R^+), \ \| V \|_{L^p} = \alpha \right\}.
\]
In our case, the corresponding Birman--Schwinger operator involves the operator
\begin{equation} \label{eq:def:KV}
    K_V(z) := |V|^{1/2}  (\Dirac_m - z)^{-1}  |V|^{1/2}  \sign(V) \beta,
\end{equation}
which is compact, but not self-adjoint because of the $\beta$ matrix. We cannot define its {\em highest} eigenvalue as we did in~\cite{DolGonPiz-23}. Our strategy is therefore to work directly with the eigenvalue $\lambda_1(V)$, using the characterization
\[
    \lambda_1(V) = \inf \left\{  \| (\Dirac_m - V\beta)ψ \|_{L^2}, \quad \| ψ \|_{L^2} = 1 \right\}.
\]
This formulation allows to adapt standard concentration-compactness results~\cite{Lio-84}.

\section{The Dirac operator with Lorentz scalar potentials}
\label{sec:prelim}

\subsection{First properties of the Dirac operator}
Let us first recall some basic properties of the Dirac operator (see also~\cite{Tha-92}). Let $d\ge 1$ and set $N :=2^{\lfloor(d+1)/2\rfloor}$. We consider $\alpha_1, \cdots, \alpha_d$ and~$\alpha_0 := \beta$, which are $N \times N$ Hermitian matrices satisfying the anti-commutation rules (Clifford relations)
\[
\forall j,k \in \{ 0, \cdots, d \}, \qquad    \alpha_j \alpha_k+\alpha_k \alpha_j=2 \delta_{jk} \bbI_N,
\]
where $\delta_{jk}$ denotes the Kronecker symbol and $\bbI_N$ is the $N\times N$ identity matrix. One way to construct these matrices is by induction, as follows. We denote by $\sigma_1, \sigma_2, \sigma_3$ the usual Pauli matrices. For $d = 1$ and $N = 2$, we set $\alpha_1 := \sigma_1$ and $\beta := \sigma_3$. For $d = 2$ and $N = 2$, we set $\alpha_1 := \sigma_1$, $\alpha_2 := \sigma_2$ and $\beta := \sigma_3$. Then, to go from spatial dimension $d$ to $d+2$, we set
\[
        \forall j \in \{ 0, \cdots, d\}, \quad \alpha_j^{(d+2)} := \alpha_j^{(d)} \otimes \sigma_3, \quad
        \alpha_{d+1}^{(d+2)} := \bbI_N \otimes \sigma_1, \quad 
        \alpha_{d+2}^{(d+2)} := \bbI_N \otimes \sigma_2
\]
(in particular, $\beta^{(d+2)} = \beta^{(d)} \otimes \sigma_3$). This gives $d+3$ matrices of size $2N \times 2N$ satisfying the Clifford relations. 

\medskip

\subsubsection{Self-adjointness.} The \emph{free Dirac operator} in dimension~$d$ is the operator acting on $L^2(\R^d, \C^N)$ defined by (we choose units in which $\hbar=c=1$)
\[
    \Dirac_m:=\sum_{j=1}^d\alpha_j (- \ri \partial_j)+m \beta =: \balpha \cdot (- \ri \nabla)+m \beta.
\]
It is self-adjoint with domain $H^1(\R^d, \C^N)$, and satisfies $\Dirac_m^2=-\Delta+m^2$. Its spectrum is purely essential, and given by
\[
    \sigma(\Dirac_m)=\sigma_{\rm ess}(\Dirac_m)=(-\infty,-m]\cup[m,+\infty).
\]
In what follows, we perturb the constant mass $m$ by a {\em Lorentz scalar} potential $V \in L^p(\R^d, \R)$, and consider the Dirac operator
\[
    H_V := \Dirac_m - V \beta = \balpha \cdot (- \ri \nabla)+(m - V) \beta.
\]

\begin{proposition}[Self-adjointness] 
    \label{prop:selfadjoint}
    Let $d \ge 1$, $p > d$ and $V \in L^p(\R^d, \R)$. The operator $\Dirac_m - V\beta$ has a distinguished self-adjoint extension, with domain
    \[
        \Dom(H_V) = \Big\{ \psi \in L^2(\R^d, \C^N), \quad |V|^{1/2} \psi \in L^2 \quad \text{and} \quad (\Dirac_m - V\beta) \psi \in L^2 \Big\}.
    \]
    We always have $\Dom(H_V) \subset H^{1/2}(\R^d, \C^N)$. If $p \ge 2$, it holds $\Dom(H_V) = H^1(\R^d, \C^N)$ and 
    \[
        \sigma_{\rm ess} \left( \Dirac_m - V \beta \right) = \sigma_{\rm ess} \left( \Dirac_m \right) = (-\infty,-m]\cup[m,+\infty).
    \]
    The Birman--Schwinger principle holds: $\lambda \in (-m, m)$ is an eigenvalue of $\Dirac_m - V\beta$ if and only if $1$ is an eigenvalue of the Birman--Schwinger operator $K_V(\lambda)$.
\end{proposition}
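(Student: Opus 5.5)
The plan is to build $H_V$ as a form-bounded perturbation of $\Dirac_m$ and use the Nenciu / Kato–Birman–Schwinger framework for relatively form-compact perturbations of Dirac operators. Write $V\beta = B^* A$ with
\[
A := |V|^{1/2}, \qquad B := |V|^{1/2}\sign(V)\beta .
\]
Since $\beta$ commutes with scalar functions, $B^*A = V\beta$.

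The first key estimate is that $A|\Dirac_m - z|^{-1/2}$ is compact for $z \in \ri\R\setminus\{0\}$ (or $z \in (-m,m)$). Indeed $A \in L^{2p}$, and by Kato–Seiler–Simon (or Hölder plus Sobolev) $|V|^{1/2}(1-\Delta)^{-1/4}$ is bounded, and in fact compact, on $L^2$ as soon as $2p > 2d$, i.e. $p>d$. The same holds for $B$. Consequently the Birman–Schwinger operator $K_V(z)$ from~\eqref{eq:def:KV} is compact, and its norm tends to $0$ as $|\Im z|\to\infty$ (split $V$ into a bounded part and a small $L^p$ part).

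Next, choose $z = \ri\eta$ with $\eta$ large, so that $\|K_V(\ri\eta)\|<1$. Define the resolvent through the Konno–Kuroda formula
\[
R(z) := (\Dirac_m - z)^{-1} + (\Dirac_m-z)^{-1}A\,\bigl(1-K_V(z)\bigr)^{-1} B(\Dirac_m-z)^{-1},
\]
with the appropriate ordering of $A$ and $B$ adapted to $K_V$. One then checks that $R(z)$ is a pseudo-resolvent with $R(z)^* = R(\bar z)$ and trivial kernel, so by Kato's theorem it is the resolvent of a unique self-adjoint operator $H_V$. The resulting operator coincides with $\Dirac_m - V\beta$ in the sense of forms on $H^{1/2}$, which identifies the domain stated in the proposition.

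From the formula, $\Ran R(z) \subset \Ran |\Dirac_m|^{-1/2} = H^{1/2}$, because $A(\Dirac_m-z)^{-1}$ maps into $L^2$ after the factor $|\Dirac_m-z|^{-1/2}$ is absorbed. Hence $\Dom(H_V)\subset H^{1/2}$. Uniqueness ("distinguished") follows from the standard Nenciu argument: this is the only self-adjoint extension whose domain lies in the form domain $H^{1/2}$.

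For $p\ge 2$ the argument is more direct. Since $V\in L^p$ with $p\ge2$ and $p>d$, Sobolev and Hölder give that $V\beta$ is $\Dirac_m$-bounded with relative bound $0$. Explicitly, write $V = V\mathbf{1}_{|V|>M} + V\mathbf{1}_{|V|\le M}$. The bounded part is harmless, and the other part is small in $L^p$ and maps $H^1 \to L^2$ via $\|V_1 f\|_2 \le \|V_1\|_p\|f\|_{2p/(p-2)}$, with $H^1\subset L^{2p/(p-2)}$ when $p>d$ (and $p \geq 2$). Kato–Rellich then gives $\Dom(H_V)=H^1$. Moreover $V\beta(\Dirac_m - \ri)^{-1}$ is compact, again by approximating $V$ by bounded compactly supported functions and using the Rellich lemma. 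Weyl's theorem therefore yields $\sigma_{\rm ess}(H_V) = \sigma_{\rm ess}(\Dirac_m)$.

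Finally, the Birman–Schwinger principle. If $H_V\psi=\lambda\psi$ with $\lambda\in(-m,m)$, set $\varphi := A\psi\in L^2$; this is legitimate because $\psi\in H^{1/2}$ and $A(\Dirac_m-\lambda)^{-1/2}$-type operators are bounded. From $(\Dirac_m-\lambda)\psi = V\beta\psi$ we get
\[
\psi = (\Dirac_m-\lambda)^{-1}A\,\sign(V)\beta A\psi,
\]
hence $\varphi = K_V(\lambda)\varphi$ with $\varphi\neq0$, since $\varphi=0$ would force $\psi=0$. Conversely, if $K_V(\lambda)\varphi = \varphi$ with $\varphi \neq 0$, set
\[
\psi := (\Dirac_m-\lambda)^{-1}A\,\sign(V)\beta\varphi .
\]
Then $\psi\in H^{1/2}$, $A\psi = \varphi$, and $\psi$ solves the eigenvalue equation in the form sense, so $\psi\in\Dom(H_V)$.

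The main obstacle is the regime $d<p<2$ (possible only when $d=1$). There $V$ is not operator-bounded, and one has to verify carefully that the Konno–Kuroda construction produces a genuine self-adjoint operator with the claimed domain, rather than merely a form sum. The non-self-adjointness of $K_V$ forces us to use $\|K_V(\ri\eta)\|<1$ rather than positivity, and we must check the symmetry $R(z)^*=R(\bar z)$ by hand.
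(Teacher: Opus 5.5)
Your proposal is correct and follows the same route as the paper. The paper only cites the Kato--Nenciu resolvent construction from its earlier work, the Kato--Seiler--Simon bound for the Birman--Schwinger operator, and Kato--Rellich via Sobolev when $p \ge 2$; your added details (Weyl's theorem for the essential spectrum, the two-sided Birman--Schwinger argument) fill in those same standard steps. One correction: the ordering in your resolvent formula matters. With $K_V(z) = A(\Dirac_m - z)^{-1}B^*$, the correct formula is $R(z) = (\Dirac_m - z)^{-1} + (\Dirac_m - z)^{-1}B^*\bigl(1 - K_V(z)\bigr)^{-1}A(\Dirac_m - z)^{-1}$, whose Neumann series reproduces $\sum_n (\Dirac_m - z)^{-1}\bigl(V\beta(\Dirac_m - z)^{-1}\bigr)^n$.
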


The proof follows the exact same lines as~\cite[Proposition 2.1]{DolGonPiz-23}, which relies on the method of Kato and Nenciu~\cite{Kat-66, Nen-76}. The Kato--Seiler--Simon inequality~\cite[Theorem 4.1]{Sim-05} shows that $K_V(z)$ is in the Schatten class $\fS_p$ whenever $p \ge 2$. Under the same assumption $p \ge 2$, Sobolev's inequalities show that $V\beta$ is $\Dirac_m$-bounded with relative bound $0$. This implies that the domain is $H^1(\R^d, \C^N)$ by the Kato--Rellich theorem~\cite{Rel-37, Kat-51}. In the case $d = 1$ and $1 < p < 2$, the domain is not $H^1(\R, \C^2)$ in general.

\medskip

\subsubsection{Symmetries.} As we already mentioned, the main difference between the Dirac with a Lorentz scalar potential $V \beta$ and the Dirac with a potential $V$ is the presence of extra symmetries. An anti-unitary operator $C$ satisfying $C H_V = - H_V C$ is called a {\bf charge-conjugation symmetry}. It is bosonic if $C^2 = +1$ and fermionic if $C^2 = -1$. Similarly, an anti-unitary operator $T$ satisfying $T H_V = H_V T$ is called a {\bf time-reversal symmetry}. It is bosonic if $T^2 = +1$ and fermionic if $T^2 = -1$. In the presence of both $T$ and $C$ symmetries, we assume that $S = TC$ satisfies $S^2 = +1$. This is a unitary operator satisfying $S H_V= - H_V S$, and called a {\bf S-symmetry}.

\begin{proposition}[Symmetries] 
    \label{prop:symmetries}
    Let $d \ge 1$. The existence of $T$ and/or $C$-symmetries depends on $d$ modulo $8$, as follows.
    \begin{table}[ht]
    \centering
    \renewcommand{\arraystretch}{1.3}
    \begin{tabular}{|c||c|c|c|c|c|c|c|c|}
        \hline
        $d \bmod 8$ & $1$ & $2$ & $3$ & $4$ & $5$ & $6$ & $7$ & $0$ \\
        \hline\hline
        $C^2$ & $+1$ & $+1$ & $+1$ & -- & $-1$ & $-1$ & $-1$ & -- \\
        \hline
        $T^2$ & $+1$ & -- & $-1$ & $-1$ & $-1$ & -- & $+1$ & $+1$ \\
        \hline
        $S$ & yes & -- & yes & -- & yes & -- & yes & -- \\
        \hline
        class & {\rm BDI} & {\rm D} & {\rm DIII} & {\rm AII} & {\rm CII} & {\rm C} & {\rm CI} & {\rm AI} \\
        \hline
    \end{tabular}
    \label{tab:symmetries}
\end{table}

\medskip

    As a consequence, if $d \not\equiv 0 \pmod 4$, then 
    \[
    C (\Dirac_m - V \beta) C^{-1} = - (\Dirac_m - V \beta), \quad \text{hence} \quad \sigma(\Dirac_m - V \beta) = - \sigma(\Dirac_m - V \beta),
    \]
    and the spectrum is symmetric with respect to the origin. Moreover, if $d \pmod 8 ∈ \{ 3, 4, 5 \}$, then all eigenvalues have even multiplicities.
\end{proposition}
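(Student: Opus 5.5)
We construct the symmetries as $C = U_C\,\mathcal{K}$ and $T = U_T\,\mathcal{K}$, where $\mathcal{K}$ is complex conjugation on $L^2(\R^d,\C^N)$ and $U_C, U_T$ are constant $N\times N$ unitaries. Since $V$ is real and $\mathcal{K}(-\ri\partial_j)\mathcal{K} = \ri\partial_j$, we get
\[
    U\mathcal{K}\,H_V\,\mathcal{K}U^* = \sum_j U\overline{\alpha_j}U^*\,(\ri\partial_j) + (m-V)\,U\overline{\beta}U^*.
\]
Hence $C H_V C^{-1} = -H_V$ for all real $V$ if and only if
\[
    U_C\overline{\alpha_j}U_C^* = \alpha_j \quad (1\le j\le d), \qquad U_C\overline{\beta}U_C^* = -\beta .
\]
Similarly, $TH_V = H_V T$ if and only if
\[
    U_T\overline{\alpha_j}U_T^* = -\alpha_j, \qquad U_T\overline{\beta}U_T^* = \beta .
\]
Moreover $C^2 = U_C\overline{U_C}$ and $T^2 = U_T\overline{U_T}$. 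These are $\pm 1$ whenever $U$ is unique up to a phase, i.e.\ when the generators act irreducibly. This reduces the proposition to a finite-dimensional statement about the explicit matrices of the inductive construction. Both conditions preserve the domain $\Dom(H_V)$ described in Proposition~\ref{prop:selfadjoint}, since $\mathcal{K}$ and $U$ commute with $|V|^{1/2}$ and preserve $L^2$.

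\textbf{Base cases.} For $d=1$ ($\alpha_1=\sigma_1$, $\beta=\sigma_3$, both real) we take $U_T=\bbI_2$, giving $T^2=+1$, and $U_C=\sigma_1$, giving $C^2=\sigma_1^2=+1$. For $d=2$ we have $\overline{\sigma_2}=-\sigma_2$. The $C$-conditions ask $U$ to commute with $\sigma_1$ and anticommute with $\sigma_2,\sigma_3$, so $U_C=\sigma_1$ and $C^2=+1$. The $T$-conditions ask $U$ to anticommute with $\sigma_1$ and commute with $\sigma_2,\sigma_3$, which is impossible.

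\textbf{Induction $d\to d+2$.} We use $\overline{\sigma_1}=\sigma_1$, $\overline{\sigma_2}=-\sigma_2$, $\overline{\sigma_3}=\sigma_3$, and look for $U^{(d+2)} = U^{(d)}\otimes W$ with $W\in\{\bbI,\sigma_1,\sigma_2,\sigma_3\}$. Since $\overline{\alpha^{(d)}_j\otimes\sigma_3} = \overline{\alpha^{(d)}_j}\otimes\sigma_3$, the old generators keep their sign if $W$ commutes with $\sigma_3$ and flip it if $W$ anticommutes with $\sigma_3$. Concretely:
\begin{itemize}
    \item A $C$ in dimension $d+2$ comes from $U_C^{(d)}\otimes\sigma_3$ (old signs kept; new generators $\bbI\otimes\sigma_1,\bbI\otimes\sigma_2$ then require $W$ to anticommute with $\sigma_1$ and commute with $\sigma_2$, which fails for $\sigma_3$), or from $U_T^{(d)}\otimes\sigma_1$ or $\sigma_2$ (old signs flipped).
    \item A $T$ in dimension $d+2$ comes analogously from a $C$ or a $T$ in dimension $d$.
\end{itemize}
Carrying out this bookkeeping, together with $\overline{W}W=\pm1$ (in particular $\overline{\sigma_2}\sigma_2=-1$), produces the sign table with period $8$. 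This bookkeeping is the step I expect to be most error-prone, so I would verify it line by line for $d=1,\dots,8$ and then invoke periodicity.

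\textbf{Non-existence in the dashed entries.} These occur for even $d$, where $N=2^{d/2}$ and the $d+1$ generators generate $M_N(\C)$ irreducibly. Their product $\Gamma=\alpha_1\cdots\alpha_d\beta$ is then a scalar $c\,\bbI$ with $|c|=1$, and $c$ is real or purely imaginary depending on $d$. Applying $X\mapsto U\overline{X}U^*$ to $\Gamma = c\,\bbI$ gives $\overline{c} = \pm c$, where the sign is $(-1)^{\#\text{flipped generators}}$. For $C$ one generator is flipped, for $T$ there are $d$ flipped. Comparing with the value of $c$ shows that exactly one of $C$, $T$ exists in each even dimension, matching the table. (Irreducibility also gives uniqueness of $U$ up to phase, so the squares are well defined.)

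\textbf{Consequences.} If $d\not\equiv 0\pmod 4$, then $C$ exists. If $H_V\psi=\lambda\psi$, then $H_V C\psi = -C H_V\psi = -\lambda\,C\psi$, and the same argument transports Weyl sequences, so $\sigma(H_V)=-\sigma(H_V)$. If $d \bmod 8\in\{3,4,5\}$, then $T^2=-1$. For an eigenvector $\psi$, $T\psi$ lies in the same eigenspace, and by anti-unitarity
\[
    \langle\psi,T\psi\rangle = \langle T^2\psi,T\psi\rangle = -\langle\psi,T\psi\rangle ,
\]
so $T\psi\perp\psi$. Thus $T$ acts on each eigenspace as a quaternionic structure, and iterating on orthogonal complements (which are $T$-invariant) shows that every multiplicity is even (Kramers degeneracy).
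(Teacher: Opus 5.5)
\textbf{Gap: the induction that produces the table is not carried out, and the computations you do write down are partly wrong.} Your strategy is the paper's: anti-unitaries $U\mathcal{K}$ with constant $U$, base cases $d=1,2$, the step $d\to d+2$ by tensoring with a Pauli matrix, the product $\alpha_1\cdots\alpha_d\beta$ for the dashed entries, and Kramers' argument. Two of your additions improve on the paper's text. The Schur-lemma remark shows $U\overline{U}=\pm1$ with $U$ unique up to a phase, so the entries $C^2,T^2$ are well defined. Your Kramers iteration is also complete. But the sign table, which is the actual content of the statement, is deferred to ``bookkeeping'', and the explicit computations would give wrong entries if followed literally.

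\textbf{Base case $d=1$.} $U_T=\bbI_2$ is not a time-reversal symmetry: $\mathcal{K}$ turns $-\ri\sigma_1\partial_x$ into $+\ri\sigma_1\partial_x$. By your own criterion, $U_T$ must anticommute with $\sigma_1$ and commute with $\sigma_3$, so $U_T=\sigma_3$. The sign $T^2=\sigma_3\overline{\sigma_3}=+1$ survives.

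\textbf{Induction step.} You state the constraint from the new generators backwards. For $C$, $\bbI\otimes\sigma_1$ is real and $\bbI\otimes\sigma_2$ is imaginary, so $W$ must commute with $\sigma_1$ and anticommute with $\sigma_2$; this forces $W=\sigma_1$. For $T$ the conditions are reversed, which forces $W=\sigma_2$. Both choices anticommute with $\sigma_3$, so both flip the old generators. Hence:
\begin{itemize}
\item there is no ``old signs kept'' option;
\item $U_T^{(d)}\otimes\sigma_2$ is not a $C$;
\item a $T$ in dimension $d+2$ cannot come from a $T$ in dimension $d$.
\end{itemize}
Your list is also internally inconsistent. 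If $U_T\otimes\sigma_1$ and $U_T\otimes\sigma_2$ were both $C$'s, their squares would be $\pm U_T\overline{U_T}$, i.e.\ opposite, which contradicts your own uniqueness-up-to-phase remark.

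\textbf{The correct recursion.} It is the one the paper states:
\[
C_{d+2}=\bigl(U_T^{(d)}\otimes\sigma_1\bigr)\mathcal{K},\quad C_{d+2}^2=T_d^2,\qquad T_{d+2}=\bigl(U_C^{(d)}\otimes\sigma_2\bigr)\mathcal{K},\quad T_{d+2}^2=-C_d^2,
\]
using $\sigma_1\overline{\sigma_1}=\bbI$ and $\sigma_2\overline{\sigma_2}=-\bbI$. Iterate it from two starting points:
\begin{itemize}
\item from $(C^2,T^2)=(+1,+1)$ at $d=1$ you get $(+1,-1)$, $(-1,-1)$, $(-1,+1)$, $(+1,+1)$ for $d=3,5,7,9$;
\item from $C^2=+1$ with no $T$ at $d=2$ you get $T^2=-1$, then $C^2=-1$, then $T^2=+1$, then $C^2=+1$ for $d=4,6,8,10$.
\end{itemize}
This is exactly the table, and it is the step your proposal is missing.

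\textbf{Dashed entries.} You should make explicit that $c^2=\Gamma^2=(-1)^{d(d+1)/2}$, so $c$ is real if and only if $d\equiv 0\pmod 4$. That is what decides which symmetry is excluded in each even dimension. The remaining parts (spectral symmetry and Kramers degeneracy) are correct.
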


The classes are the Altland--Zirnbauer classes~\cite{AltZir-97, HeiHucZir-05} (see also~\cite{GonMonPer-22, GonTau-25}), whose names come from Cartan's classification of symmetric spaces~\cite{Car-26, Car-27}. It is interesting to note that, as $d$ varies, the massive Dirac operator model spans the $8$ Altland--Zirnbauer classes, in the order dictated by Bott periodicity~\cite{Bot-56, AtiBotSha-64}, as in the periodic table of topological insulators~\cite{Kit-09, RyuSchFur-10}. 

\begin{proof}
    We denote by $\cK : f \mapsto \overline{f}$ the entry-wise complex conjugation. We say that a unitary matrix $U$ is of type $\pm$ if
    \[
        \forall j \in \{1, \cdots, d\}, \quad U \, \overline{\alpha_j} \, U^* = \mp \alpha_j, \qquad \text{and} \qquad U \, \overline{\beta} \, U^* = \pm \beta.
    \]
    Since $\cK ( - \ri \partial_j) \cK = + \ri \partial_j$, we easily check that if $U$ is of type $+$, then $T := U \cK$ is a $T$-symmetry, and if $U$ is of type $-$, then $C := U \cK$ is a $C$-symmetry.

    \medskip
\noindent\textbf{ Existence:}
    In dimension $d = 1$, we took $\alpha_1 = \sigma_1$ and $β = \sigma_3$. Since $\overline{\sigma_1} = \sigma_1$ and $\overline{\sigma_3} = \sigma_3$, we deduce that $\sigma_1$ is of type $-$ while $\sigma_3$ is of type $+$. The corresponding anti-unitaries $C = \sigma_1 \cK$ and $T = \sigma_3 \cK$ satisfy $C^2 = +1$ and $T^2 = +1$. In dimension $d = 2$, we took $\alpha_1 = \sigma_1$, $\alpha_2 = \sigma_2$ and $β = \sigma_3$, and since $\overline{\sigma_2} = -\sigma_2$, we deduce that $\sigma_1$ is of type $-$ in dimension $d = 2$. The corresponding $C$ symmetry satisfies $C^2 = +1$.
    
    \medskip
    
    Following the construction of the $\alpha_j$ matrices in the previous section, one checks that if $U$ is of type $+$ in dimension $d$, then $U \otimes \sigma_1$ is of type $-$ in dimension $d+2$, satisfying $(U ⊗ \sigma_1 \cK)^2 = (U \cK)^2\otimes \bbI$. On the other hand, if $U$ is of type $-$, then $U \otimes \sigma_2$ is of type $+$, with  $(U ⊗ \sigma_2 \cK)^2 = - (U \cK)^2 \otimes \bbI$. In other words, a fermionic (resp. bosonic) $C$ symmetry in dimension $d$ gives rise to a bosonic (resp. fermionic) $T$ symmetry in dimension $d + 2$ and a fermionic (resp. bosonic) $T$ symmetry in dimension $d$ gives rise to a fermionic (resp. bosonic) $C$ symmetry in dimension $d+2$, as can be read off from the table. 

    \medskip
\noindent\textbf{ Multiplicity:}
    In the case where the operator enjoys a fermionic $T$--symmetry, all eigenvalues of $\Dirac_m - V\beta$ have even multiplicity by Kramers' argument~\cite{Kra-30} (if $(H_V - \lambda) ψ = 0$ then the vector $ϕ := T \psi$ satisfies $(H_V - \lambda) ϕ = 0$, and $⟨ \phi, ψ ⟩ = 0$).
    \medskip
    
\noindent\textbf{ Non-existence:}   
  When $d \equiv 0 \pmod 4$, there are no operators $U$ of type $-$, hence no $C$-symmetry. To see this, note that the matrix $\omega := \alpha_1 \cdots \alpha_d \beta$ would satisfy $U \overline{\omega} U^{*} = - \omega$ for $U$ of type $-$. On the other hand a direct induction shows that $ω = (-\ri)^{d/2} \bbI_N$ for $d$ even, which is real if $d \equiv 0 \pmod 4$, so that $U \overline{\omega} U^* = \omega$, a contradiction.  
  The same argument shows that there are no operators $U$ of type $+$ if $d  \equiv 2 \pmod 4$. 
\end{proof}

\begin{remark} \label{rem:prob_sym}
At this point, what we minimize is $\lambda_1(V)$, which is the lowest eigenvalue {\em in magnitude}. It is unclear a priori that it indeed corresponds to the lowest {\em positive} eigenvalue, as claimed in the title. It is the case when $d \not≡ 0 \pmod 4$ by symmetry. When $d ≡ 0 \pmod 4$, the discrete spectrum of $H_V := \Dirac_m - Vβ $ is not necessarily symmetric with respect to the origin, and of the form
\[
\sigma_{\rm disc}(H_V) = \{ \lambda_1^+, \lambda_2^+, \cdots  \} ∪ \{ \lambda_1^-, \lambda_2^-, \cdots  \}, \qquad
0 \le \lambda_1^+ \le \lambda_2^+ \le \cdots , \quad 0 \ge \lambda_1^- \ge \lambda_2^- \ge \cdots,
\]
and we minimize $\lambda_1(V) := \min \{ | \lambda_1^+| , | \lambda_1^-| \}$.
Even though the spectrum for an individual operator $\Dirac_m -V\beta$ is not necessarily symmetric, the unitary operator $\alpha_1 P_1$, where $P_1$ is a reflection in the first coordinate, anticommutes with $\Dirac_m$ and transforms $V$ into $\widetilde V$, its reflection with respect to the plane $x_1 = 0$.  
Hence $\lambda^{\pm}_1(V) = - \lambda^{\mp}_1(\widetilde V)$ and $\inf_V  \lambda_1^+(V) = \inf_V | \lambda_1^-(V) | = \inf_V \lambda_1(V)$.
\end{remark}

\subsection{First bounds of \texorpdfstring{$\lambda_1(V)$}{lambda1(V)}}

Recall that our goal is to minimize the quantity
\[
    \lambda_1(V) = \inf \left\{  \| (\Dirac_m - V\beta)ψ \|_{L^2}, \quad ψ ∈ \Dom(H_V), \quad \| ψ \|_{L^2} = 1 \right\}.
\]
Let us first state some simple bounds on this first eigenvalue.
\begin{lemma}[A priori bounds] \label{lem:bounds_lambda1}
    Let $d \ge 1$ and $p > d$.
    \begin{enumerate}
        \item (Upper bound) For all $V ∈ L^p(\R^d, \R)$, we have
        \[
            \lambda_1(V)^2 \le \inf \sigma \big( -\Delta + (m - V)^2 \big).
        \]
        \item (Lower bound) There is $c_{d,p} > 0$ so that, for all $V ∈ L^p(\R^d, \R)$, we have
        \[
            \lambda_1(V) \ge m - c_{d,p} m^{d/p} \| V \|_{L^p}, \qquad \text{hence} \qquad \Lambda(\alpha) \ge m - c_{d,p} m^{d/p} \alpha.
        \]
        If in addition $V ∈ L^\infty(\R^d, \R)$, then $\lambda_1(V) \ge m - \| V \|_{L^\infty}$.
    \end{enumerate}
    
\end{lemma}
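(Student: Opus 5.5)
For the upper bound, I will compute the square of $H_V$ on nice functions. For $\psi \in H^1$ (or $C_c^\infty$), the anticommutation relations give
\[
\|(\Dirac_m - V\beta)\psi\|^2 = \|\nabla \psi\|^2 + \|(m-V)\psi\|^2 + \langle \psi, \ri\, \balpha\cdot(\nabla V)\beta \psi\rangle ,
\]
where the cross term comes from $\{\balpha\cdot(-\ri\nabla), (m-V)\beta\}$. Since this cross term need not vanish in general, I will not expand it directly. Instead I will restrict to trial spinors of the form $\psi = u\, e$, with $u$ real-valued scalar and $e \in \C^N$ a fixed unit vector.

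For such $\psi$ the cross term is $2\,\mathrm{Re}\langle -\ri\, \alpha_j \partial_j u\, e, (m-V)u\beta e\rangle$. This equals $\int (m-V) u \partial_j u \cdot 2\,\mathrm{Re}\bigl(\ri \langle \alpha_j e, \beta e\rangle\bigr)$. Now $\langle \alpha_j e,\beta e\rangle + \langle \beta e, \alpha_j e\rangle = \langle e, \{\alpha_j,\beta\} e\rangle = 0$, so $\langle \alpha_j e, \beta e\rangle$ is purely imaginary. Hence $\ri\langle\alpha_j e,\beta e\rangle$ is real and the cross term does not vanish automatically.

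A better choice is to pick $e$ so that $\langle \alpha_j e, \beta e\rangle = 0$. For instance, take $e$ an eigenvector of $\beta$, say $\beta e = e$. Then $\langle \alpha_j e, e\rangle = \langle \alpha_j \beta e, e\rangle = -\langle \beta\alpha_j e, e\rangle = -\langle \alpha_j e, \beta e\rangle = -\langle \alpha_j e, e\rangle$, which vanishes. With this choice $\|H_V \psi\|^2 = \|\nabla u\|^2 + \|(m-V)u\|^2$, and the diagonal term $\sum_{j}\|\alpha_j\partial_j u\, e\|^2$ reduces to $\|\nabla u\|^2$ by the Clifford relations. Taking the infimum over normalized $u$ gives the bound by the Rayleigh quotient $\inf\sigma(-\Delta+(m-V)^2)$.

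The main technical point is domain and density. I will take $u\in C_c^\infty$, where $V u\in L^2$ requires some care when $p<2$ (possible only for $d=1$). I expect to handle this by approximating $V$, or by using the form domain of $-\Delta+(m-V)^2$ together with $\lambda_1(V) = \inf \|H_V\psi\|$ over the operator domain. Since $u e \in \Dom(H_V)$ whenever $u\in H^1$ with $(m-V)u \in L^2$, and $C_c^\infty$ is a form core, the infimum over $u \in C_c^\infty \cap \{Vu\in L^2\}$ suffices.

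For the lower bound, I will use the triangle inequality $\|H_V\psi\| \ge \|\Dirac_m\psi\| - \|V\psi\|$. In the bounded case this gives at once $\|H_V\psi\|\ge m - \|V\|_\infty$, since $\|\Dirac_m\psi\|\ge m\|\psi\|$. In general I will use Hölder and then Sobolev (or Gagliardo–Nirenberg) with $p>d$:
\[
\|V\psi\|_{2} \le \|V\|_p \|\psi\|_{2p/(p-2)}
\]
for $p\ge2$. I then bound $\|\psi\|_{q}\le C\|\psi\|_{H^{d/p}}$ via $(-\Delta+m^2)^{d/(2p)}$. Since $d/p<1$, interpolation gives $\|(-\Delta+m^2)^{d/(2p)}\psi\| \le \|\Dirac_m\psi\|^{d/p}\|\psi\|^{1-d/p}$. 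Then $\|H_V\psi\| \ge t - c\alpha\, t^{d/p}$ with $t=\|\Dirac_m\psi\|\ge m$.

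The function $t\mapsto t - c\alpha t^{d/p}$ is not monotone in general. I will instead use $t^{d/p}\le m^{d/p} + (t-m)$-type concavity, via $t^{d/p} \le m^{d/p} + \tfrac{d}{p} m^{d/p-1}(t-m)$, which gives $t - c\alpha t^{d/p} \ge m - c\alpha m^{d/p}$ as long as $c\alpha \frac{d}{p}m^{d/p-1}\le 1$. When this condition fails, the claimed bound is at most $m - \tfrac{p}{d}m < 0$ and holds trivially. The scaling $m^{d/p}$ is consistent with dilations.

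The case $p<2$ (only $d=1$) needs the quadratic-form version, $\langle\psi,|V|\psi\rangle \le \|V\|_p\|\psi\|_{2p'}^2$, paired with $\|H_V\psi\|\|\psi\|\ge|\langle \Dirac_m\psi... \rangle|$. This step I expect to be the main obstacle. I would try working with $\Lambda_\pm$ spectral projections of $\Dirac_m$ there.
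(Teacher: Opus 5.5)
Your upper bound is the paper's argument: the trial spinor $u\,e$ with $\beta e=e$ makes $\langle e,\alpha_j e\rangle=0$, which kills the cross term pointwise. No core argument is needed there. You can minimize over the whole form domain $\{u\in H^1,\ (m-V)u\in L^2\}$, since each $u\,e$ lies in $\Dom(H_V)$.

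For $p\ge 2$ your lower bound is correct but follows a different route from the paper. The paper uses the Birman--Schwinger principle: it bounds $\|K_V(\lambda)\|_{\rm op}$ via $|(\Dirac_m-\lambda)^{-1}|\le \frac{m}{m-\lambda}(-\Delta+m^2)^{-1/2}$ and the Kato--Seiler--Simon inequality, which also yields an explicit $c_{d,p}$. You instead bound $\inf\|H_V\psi\|$ directly, by the triangle inequality, H\"older, Sobolev, spectral interpolation and concavity. Your route needs neither the Birman--Schwinger principle nor the location of $\sigma_{\rm ess}$, but it relies on $\Dom(H_V)=H^1$. At $p=2$, $d=1$ the embedding $H^{1/2}(\R)\subset L^\infty$ is false; use $\|\psi\|_{L^\infty}^2\le\|\psi\|_{L^2}\|\psi'\|_{L^2}$ instead.

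The genuine gap is the case $d=1$, $1<p<2$, which the lemma covers and which you leave open. There $\Dom(H_V)\not\subset H^1$. If $V\notin L^2_{\rm loc}$, an eigenfunction (continuous and nowhere vanishing in $d=1$) has $V\psi\notin L^2$, hence $\Dirac_m\psi\notin L^2$, so $\|\Dirac_m\psi\|-\|V\psi\|$ is meaningless.

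Your suggested repair $\|H_V\psi\|\,\|\psi\|\ge|\langle\psi,H_V\psi\rangle|$ also fails: $\Dirac_m$ is indefinite, so $\langle\psi,\Dirac_m\psi\rangle$ gives no lower bound of order $m$.

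The missing idea is to test against $\operatorname{sgn}(\Dirac_m)\psi=(\Lambda_+-\Lambda_-)\psi$, a unitary image of $\psi$ that stays in $H^{1/2}$. For normalized $\psi\in\Dom(H_V)\subset H^{1/2}$,
\[
\|H_V\psi\|\ \ge\ \operatorname{Re}\langle\operatorname{sgn}(\Dirac_m)\psi,H_V\psi\rangle=\langle\psi,|\Dirac_m|\psi\rangle-\operatorname{Re}\int_{\R^d}V\,\langle\operatorname{sgn}(\Dirac_m)\psi,\beta\psi\rangle_{\C^N}.
\]
The last term is at most $\||V|^{1/2}\operatorname{sgn}(\Dirac_m)\psi\|\,\||V|^{1/2}\psi\|\le \varepsilon\,\langle\psi,|\Dirac_m|\psi\rangle$, with $\varepsilon:=C\,m^{d/p-1}\|V\|_{L^p}$. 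This follows from the form-level bound
\[
\langle f,|V|f\rangle\le\|V\|_{L^p}\|f\|_{L^{2p'}}^2\le C\|V\|_{L^p}\|(-\Delta)^{\frac{d}{4p}}f\|_{L^2}^2\le C\,m^{\frac dp-1}\|V\|_{L^p}\langle f,|\Dirac_m|f\rangle,
\]
applied to $f=\psi$ and $f=\operatorname{sgn}(\Dirac_m)\psi$. This bound holds for every $p>d$, because it only needs $\frac{d}{2p}<\frac12$, and it never requires $V\psi\in L^2$.

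Since $\langle\psi,|\Dirac_m|\psi\rangle\ge m$, you get $\|H_V\psi\|\ge(1-\varepsilon)m=m-Cm^{d/p}\|V\|_{L^p}$ when $\varepsilon\le 1$; the claim is trivial otherwise. This is exactly the half-derivative estimate contained in the paper's bound on $\||V|^{1/2}(-\Delta+m^2)^{-1/4}\|$, and it treats all $p>d$ uniformly.
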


\begin{proof}
Let us first prove the upper bound. The square of the Dirac operator is formally the matrix-valued Schrödinger operator
\begin{equation} \label{eq:square}
    (\Dirac_m - V \beta)^2 = - \Delta + (m - V)^2 + \ri \, (\balpha \cdot \nabla V) \beta.
\end{equation}
Consider a test function of the form $ψ = f \be$, where $f ∈ C^\infty_0(\R^d, \R)$ and $\be ∈ \R^N$ is a fixed normalized vector, independent of $x$. Then
\[
    ⟨ ψ, (\Dirac_m - V \beta)^2 ψ ⟩ = \int_{\R^d} | ∇ f |^2 + \int_{\R^d} ( m - V)^2 | f |^2 + \sum_{j=1}^d ⟨ \be, \alpha_j β \be ⟩_{\C^N}  \int_{\R^d} V (\ri \partial_j) (| ψ |^2).
\]
We choose $\be$ so that $β \be = \be$. In particular, we have 
\[
   ⟨ \be, \alpha_j \be ⟩ = ⟨ \be, \alpha_j β \be ⟩ = ⟨ \be, - β \alpha_j \be ⟩ = - ⟨ \be, \alpha_j \be ⟩,
\]
so all quantities $⟨ \be, \alpha_j β \be ⟩_{\C^N}$ vanish. Minimizing over $f ∈ C^\infty_0(\R^d, \R)$ shows the upper bound.

\medskip

To prove the lower bound, we use the Birman--Schwinger principle. Consider $0 \le \lambda < m$ (the proof for $-m < λ \le 0$ is similar). The operator $(\Dirac_m - \lambda)^{-1}$ is a Fourier multiplier by the matrix $(\balpha \cdot \bk + m \beta - \lambda)^{-1}$, whose eigenvalues are $(\pm \sqrt{| \bk |^2 + m^2} - \lambda)^{-1}$. From the inequalities 
\[
    \left| \dfrac{1}{\balpha \cdot \bk + m \beta - \lambda} \right| \le \dfrac{1}{\sqrt{| \bk |^2 + m^2} - \lambda} \le \dfrac{m}{m - \lambda} \dfrac{1}{\sqrt{\bk^2 + m^2}},
\]
we get the operator bound $|(\Dirac_m - \lambda)^{-1}|  \le \frac{m}{m - \lambda} (- Δ + m^2)^{-1/2}$. This gives
\begin{align*}
    \left\|K_V(\lambda) \right\|_{\rm op} & = \left\| |V|^{1/2} (\Dirac_m - \lambda)^{-1} |V|^{1/2} \right\|_{\rm op} 
    \le \frac{m}{m - \lambda} \left\| |V|^{1/2} ( - \Delta + m^2)^{-1/2} |V|^{1/2} \right\|_{\rm op} \\
    & = \frac{m}{m - \lambda} \left\| |V|^{1/2} ( - \Delta + m^2)^{-1/4} \right\|_{\rm op}^2 
    \le \frac{m}{m - \lambda} \left\| |V|^{1/2} ( - \Delta + m^2)^{-1/4} \right\|_{\fS_{2p}}^2 
\end{align*}
To bound the last norm, we use the Kato--Seiler--Simon inequality~\cite[Theorem 4.1]{Sim-05}: for all $r \ge 2$ and all $f, g \in L^r(\R^d)$, the operator $f(x) g(-\ri \nabla)$ belongs to the Schatten class $\fS_r$, with
\begin{equation*} 
    \nrm{f(x) \, g(-\ri \nabla)}{\fS_r} \le \frac{1}{(2 \pi)^{d/r}} \nrm{f}{L^r} \nrm{g}{L^r}.
\end{equation*}
We obtain, with a scaling argument, a bound of the form
\[
    \left\|K_V(\lambda) \right\|_{\rm op} \le \dfrac{m^{d/p}}{m - \lambda} c_{d,p} \| V \|_{L^p}
\]
with the explicit constant
\[
    c_{d,p} := \frac{1}{(2\pi)^{d/p}} \Big( \int_{\R^d} \frac{\rd \bk}{(1 + |\bk|^2)^{p/2}} \Big)^{1/p} = \frac{1}{(4 \pi)^{\frac{d}{2p}}} \left( \dfrac{Γ \left( \frac{p-d}{2} \right)}{Γ \left( \frac{p}{2} \right)} \right)^{\frac{1}{p}}.
\]
The Birman--Schwinger principle states that $\lambda$ is an eigenvalue of $\Dirac_m - V \beta$ if and only if $1$ is an eigenvalue of $K_V(\lambda)$. This implies in particular that $\| K_V(\lambda) \|_{\rm op} \ge 1$, and can only happen if
\[
    m^{d/p} c_{d,p} \| V \|_{L^p} \ge m - \lambda.
\]

Finally, if $V \in L^\infty(\R^d, \R)$, then for all $\psi \in H^1(\R^d, \C^N)$, we have
\[
    \| (\Dirac_m - V\beta) \psi \|_{L^2} \ge \| \Dirac_m \psi \|_{L^2} - \| V \beta \psi \|_{L^2} \ge \big( m - \nrm{V}{L^\infty} \big) \| \psi \|_{L^2},
\]
which implies $\lambda_1(V) \ge m - \nrm{V}{L^\infty}$.
\end{proof}

\medskip

The linear lower bound for $\Lambda( α )$ implies in particular that $\alpha_*(p,d) \ge (c_{d,p} m^{\frac{d}{p}-1})^{-1} > 0$.

\section{The Keller problem for the Dirac operator}
\label{sec:Keller}

In what follows, we fix $d \ge 1$ and $p > d$. Recall that the Keller problem $\Lambda(\alpha)$ is defined in~\eqref{eq:def:Lambda_intro} with the constraint $\nrm{V}{L^p} \le \alpha$, so that $\alpha \mapsto \Lambda(\alpha)$ is nonincreasing. 

\begin{lemma} 
    \label{lem:Lambda_first_properties}
    The function $\alpha \mapsto \Lambda(\alpha)$ is nonincreasing and continuous on $(0, \infty)$. In addition,
    \[
        \forall \alpha > 0, \qquad \max \big(0, m - c_{d,p} m^{d/p} \alpha \big) \le \Lambda(\alpha) < m, \qquad \text{and} \qquad \lim_{\alpha \to \infty} \Lambda(\alpha) = 0.
    \]
\end{lemma}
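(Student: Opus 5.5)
The lower bound $\Lambda(\alpha) \ge \max(0, m - c_{d,p} m^{d/p}\alpha)$ follows directly from Lemma~\ref{lem:bounds_lambda1}(2), since $\lambda_1 \ge 0$ by definition. Monotonicity is immediate from the constraint $\nrm{V}{L^p} \le \alpha$.

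\textbf{Strict upper bound $\Lambda(\alpha)<m$.} We use Lemma~\ref{lem:bounds_lambda1}(1) with a small bump $V = \varepsilon \chi$, where $\chi \in C_c^\infty$, $0\le\chi\le1$, and $\chi=1$ on a large ball $B_R$. Then $(m-V)^2 = m^2 - 2m\varepsilon\chi + \varepsilon^2\chi^2$. We take a test function $f$ supported in $B_R$, for instance the rescaled Dirichlet ground state of $B_R$. This gives
\[
\inf\sigma\bigl(-\Delta+(m-V)^2\bigr) \le \frac{C}{R^2} + m^2 - 2m\varepsilon + \varepsilon^2 .
\]
Choosing $R$ large with $\varepsilon$ fixed and small, the right side is $< m^2$. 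We then shrink $\varepsilon$ or adjust $R$ so that $\nrm{V}{L^p} \le \varepsilon |B_{2R}|^{1/p} \le \alpha$. Concretely, fix $R$ first and take $\varepsilon = c\,\alpha R^{-d/p}$. The gain is $2m\varepsilon \sim \alpha R^{-d/p}$ and the loss is $C R^{-2}$. Since $p>d$, we have $d/p<1\le 2$, so for $R$ large the gain $R^{-d/p}$ dominates $R^{-2}$. Hence $\lambda_1(V)^2<m^2$ and $\Lambda(\alpha)<m$ for every $\alpha>0$.

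\textbf{Limit $\alpha\to\infty$.} We take $V = m\chi_R$ with $\chi_R = \chi(\cdot/R)$, so that $m - V = 0$ on $B_R$. With the same Dirichlet test function on $B_R$, we get $\lambda_1(V)^2 \le C/R^2$. Since $\nrm{V}{L^p} = m\nrm{\chi}{L^p}R^{d/p}$, we have $\Lambda(\alpha) \le C' \alpha^{-p/d} \to 0$.

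\textbf{Continuity.} This is the main point. Since $\Lambda$ is nonincreasing, it suffices to exclude jumps, and we use scaling.
\begin{itemize}
\item \emph{Left limits.} Given $V$ with $\nrm{V}{L^p}\le\alpha$, the potential $tV$ with $t<1$ has norm $\le t\alpha$. We need $\lambda_1(tV)$ close to $\lambda_1(V)$. Take an approximate minimizer $\psi$ for $\lambda_1(V)$ via $\|(\Dirac_m-V\beta)\psi\|$, which may be chosen in $H^1$ by density in the graph norm. Then
\[
\|(\Dirac_m - tV\beta)\psi\| \le \|(\Dirac_m - V\beta)\psi\| + (1-t)\|V\psi\|_{L^2}.
\]
Moreover $\|V\psi\|_{L^2}\le \nrm{V}{L^p}\|\psi\|_{L^{2p/(p-2)}}$, which is finite by Sobolev when $p\ge2$, since $H^1\subset L^q$ for the relevant $q$ when $p>d$. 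This shows $\Lambda(t\alpha)\le \Lambda(\alpha)+\eta$ for $t$ close to $1$, i.e.\ left continuity.
\item \emph{Right limits.} Apply the same argument to near-optimal $V$ at level $t\alpha$ with $t>1$, compared with $V/t$. Here one needs $\|V\psi\|$ uniformly bounded along a sequence. This follows from the bound $\|\psi\|_{H^{1/2}}\lesssim \|H_V\psi\| + \|\psi\|$ with constants depending only on $\nrm{V}{L^p}$, obtained as in Proposition~\ref{prop:selfadjoint} through a Kato--Seiler--Simon estimate.
\item \emph{Case $1<p<2$, $d=1$.} Here the domain is not $H^1$, and the estimate must instead be run through $|V|^{1/2}\psi\in L^2$ with $H^{1/2}$ bounds. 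I expect this uniform-control step to be the main technical obstacle.
\end{itemize}
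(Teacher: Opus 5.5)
The lower bound, the monotonicity, the bound $\Lambda(\alpha)<m$ and the limit $\Lambda(\alpha)\to 0$ are fine and match the paper. The paper uses $h$ times the indicator of $B_R$ instead of your smooth bump, but the comparison $R^{-d/p}\gg R^{-2}$ is the same.

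The gap is in continuity. Your amplitude scaling $V\mapsto tV$ relies on
$\|(\Dirac_m-tV\beta)\psi\|\le\|(\Dirac_m-V\beta)\psi\|+|1-t|\,\|V\psi\|_{L^2}$. So it needs $V\psi\in L^2$, and for right limits it needs a bound on $\|V\psi\|_{L^2}$ that is uniform along near-optimal sequences.

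\emph{The case $d=1$, $1<p<2$, which you leave open.} Here this is a genuine obstruction, not a technicality. An element $\psi$ of the domain of $H_V$ need not satisfy $V\psi\in L^2$. For instance, take $V=|x|^{-a}$ on $[-1,1]$ with $\frac12\le a<\frac1p$. Every eigenfunction is continuous and nonvanishing by ODE uniqueness, so $V\psi\notin L^2$. Hence $\psi$ need not lie in the domain of $H_{tV}$, and your estimate has no meaning. The lemma is therefore not proved in this case.

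\emph{The case $p\ge 2$.} Your route can be made to work, but not with the $H^{1/2}$ bound you invoke. You have $\|V\psi\|_{L^2}\le\|V\|_{L^p}\|\psi\|_{L^{2p/(p-2)}}$, while $H^{1/2}(\R^d)\subset L^q$ only for $q\le 2d/(d-1)$, which excludes $d<p<2d$. What you need is a uniform $H^1$ bound. By Gagliardo--Nirenberg, for $\|V\|_{L^p}\le 2\alpha$,
$\|V\psi\|_{L^2}\le C\|V\|_{L^p}\|\nabla\psi\|^{d/p}\|\psi\|^{1-d/p}\le\frac12\|\nabla\psi\|+C_\alpha\|\psi\|$.
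This gives $\|\nabla\psi\|\le 2\|H_V\psi\|+2C_\alpha\|\psi\|$, which is uniform along near-optimal sequences. With this repair your argument is correct for $p\ge2$.

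The missing idea, which the paper uses, is to scale in space rather than in amplitude. Set $V_s(x):=sV(sx)$ and $\psi_s(x):=s^{d/2}\psi(sx)$. One has the exact identity
\[
(\Dirac_m - V_s\beta)\psi_s = s^{d/2}\bigl(s(\Dirac_m - V\beta)\psi + m(1-s)\beta\psi\bigr)(s\,\cdot).
\]
The only discrepancy is the bounded term $m(1-s)\beta\psi$, so no control of $V\psi$ is needed. Moreover, the dilation maps the domain of $H_V$ onto that of $H_{V_s}$ for every $p>d$, including $d=1$, $1<p<2$. Hence
$\lambda_1(V_s)\le s\lambda_1(V)+m|1-s|$ and $\|V_s\|_{L^p}=s^{1-d/p}\|V\|_{L^p}$.

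For $\alpha<\alpha'$, apply this to a near-optimal $V$ at level $\alpha'$ with $s=(\alpha/\alpha')^{p/(p-d)}<1$, so that $\|V_s\|_{L^p}\le\alpha$. This gives
$0\le\Lambda(\alpha)-\Lambda(\alpha')\le m\bigl(1-(\alpha/\alpha')^{p/(p-d)}\bigr)$,
that is, continuity with an explicit modulus in all cases at once. Note that it is $s<1$, shrinking the norm, that is needed here; for $s\ge 1$ the resulting inequality is already implied by monotonicity.
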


\begin{proof}
The lower bound for $\Lambda(\alpha)$ comes from Lemma~\ref{lem:bounds_lambda1} and the fact that $\Lambda(\alpha) \ge 0$. 

\medskip

For the upper bounds, for $ h > 0$ and $R > 0$, we consider the test potential $V_{h, R} := h \1_{B_R}$, which satisfies $\| V_{h, R} \|_{L^p} = h | B_1 |^{1/p} R^{d/p}$. Using the upper bound of Lemma~\ref{lem:bounds_lambda1}, we have, with $h := α | B_1 |^{-1/p} R^{-d/p}$, 
\[
    \Lambda(\alpha)^2 \le \lambda_1(V_{h,R})^2 \le \lambda_1( - Δ + ( m - V_{h, R})^2).
\]
To give an upper bound for the last eigenvalue, we consider the first Dirichlet eigenfunction of $-\Delta$ on $B_R$ and extend it by zero. The associated eigenvalue is $\ell_R = \ell_1 R^{-2}$. We get
\[
    \Lambda(\alpha)^2 \le \frac{\ell_1}{R^2} + ( m - h)^2 = m^2 - 2 m \frac{α | B_1 |^{-1/p}}{R^{d/p}} + \frac{α^2 | B_1 |^{-2/p}}{R^{2d/p}} + \frac{\ell_1}{R^2}
\]
Since $d/p < 1$, for fixed $\alpha$ and large $R$ we have
\[
 \Lambda(\alpha)^2  \le  m^2 - 2 m \frac{α | B_1 |^{-1/p}}{R^{d/p}}  (1 + o(1)) < m^2,
\]
so $\Lambda(\alpha) < m$. On the other hand, simply taking $h = m$ and $R → \infty$ (so that $α → \infty$ as well) shows that $\Lambda(\alpha) → 0$ as $α → \infty$.

\medskip

It remains to prove the continuity of the map $\Lambda(\alpha)$. We use a scaling argument. For $s > 0$ and $V \in L^p(\R^d, \R)$, we set $V_s(x) := s V(s x)$, which satisfies $\| V_s \|_{L^p} = s^{1 - d/p} \| V \|_{L^p}$. For $\psi \in \Dom(H_V)$, we set $\psi_s(x) := s^{d/2} \psi(s x)$, so that $\| \psi_s \|_{L^2} = \| \psi \|_{L^2}$. A direct computation gives
\[
    \big( (\Dirac_m - V_s \beta) \psi_s \big)(x) = s^{d/2} \Big( s (\Dirac_m - V \beta) \psi + m (1 - s) \beta \psi \Big)(s x).
\]
Since $\beta$ is unitary, we have
\[
    \| (\Dirac_m - V_s \beta) \psi_s \|_{L^2} \le s \| (\Dirac_m - V \beta) \psi \|_{L^2} + m | 1 - s | \, \| \psi \|_{L^2}.
\]
Taking the infimum over all normalized $\psi$ gives $\lambda_1(V_s) \le s \lambda_1(V) + m | 1 - s |$.

\medskip
We then use $\Lambda(s^{1 - d/p} \alpha) \le \lambda_1(V_s)$, optimize the right-hand-side over $V$ with $\| V \|_{L^p} = \alpha$, and use that $\Lambda(\alpha)$ is non-increasing to get, for all $s \ge 1$,
\[
    \Lambda(\alpha) \le \Lambda(s^{1 - d/p} \alpha) \le s \Lambda(\alpha) + m (s - 1).
\]
Setting $\alpha' := s^{1 - d/p} α \ge α$, so that $s = (\alpha'/\alpha)^{\frac{p}{p-d}} \ge 1$, this inequality also reads:
\[
    ∀ α \le α', \quad 0 \le Λ(α') - Λ(α) \le \left( \left( \frac{\alpha'}{\alpha} \right)^{\frac{p}{p-d}} - 1 \right)(Λ(α) + m) \le 2m \left( \left( \frac{\alpha'}{\alpha} \right)^{\frac{p}{p-d}} - 1 \right),
\]
which implies continuity of $α ↦ Λ(α)$.
\end{proof}

\section{Existence of optimal potentials}
\label{sec:existence}

In this section, we prove Theorem~\ref{thm:existence}. We first prove that optimal potentials exist for all $\alpha > 0$. We then derive the Euler--Lagrange equations, and show that optimal potentials saturate the constraint $\nrm{V}{L^p} \le \alpha$ as long as $\Lambda(\alpha) > 0$.

\subsection{A priori bounds and compactness}

\begin{lemma}[A priori bounds on eigenfunctions] 
    \label{lem:regularity_eigenfunctions}
    Let $α  > 0$ and $\lambda_{\max} \in (0, m)$. There are constants $0 < c \le C < \infty$ such that, for all $V$ with $\nrm{V}{L^p} \le α$ and all normalized eigenfunctions $\psi$ of $\Dirac_m - V\beta$ with eigenvalue $\lambda \in [-\lambda_{\max}, \lambda_{\max}]$, the function $\psi$ is continuous and satisfies the bounds
    \begin{align*}
        \nrm{\psi}{W^{1,p}} \le C \quad \text{and} \quad c \le \nrm{\psi}{L^\infty} \le C \quad \text{and} \quad \nrm{\psi}{L^2(B_1(x))} \ge  c \quad \text{for some $x ∈ \R^d$.}
    \end{align*}
\end{lemma}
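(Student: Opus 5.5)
We argue by bootstrap on the resolvent identity. Since $\lambda \in [-\lambda_{\max}, \lambda_{\max}]$ lies in the gap and $\Dirac_m \psi - \lambda\psi = V\beta\psi$, we can write
\[
    \psi = (\Dirac_m - \lambda)^{-1} V \beta \psi .
\]
The operator $(\Dirac_m - \lambda)^{-1}$ is a Fourier multiplier. Its symbol and all of its derivatives are bounded uniformly in $\lambda$ by $C(m - \lambda_{\max})^{-1}\langle \bk\rangle^{-1}$-type Mikhlin bounds. Hence $(\Dirac_m-\lambda)^{-1}$ maps $L^r$ to $W^{1,r}$ boundedly, uniformly in $|\lambda|\le \lambda_{\max}$, for all $1<r<\infty$; this follows by writing it as $(\Dirac_m+\lambda)(-\Delta+m^2-\lambda^2)^{-1}$ and using Bessel potential estimates.

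We start from $\psi \in L^2$ with norm $1$ (and $\psi\in H^{1/2}$ by Proposition~\ref{prop:selfadjoint}, though we will not need this). By Hölder, $V\beta\psi \in L^{r_0}$ with $1/r_0 = 1/p + 1/2$ and norm at most $\alpha$. Then $\psi \in W^{1,r_0}$, and Sobolev embedding gives $\psi\in L^{q_1}$ with $1/q_1 = 1/p + 1/2 - 1/d$. Iterating, each step lowers the reciprocal integrability exponent by $1/d - 1/p > 0$, since $p>d$. This gain is fixed, so after finitely many steps (a number depending only on $d,p$) we reach $\psi\in L^\infty$. One further step then gives $V\beta\psi\in L^p$, hence $\psi\in W^{1,p}$ with $\|\psi\|_{W^{1,p}}\le C$. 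Since $p>d$, Morrey's inequality yields continuity and $\|\psi\|_{L^\infty}\le C$. The only care needed is at the steps where the Sobolev exponent hits or crosses $d$. There we choose a slightly smaller exponent, using interpolation with the $L^2$ bound, so that every exponent stays in $(1,\infty)$. All constants depend only on $\alpha, \lambda_{\max}, d, p, m$.

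For the lower bounds, we use the identity again, which this time must carry the normalization. Since $\|\psi\|_{L^2} = 1$, we have
\[
    1 = \|(\Dirac_m-\lambda)^{-1}V\beta\psi\|_{L^2} \le \frac{C}{m-\lambda_{\max}}\,\|V\psi\|_{L^{2}}\le C'\alpha\,\|\psi\|_{L^{s}},
\]
with $1/s = 1/2 - 1/p$. The first inequality holds because the multiplier is bounded by $(m-\lambda_{\max})^{-1}$ on $L^2$. This bound is too crude when $p$ is large, so instead we use the sharper mapping $L^{r_0}\to L^2$ from the Sobolev gain, which is valid since $1/r_0 - 1/2 = 1/p < 1/d$. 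This gives $1 \le C\|V\psi\|_{L^{r_0}}$; to get a lower bound we need a norm of $\psi$ with exponent strictly above $2$, interpolated between $L^2$ and $L^\infty$. Hölder gives $\|V\psi\|_{L^{r_0}}\le \alpha\|\psi\|_{L^2}=\alpha$, which carries no information, so we refine by splitting into a large-$|V|$ part and a small-$|V|$ part. This is the main obstacle. What we would try first: write $V = V\1_{|V|\le\varepsilon} + V\1_{|V|>\varepsilon}$. The first piece contributes at most $C\varepsilon(m-\lambda_{\max})^{-1}$ to the $L^2$ norm. Choosing $\varepsilon$ small forces the second piece to contribute at least $1/2$. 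That piece is supported on a set of measure at most $\alpha^p\varepsilon^{-p}$, so Hölder on this set gives $1/2\le C\alpha\,|\{|V|>\varepsilon\}|^{\theta}\|\psi\|_{L^\infty}$ for some $\theta>0$. This yields $\|\psi\|_{L^\infty}\ge c$.

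Finally, $\|\psi\|_{L^\infty}\ge c$ gives a point $x$ with $|\psi(x)|\ge c/2$, and the uniform Hölder continuity from $\|\psi\|_{W^{1,p}}\le C$ (Morrey) gives $|\psi|\ge c/4$ on a ball $B_\rho(x)$ with $\rho$ depending only on $c,C$. Therefore $\|\psi\|_{L^2(B_1(x))}\ge c\rho^{d/2}/4$, after shrinking $c$.
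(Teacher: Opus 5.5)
For $p>2$ your bootstrap for the upper bounds is the paper's argument. There is, however, a genuine gap in the case $d=1$, $1<p\le 2$, which the statement covers because only $p>d$ is assumed.

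Your bootstrap starts from $V\beta\psi\in L^{r_0}$ with $1/r_0=1/p+1/2$ and applies the Mikhlin bound $L^{r_0}\to W^{1,r_0}$. This needs $r_0>1$, i.e.\ $p>2$. When $p\le 2$ you have $r_0\le 1$, so the very first step fails. Your remark about keeping every exponent in $(1,\infty)$ only concerns the upper end, where the Sobolev exponent reaches $d$. Interpolation with the $L^2$ bound cannot put $V\psi$ in any $L^r$ with $r>1$ unless you already control $\psi$ in some $L^t$ with $t>p'\ge 2$, which is what you are trying to prove. The $H^{1/2}$ information you set aside could be used, but only after proving a \emph{uniform} $H^{1/2}$ bound, which is an extra argument.

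The paper handles this case separately, using the ODE structure in one dimension. The eigenvalue equation gives $|\psi'|\le(2m+|V|)|\psi|$, and $\int_I|V|\le\alpha$ on any interval $I$ of length one. Gr\"onwall then yields $\sup_I|\psi|\le e^{2m+\alpha}\inf_I|\psi|\le e^{2m+\alpha}\|\psi\|_{L^2(I)}\le e^{2m+\alpha}$. After that, $V\psi\in L^p$ and the resolvent gives the $W^{1,p}$ bound.

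The same case also breaks your lower bound as written. The chain $1\le C'\alpha\|\psi\|_{L^s}$ with $1/s=1/2-1/p$ is meaningless for $p<2$. In the splitting argument, $\|V\1_{|V|>\varepsilon}\|_{L^2}$ is not controlled by its $L^p$ norm and the measure of its support when $p<2$.

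For $p\ge 2$, on the other hand, your first chain already concludes. Indeed $\|\psi\|_{L^s}\le\|\psi\|_{L^\infty}^{2/p}\|\psi\|_{L^2}^{1-2/p}$, so $\|\psi\|_{L^\infty}\ge\bigl((m-\lambda_{\max})/(C\alpha)\bigr)^{p/2}$. That bound is not too crude for large $p$, and the splitting detour is unnecessary.

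The paper avoids any case distinction with a duality trick:
$1=\langle(\Dirac_m-\lambda)^{-1}\psi,V\beta\psi\rangle\le \alpha\|(\Dirac_m-\lambda)^{-1}\psi\|_{L^{2p'}}\|\psi\|_{L^{2p'}}\le C\alpha\|\psi\|_{L^{2p'}}^2\le C\alpha\|\psi\|_{L^\infty}^{2/p}$.
This is valid for every $p>1$, since $2p'<\infty$.

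Your final step, using uniform H\"older continuity from Morrey to get $\|\psi\|_{L^2(B_1(x))}\ge c$, is the same as the paper's.
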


\begin{proof}
    \noindent \underline{Upper bounds.} From the equation $λ ψ = (\Dirac_m - V \beta) \psi$, we get $\psi = (\Dirac_m - \lambda)^{-1} V \beta \psi$. It is a classical result that $(\Dirac_m - \lambda)^{-1}$ is a bounded operator from $L^s(\R^d)$ to $W^{1,s}(\R^d)$ for all $1 < s < \infty$, with an operator bound independent of $λ ∈ [-\lambda_{\max}, \lambda_{\max}]$ (this comes from Mikhlin multiplier theorem).
    
    Let us first prove that $ψ ∈ L^\infty$. Assume first $p > 2$ (which is always the case if $d \ge 2$). We proceed by induction. We have $ψ ∈ L^{t_0}$ with $t_0 = 2$. If $ψ ∈ L^{t_n}$ for some $t_n \ge 2$, then Hölder's inequality gives $V ψ ∈ L^s$ for all $s ∈ [s_0, s_n]$ with 
    \[
        \frac{1}{s_0} = \frac{1}{p} + \frac{1}{2}, \quad \frac{1}{s_n} = \frac{1}{p} + \frac{1}{t_n},
    \]
    and therefore $\psi \in W^{1,s}$ (note that $s_0 > 1$ since we assumed $p > 2$). In the case where $\frac{1}{t_n} < \frac{1}{d} - \frac{1}{p}$, Morrey's embedding shows that $ψ ∈ L^\infty$ and that $ψ$ is continuous, and we are done. Otherwise, Sobolev's embedding gives $ψ ∈ L^{t_{n+1}}$ with
    \[
        \frac{1}{t_{n+1}} = \frac1s - \frac1d = \frac1{t_n} - \Big( \frac1d - \frac1p \Big) = \frac{1}{t_0} - (n+1) \Big( \frac1d - \frac1p \Big).
    \]
    After finitely many steps, the condition $\frac{1}{t_n} < \frac{1}{d} - \frac{1}{p}$ is satisfied, and we end up with $ψ ∈ L^\infty$ continuous. In addition, since the number of steps is independent of $V$, we obtain $\| ψ \|_{L^\infty} \le C$ independent of $V$ with $\| V \|_{L^p} \le α$. Iterating one more time shows that $ψ ∈ W^{1,p}$ as claimed.
    
    \medskip

    We now focus on the case $1 < p \le 2$, which can only happen if $d = 1$. The eigenvalue equation reads $\psi' = \ri \sigma_1 (λ - (m - V) \sigma_3 ) \psi$, so $| \psi' | \le (2 m + | V |) | ψ |$. On an interval $I$ of length $1$, Grönwall's lemma gives
    \[
        \sup_I | ψ | \le \re^{2m + α} \inf_I | ψ | \le \re^{2m + α} \| ψ \|_{L^2(I)} \le \re^{2m + α}.
    \]
    We deduce that $\| ψ \|_{L^\infty} \le \re^{2m + α}$, and conclude as before that $ψ ∈ W^{1,p}$ and that $\psi$ is continuous.

    \medskip

    \noindent \underline{Lower bound.} 
    Using the eigenvalue equation and Hölder's inequality, we get
    \begin{align*}
    1 & = \| \psi \|_{L^2}^2 = ⟨ ψ, ψ ⟩ = ⟨ ψ, (\Dirac_m - λ)^{-1} V β ψ ⟩ = ⟨ (\Dirac_m - λ)^{-1} ψ, V β ψ ⟩ \\
    & \le \| V \|_{L^p} \| (\Dirac_m - λ)^{-1} ψ \|_{L^{2p'}} \| ψ \|_{L^{2p'}}
    \le C \| V \|_{L^p} \| ψ \|_{L^{2p'}}^2,
    \end{align*}
    where we used that $(\Dirac_m - \lambda)^{-1}$ is a bounded operator from $L^s$ to $L^s$ for all $1 < s < \infty$. The lower bound for the $L^\infty$-norm follows from the interpolation inequality
    \[
        \| ψ \|_{L^{2p'}} \le \| ψ \|_{L^\infty}^{\frac{1}{p}} \| ψ \|_{L^2}^{\frac{1}{p'}} = \| ψ \|_{L^\infty}^{\frac{1}{p}}.
    \]
    Finally, since $\nrm{\psi}{W^{1,p}} \le C$ with $p > d$, Morrey's inequality shows that $\psi$ is of Hölder class $C^{0, \alpha}$ for some $α > 0$, with Hölder constant independent of $V$. So, if $|\psi(x)| \ge \tilde{c}$, then $|\psi| \ge \tilde{c}/2$ on a ball $B_\delta(x)$, with $\delta > 0$ independent of $V$. This gives the lower bound on $\nrm{\psi}{L^2(B_1(x))}$.
\end{proof}

The lower bound prevents eigenfunctions from vanishing. The uniform upper bounds allow us to extract converging subsequences. We obtain the following compactness result.

\begin{lemma}[Compactness] \label{lem:compactness}
    Let $(V_n)$ be a sequence of potentials with $\nrm{V_n}{L^p} \le α$, and let $\lambda_n$ be an eigenvalue of $\Dirac_m - V_n \beta$, with $|\lambda_n| \le \lambda_{\max} < m$ and $\lambda_n \to \lambda$. Then, up to a subsequence and up to translations, $(V_n)$ converges weakly in $L^p$ to some $V$ such that $\lambda$ is an eigenvalue of $\Dirac_m - V\beta$. In particular, 
    \[
        \nrm{V}{L^p} \le \liminf_{n \to \infty} \nrm{V_n}{L^p} \le α \qquad \text{and} \qquad \lambda_1(V) \le |\lambda|.
    \]
\end{lemma}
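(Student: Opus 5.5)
We would pick, for each $n$, a normalized eigenfunction $\psi_n$ of $\Dirac_m - V_n\beta$ with eigenvalue $\lambda_n$. Lemma~\ref{lem:regularity_eigenfunctions} applies with the fixed $\lambda_{\max}$ and $\alpha$. It gives uniform bounds $\nrm{\psi_n}{W^{1,p}} \le C$ and $\nrm{\psi_n}{L^\infty}\le C$. It also gives a point $x_n$ with $\nrm{\psi_n}{L^2(B_1(x_n))} \ge c$.

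We translate both $V_n$ and $\psi_n$ by $x_n$. Translations commute with $\Dirac_m$ and preserve all norms, so we may assume $x_n = 0$. We then extract a subsequence with $V_n \rightharpoonup V$ weakly in $L^p$ (possible since $p>1$ and the sequence is bounded), $\psi_n \rightharpoonup \psi$ weakly in $L^2$ and in $W^{1,p}$, and $\psi_n \to \psi$ locally uniformly. The last convergence follows from Rellich--Kondrachov, or from Arzel\`a--Ascoli using the uniform H\"older bound coming from Morrey's inequality since $p>d$. Local strong convergence gives $\nrm{\psi}{L^2(B_1)} \ge c > 0$, so $\psi \neq 0$. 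This is the key point where the translation is needed to prevent vanishing.

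Next we pass to the limit in the equation in the sense of distributions. For a test function $\varphi \in C^\infty_c$ we have
\[
\langle \psi_n, (\Dirac_m - \lambda_n)\varphi\rangle = \langle V_n \beta \psi_n, \varphi\rangle .
\]
The left-hand side converges by weak convergence of $\psi_n$ together with $\lambda_n\to\lambda$. On the right-hand side, $\psi_n \to \psi$ strongly in $L^{p'}(\operatorname{supp}\varphi)$ by local uniform convergence, while $V_n \rightharpoonup V$ weakly in $L^p$. This weak-times-strong product converges to $\langle V\beta\psi,\varphi\rangle$.

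Hence $(\Dirac_m - V\beta)\psi = \lambda\psi$ in the sense of distributions, with $\psi \in L^2\cap L^\infty \cap W^{1,p}$. To conclude that $\psi \in \Dom(H_V)$, we use Proposition~\ref{prop:selfadjoint}. First, $|V|^{1/2}\psi \in L^2$, because $\psi\in L^2\cap L^\infty$ lies in $L^{2p'}$ and $V \in L^p$, so $|V||\psi|^2 \in L^1$. Second, $(\Dirac_m - V\beta)\psi = \lambda\psi \in L^2$. Therefore $\lambda$ is an eigenvalue of $\Dirac_m - V\beta$.

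Weak lower semicontinuity of the norm gives $\nrm{V}{L^p}\le\liminf_n \nrm{V_n}{L^p}\le\alpha$. Finally, since $\lambda$ belongs to the spectrum, the definition~\eqref{eq:def:lambda1} yields $\lambda_1(V) \le |\lambda|$.

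The main obstacle is the non-vanishing of the limit, i.e. ensuring $\psi\neq0$. It is handled by the lower bound on $\nrm{\psi_n}{L^2(B_1(x_n))}$ from Lemma~\ref{lem:regularity_eigenfunctions}, combined with the translation and local compactness. A minor subtlety is the domain question when $p<2$ (only possible for $d=1$). There we use the distributional domain characterization of Proposition~\ref{prop:selfadjoint} rather than $H^1$, which is why we check the two domain conditions above directly.
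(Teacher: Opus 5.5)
Your proposal is correct and follows the paper's proof essentially step by step: you translate so that the ball carrying $L^2$-mass $\ge c$ is centered at the origin, extract weak limits in $L^p$ and $W^{1,p}$ with local uniform convergence, and pass to the limit in the distributional eigenvalue equation by a weak-times-strong argument. Your explicit check that $\psi \in \Dom(H_V)$, via $|V|^{1/2}\psi \in L^2$ from $\psi \in L^2 \cap L^\infty$, is a small detail the paper leaves implicit but does not change the argument.
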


\begin{proof}
    Let $\psi_n$ be a normalized eigenfunction of $\Dirac_m - V_n\beta$ with eigenvalue $\lambda_n$. Lemma~\ref{lem:regularity_eigenfunctions} shows that there is $x_n \in \R^d$ with $\nrm{\psi_n}{L^2(B_1(x_n))} \ge c$. Up to translations of $V_n$, we may assume that $x_n = 0$, that is $\nrm{\psi_n}{L^2(B_1)} \ge c$. 
    
    The sequence $V_n$ is bounded in $L^p$ and the sequence $\psi_n$ is bounded in $W^{1,p}$. Applying the Banach--Alaoglu theorem, we have, up to a subsequence,
    \[
        V_n \rightharpoonup V \quad \text{weakly in } L^p \qquad \text{and} \qquad \psi_n \rightharpoonup \psi \quad \text{weakly in } W^{1,p}.
    \]
    In particular, for any compact set $K$, we have $\psi_n → \psi$ uniformly on $K$, by the compact embedding $W^{1,p}(K) \hookrightarrow C^0(K)$ (recall that $p > d$).

    Consider a test function $χ ∈ C^\infty_0(\R^d, \C^N)$, and let $K$ be a compact set containing the support of $\chi$. Then
    \[
        0 = ⟨ \chi, (\Dirac_m - V_n β - \lambda_n)  \psi_n ⟩ = ⟨ \Dirac_m \chi,  \psi_n ⟩ - \int_{K} V_n ⟨ β \chi, \psi_n ⟩_{\C^N} - \lambda_n ⟨ \chi, \psi_n ⟩.
    \]
    The first term converges to $⟨ \Dirac_m \chi, ψ ⟩$ by the weak convergence of $\psi_n$ to $\psi$ in $L^2$. Similarly, the third term converges to $\lambda ⟨ \chi, \psi ⟩$. For the middle term, we use that $V_n \rightharpoonup V$ weakly in $L^p$ while $⟨ β \chi, \psi_n ⟩_{\C^N} → ⟨ β \chi, \psi ⟩_{\C^N}$ strongly in $L^{p'}(K)$. Taking the limit shows that
    \[
    0 = ⟨ (\Dirac_m - V β - \lambda) \chi, ψ ⟩ = ⟨ \chi, (\Dirac_m - V β - \lambda) ψ ⟩.
    \]
    Since this holds for all $χ ∈ C^\infty_0(\R^d, \C^N)$, we deduce that $(\Dirac_m - V β - \lambda) ψ = 0$ in the distributional sense. On the other hand, we have $ψ ∈ L^2$ and $\nrm{\psi}{L^2(B_1)} \ge c$, by the strong convergence of $\psi_n$ to $\psi$ in $L^2(B_1)$. So $\psi$ is an eigenfunction of $\Dirac_m - V β$, hence $λ ∈ σ (\Dirac_m - V \beta)$. Since $\lambda_1(V)$ is the smallest eigenvalue in magnitude, we get as wanted $\lambda_1(V) \le | λ |$.
\end{proof}

As a consequence, we deduce the existence of optimizers for our Keller problem. 

\begin{proposition}[Existence of optimal potentials] \label{prop:existence}
    For all $\alpha > 0$, there is $V \in L^p(\R^d, \R)$ with $\nrm{V}{L^p} \le \alpha$ and $\lambda_1(V) = \Lambda(\alpha)$. In addition, $\Lambda(\alpha)$ is an eigenvalue of $\Dirac_m - V\beta$.
\end{proposition}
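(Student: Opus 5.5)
The plan is a direct minimizing-sequence argument that uses Lemma~\ref{lem:compactness}. Fix $\alpha > 0$ and pick potentials $V_n$ with $\nrm{V_n}{L^p} \le \alpha$ and $\lambda_1(V_n) \to \Lambda(\alpha)$. By Lemma~\ref{lem:Lambda_first_properties}, $\Lambda(\alpha) < m$. Hence there is $\lambda_{\max} \in (\Lambda(\alpha), m)$ with $\lambda_1(V_n) \le \lambda_{\max}$ for all $n$ large.

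The first step is to check that $\lambda_1(V_n)$, or $-\lambda_1(V_n)$, is genuinely an eigenvalue of $\Dirac_m - V_n\beta$. By definition~\eqref{eq:def:lambda1}, $\lambda_1(V_n) = \dist(0, \sigma(H_{V_n}))$. Since the distance is attained on the closed set $\sigma(H_{V_n})$, some $\lambda_n \in \sigma(H_{V_n})$ satisfies $|\lambda_n| = \lambda_1(V_n) < m$. We then need to know that spectrum of $H_{V_n}$ inside $(-m,m)$ is discrete, i.e.\ consists of eigenvalues.
\begin{itemize}
\item For $p \ge 2$ this follows from the equality $\sigma_{\rm ess}(H_V) = \sigma_{\rm ess}(\Dirac_m)$ in Proposition~\ref{prop:selfadjoint}.
\item For $d = 1$ and $1 < p < 2$, I would use the Birman--Schwinger principle. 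I expect $K_V(z)$ to be compact on $(-m,m)$ (as the paper asserts), analytic in $z$, and small for $|\mathrm{Im}\, z|$ large. The analytic Fredholm theorem then shows that $\sigma(H_V) \cap (-m,m)$ consists of isolated eigenvalues.
\end{itemize}
This technical point is where I expect the only real care to be needed.

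Extracting a subsequence, we may assume $\lambda_n \to \lambda$ with $|\lambda| = \Lambda(\alpha)$. Apply Lemma~\ref{lem:compactness} to the sequence $(V_n, \lambda_n)$. Up to translations (which do not change $\lambda_1$ or the $L^p$ norm) and a subsequence, we get a weak limit $V$ with $\nrm{V}{L^p} \le \alpha$ such that $\lambda$ is an eigenvalue of $\Dirac_m - V\beta$. In particular $\lambda_1(V) \le |\lambda| = \Lambda(\alpha)$. Since $V$ is admissible, $\lambda_1(V) \ge \Lambda(\alpha)$. Hence $\lambda_1(V) = \Lambda(\alpha)$, and $V$ is an optimal potential.

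It remains to show that $+\Lambda(\alpha)$ itself, and not only $-\Lambda(\alpha)$, is an eigenvalue of $\Dirac_m - V\beta$. We know $\lambda = \pm\Lambda(\alpha)$ is an eigenvalue. If $d \not\equiv 0 \pmod 4$, the charge-conjugation symmetry of Proposition~\ref{prop:symmetries} makes the spectrum symmetric, so $\Lambda(\alpha)$ is an eigenvalue. If $d \equiv 0 \pmod 4$ and $\lambda = -\Lambda(\alpha)$, we replace $V$ by its reflection $\widetilde V$ as in Remark~\ref{rem:prob_sym}. The unitary $\alpha_1 P_1$ maps eigenvectors of $H_V$ with eigenvalue $-\Lambda(\alpha)$ to eigenvectors of $H_{\widetilde V}$ with eigenvalue $\Lambda(\alpha)$. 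It also preserves the $L^p$ norm and the value $\lambda_1$. Thus $\widetilde V$ is an optimizer for which $\Lambda(\alpha)$ is an eigenvalue.
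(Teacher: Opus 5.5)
Your proof is correct and follows the paper's argument: take a minimizing sequence, apply Lemma~\ref{lem:compactness} to obtain an admissible weak limit $V$ with $\lambda_1(V) \le \Lambda(\alpha)$, and get the reverse inequality from the definition of $\Lambda(\alpha)$. You also handle two points the paper's proof leaves implicit, and handle them correctly: that $\pm\lambda_1(V_n)$ is a genuine eigenvalue because the spectrum in $(-m,m)$ is discrete (including $d=1$, $1<p<2$, which Proposition~\ref{prop:selfadjoint} does not cover), and that the sign can be fixed, via $C$ or the reflection $\alpha_1 P_1$ of Remark~\ref{rem:prob_sym}, so that $+\Lambda(\alpha)$ is an eigenvalue.
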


\begin{proof}
    Let $(V_n)$ be a minimizing sequence, with $\nrm{V_n}{L^p} \le \alpha$ and $\lambda_1(V_n) \to \Lambda(\alpha)$. Since $\Lambda(\alpha) < m$ by Lemma~\ref{lem:Lambda_first_properties}, we may assume that $\lambda_1(V_n) \le \lambda_{\max} < m$ for all $n$. Applying Lemma~\ref{lem:compactness} provides $V$ with $\nrm{V}{L^p} \le \alpha$ and $\lambda_1(V) \le \Lambda(\alpha)$. By definition of $\Lambda(\alpha)$, we have $\Lambda(\alpha) \le \lambda_1(V)$, hence the equality $\lambda_1(V) = \Lambda(\alpha)$.
\end{proof}

\subsection{The Euler--Lagrange equation}

We now discuss the properties of the optimizers $V$. Let us first write a {\em virial identity}.

\begin{proposition}[Virial identity]
    \label{prop:virial}
    Let $\alpha > 0$ be such that $\Lambda(\alpha) > 0$, $V$ be an optimal potential for $\Lambda(\alpha)$, and $\psi$ be a normalized eigenfunction of $\Dirac_m - V\beta$ with eigenvalue $\Lambda(\alpha)$. Then,
    \[
        ⟨ \psi, \Dirac_0 \psi ⟩  = \frac{d}{p} \int_{\R^d} V ⟨ \psi, β \psi ⟩_{\C^N}.
    \]
\end{proposition}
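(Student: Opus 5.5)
The plan is to exploit optimality of $V$ under a dilation that preserves the $L^p$ norm, in the spirit of the scaling argument in the proof of Lemma~\ref{lem:Lambda_first_properties}. For $s>0$, set
\[
    W_s(x) := s^{d/p} V(sx), \qquad \psi_s(x) := s^{d/2}\psi(sx),
\]
so that $\nrm{W_s}{L^p} = \nrm{V}{L^p} \le \alpha$ and $\|\psi_s\|_{L^2} = 1$. By optimality of $V$ and the variational characterization of $\lambda_1$, I will use
\[
    \Lambda(\alpha)^2 \le \lambda_1(W_s)^2 \le f(s) := \|(\Dirac_m - W_s\beta)\psi_s\|_{L^2}^2 ,
\]
with equality at $s=1$, since $f(1) = \|(\Dirac_m - V\beta)\psi\|^2 = \Lambda(\alpha)^2$. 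Hence $s=1$ is a minimum of $f$, and I aim to show $f'(1)=0$.

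To compute $f$, I will use $(\Dirac_0\psi_s)(x) = s^{d/2}\, s\,(\Dirac_0\psi)(sx)$. Together with the eigenvalue equation $\Dirac_0\psi = \Lambda\psi - m\beta\psi + V\beta\psi$ (writing $\Lambda = \Lambda(\alpha)$), this gives
\[
    (\Dirac_m - W_s\beta)\psi_s = s^{d/2}\Big( s\Lambda\psi + m(1-s)\beta\psi + (s - s^{d/p}) V\beta\psi \Big)(s\,\cdot).
\]
Consequently $f(s)$ is the squared $L^2$ norm of a combination of the three fixed functions $\psi$, $\beta\psi$ and $V\beta\psi$, with coefficients smooth in $s$. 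This step requires $V\beta\psi \in L^2$ and $\psi \in \Dom(H_V)$. Both follow from Lemma~\ref{lem:regularity_eigenfunctions}, since $\psi \in L^\infty\cap L^2$ and $V \in L^p$; for $p\ge 2$ this is immediate, and for $1<p<2$ (only $d=1$) I will still use that $V\beta\psi = \Dirac_m\psi - \Lambda\psi$, which lies in $L^2$ because $\psi\in\Dom(H_V)$. I expect this justification of the domain and integrability, though mild, to be the only technical point. Once it is settled, $f$ is differentiable.

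Differentiating at $s=1$, where the vector equals $\Lambda\psi$, I obtain
\[
    0 = f'(1) = 2\Lambda\, \mathrm{Re}\Big\langle \psi, \Lambda\psi - m\beta\psi + \big(1-\tfrac dp\big)V\beta\psi \Big\rangle .
\]
Since $\Lambda(\alpha) > 0$ by assumption, I can divide by $2\Lambda$. Finally, I will take the inner product of the eigenvalue equation with $\psi$, which gives $\Lambda = \langle\psi,\Dirac_0\psi\rangle + m\langle\psi,\beta\psi\rangle - \int V\langle\psi,\beta\psi\rangle_{\C^N}$; all these quantities are real. Substituting this expression for $\Lambda$ cancels the mass terms and yields $\langle\psi,\Dirac_0\psi\rangle = \frac dp\int V\langle\psi,\beta\psi\rangle_{\C^N}$, as claimed. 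The hypothesis $\Lambda(\alpha)>0$ is used exactly in the division step, which would fail at a zero mode.
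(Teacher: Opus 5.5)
Your argument is essentially the paper's proof, and it is correct when $p \ge 2$. Both use the same $L^p$-norm-preserving dilation $s^{d/p}V(s\,\cdot)$ with the dilated eigenfunction $U_s\psi$ as trial state, and both read off the first-order condition at $s=1$. The only difference is cosmetic: you insert the eigenvalue equation before differentiating, while the paper expands $\Dirac_0\psi$ and $V\beta\psi$ directly.

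One step needs correcting, in the case $d=1$, $1<p<2$. You argue that $V\beta\psi = \Dirac_m\psi - \Lambda\psi \in L^2$ because $\psi \in \Dom(H_V)$, but this is circular. Membership in $\Dom(H_V)$ only gives $(\Dirac_m - V\beta)\psi \in L^2$, not $\Dirac_m\psi \in L^2$, and the paper notes that the domain is not $H^1$ in this case. For a general $V \in L^p$ with $p<2$ one has $V\psi \notin L^2$. Then $\psi_s \notin \Dom(H_{W_s})$ for $s \neq 1$, because the coefficient $s - s^{d/p}$ of $V\beta\psi$ does not vanish.

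The paper's proof needs the same integrability and does not address it either. So in this corner case an extra argument is required, for instance first showing that an optimal $V$ is bounded using only bounded perturbations $V + tW$ with $W \in L^\infty \cap L^p$.
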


\begin{proof}
    For $s > 0$, we set $V_s(x) := s^{\frac{d}{p}} V(sx)$, so that $\| V_s \|_{L^p} = \| V \|_{L^p} \le \alpha$, hence $\lambda_1(V_s) \ge \Lambda(\alpha)$. We also introduce the scaling unitary operator $(U_s \psi)(x) := s^{d/2} \psi(sx)$. A computation shows that
    \[
        U_s^* (\Dirac_m - V_s \beta) U_s = s \Dirac_0 + m \beta - s^{d/p} V \beta.
    \]
    With $\psi_s := U_s \psi$ as a test function, writing $s = 1 + \eta$ and $\Lambda := \Lambda(\alpha)$, and using that $(1 + \eta)^{d/p} = 1 + \frac{d}{p} \eta + O(\eta^2)$, we obtain
    \begin{align*}
        Λ^2 & \le \| ((\Dirac_m - V_s \beta)) \psi_s \|_{L^2}^2
        = \| Λ ψ + (\eta \Dirac_0 - [(1 + \eta)^{d/p} - 1] V β) ψ \|_{L^2}^2  \\
        & \le \Lambda^2 + 2 Λ \eta \left( ⟨ \psi, \Dirac_0 ψ ⟩ - \frac{d}{p}⟨ \psi, V β ψ ⟩ \right)  + O(\eta^2).
    \end{align*}
    Note that the term in parenthesis is real-valued. The claim follows since this inequality must hold for all $\eta$ small enough, with any sign. 
\end{proof}

We can now derive the Euler--Lagrange equations.

\begin{proposition}[Euler--Lagrange equations] \label{prop:EL}
    Let $\alpha > 0$ be such that $\Lambda(\alpha) > 0$, let $V$ be an optimal potential for $\Lambda(\alpha)$, and let $\psi$ be a normalized eigenfunction of $\Dirac_m - V\beta$ with eigenvalue $\Lambda(\alpha)$. Then $\sprod{\psi}{\beta\psi} \not\equiv 0$, we have $\nrm{V}{L^p} = \alpha$, and there is $\mu > 0$ such that
    \[
        \mu \, |V|^{p-2}V = \sprod{\psi}{\beta\psi}.
    \]
    In particular, $ψ ∈ C^1(\R^d, \C^N)$ and $V ∈ C^0(\R^d, \R)$ is bounded.
\end{proposition}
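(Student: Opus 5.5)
The plan is a first-order perturbation argument in the potential, in the spirit of the virial identity (Proposition~\ref{prop:virial}), combined with a Lagrange multiplier argument for the $L^p$ constraint. Fix a real perturbation $W \in L^p(\R^d,\R) \cap L^\infty$ (compactly supported, say) and set $V_t := V + tW$. Using $\psi$ as a test function in the characterization $\lambda_1(V_t) = \inf \|(\Dirac_m - V_t\beta)\phi\|_{L^2}$ (legitimate since $\psi \in \Dom(H_V)$ is bounded and $W$ is bounded, so $\psi \in \Dom(H_{V_t})$), we get, with $\Lambda := \Lambda(\alpha)$,
\[
\lambda_1(V_t)^2 \le \|\Lambda\psi - tW\beta\psi\|_{L^2}^2 = \Lambda^2 - 2t\Lambda \int_{\R^d} W \sprod{\psi}{\beta\psi} + t^2 \|W\psi\|_{L^2}^2 .
\]
Writing $\rho := \sprod{\psi}{\beta\psi}$ (real, and in $L^{p'}$ since $\psi \in L^2\cap L^\infty$), we see that whenever $\int W\rho > 0$, small $t>0$ gives $\lambda_1(V_t) < \Lambda$.

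\textbf{Step 1: $\rho \not\equiv 0$.} If $\rho \equiv 0$, then $\int V\rho = 0$ and the virial identity gives $\langle \psi, \Dirac_0\psi\rangle = 0$. I would then combine this with the eigenvalue equation: $\Lambda = \langle\psi, H_V\psi\rangle = \langle \psi,\Dirac_0\psi\rangle + m\int\rho - \int V\rho = 0$, contradicting $\Lambda > 0$. This is cheap, so Step 1 is fine.

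\textbf{Step 2: saturation $\|V\|_{L^p}=\alpha$.} If $\|V\|_{L^p} < \alpha$, take $W$ bounded with compact support and $\int W\rho > 0$ (possible since $\rho\not\equiv 0$). Then $\|V_t\|_{L^p}\le\alpha$ for small $t$ while $\lambda_1(V_t) < \Lambda$, a contradiction.

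\textbf{Step 3: Lagrange multiplier.} Now $\|V\|_{L^p}=\alpha$. The map $t\mapsto \|V+tW\|_{L^p}^p$ is differentiable with derivative $p\int |V|^{p-2}VW$ at $0$ (valid for $p>1$). If $\int |V|^{p-2}VW < 0$ and $\int W\rho > 0$, then $V_t$ is admissible for small $t>0$ and strictly decreases $\lambda_1$, a contradiction. More robustly, I would use the rescaled competitor $\alpha V_t/\|V_t\|_{L^p}$: its eigenvalue differs from that of $V_t$ only at second order in the direction with $\int|V|^{p-2}VW = 0$. Justifying this needs a Lipschitz bound of $\lambda_1$ under multiplication of $V$ by scalars near $1$; again take $\psi$ as test function to get $\lambda_1(cV_t)\le \|(\Lambda - (c-1)V\beta - ctW\beta)\psi\|$, which is fine since $V\psi\in L^2$. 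Expanding to first order gives: for every $W$ with $\int|V|^{p-2}VW=0$, we have $\int W\rho \le 0$, hence (replacing $W$ by $-W$) $\int W\rho=0$. Thus $\rho$ lies in the annihilator of the kernel of $W\mapsto\int|V|^{p-2}VW$, so $\rho = \mu|V|^{p-2}V$ for some $\mu\in\R$. Testing with $W = V$ in the first-order inequality, or using the sign of the multiplier from Step 2 (decreasing the norm direction), gives $\mu > 0$; $\mu \ne 0$ follows from $\rho\not\equiv 0$.

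\textbf{Step 4: regularity.} Since $\psi$ is continuous and bounded (Lemma~\ref{lem:regularity_eigenfunctions}), $|V|^{p-1} = |\rho|/\mu$ is continuous and bounded, so $V\in C^0\cap L^\infty$. Then $V\beta\psi$ is continuous and lies in $L^s$ for all $s\ge 2$, and bootstrapping $\psi=(\Dirac_m-\Lambda)^{-1}V\beta\psi$ gives $\psi\in W^{1,s}$ for all large $s$. To get genuine $C^1$, note that $V = \mu^{-1/(p-1)}f(\rho)$ is Hölder continuous, since $\rho\in W^{1,s}$ and $s\mapsto |s|^{1/(p-1)}\mathrm{sign}\,s$ is Hölder. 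Hence $V\beta\psi$ is Hölder, and Schauder-type estimates for the resolvent give $\psi\in C^{1,\gamma}$.

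\textbf{Main obstacle.} I expect the main obstacle to be the careful justification of Step 3, namely that the one-sided first-order expansion of $\lambda_1$ (only an upper bound is available) suffices. It does, because we only need upper bounds on competitors. The other delicate point is the domain issue for $p<2$ in $d=1$, which is handled by restricting to bounded $W$.
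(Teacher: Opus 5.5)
Your proof is correct and follows essentially the same route as the paper: you use $\psi$ as a test function for $V+tW$, the virial identity excludes $\sprod{\psi}{\beta\psi}\equiv 0$ and hence non-saturation, and $\mu$ comes from first-order optimality under the $L^p$ constraint. The paper phrases this last step as the KKT condition for the frozen-$\psi$ problem $\inf\{\|(\Dirac_m - W\beta)\psi\|_{L^2}^2,\ \|W\|_{L^p}\le\alpha\}$, while you carry it out by hand.

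Two minor remarks. First, your un-rescaled variant already suffices: use directions with $\int |V|^{p-2}VW<0$, and reach the kernel via $W-\varepsilon W_0$, where $\int|V|^{p-2}VW_0>0$ and $\varepsilon\to 0$. This avoids the claim $V\psi\in L^2$ used by the rescaled competitor, which is not automatic when $d=1$ and $p<2$. Second, your H\"older--Schauder step for $\psi\in C^1$ fills in a detail the paper only sketches, since its bootstrap alone gives $\psi\in W^{1,s}$ for all $s<\infty$.
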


\begin{proof}
    We write $λ := \Lambda(\alpha) = \lambda_1(V)$ for simplicity. Recall that we assume $λ > 0$.

    \noindent \textbf{Step 1: first-order condition.} Let $W \in L^\infty \cap L^p(\R^d, \R)$. Using $\psi$ as a test function, we have
    \begin{equation} \label{eq:first_order}
        \lambda_1(V + tW)^2 \le \nrm{(H_V - t W \beta) \psi}{L^2}^2 = \lambda^2 - 2 t \lambda \int_{\R^d} W \sprod{\psi}{\beta\psi} + t^2 \nrm{W \psi}{L^2}^2.
    \end{equation}
    Assume first that $\nrm{V}{L^p} < \alpha$. Then $\nrm{V + tW}{L^p} \le \alpha$ for all $t$ small enough, and $\lambda_1(V + tW) \ge \lambda$. We deduce that $\int W ⟨ \psi, \beta\psi⟩ = 0$ for all $W$, that is $\sprod{\psi}{\beta\psi} \equiv 0$. The virial identity Proposition~\ref{prop:virial} then shows that $⟨ \psi, \Dirac_0 ψ ⟩ = 0$. As a consequence,
    \[
        λ = ⟨ \psi, (\Dirac_m - V \beta) ψ ⟩ = ⟨ \psi, \Dirac_0 ψ ⟩ + m ⟨ \psi, β ψ ⟩ - ⟨ \psi, V β ψ ⟩ = 0,
    \]
    which is a contradiction, since we assumed $λ > 0$. So $\| V \|_{L^p} = \alpha$, as claimed. The same argument shows that $⟨ \psi, β ψ ⟩ \not\equiv 0$.

    \medskip
    
    The potential $V$ is an optimizer for the following minimization problem, where we fix the test function $\psi$:
    \[
        \inf \left\{ \| ( \Dirac_m - W β ) ψ \|_{L^2}^2, \quad \| W \|_{L^p}^p \le \alpha^p \right\}.
    \]
    Since $V$ saturates the constraints, it satisfies the Euler--Lagrange equations
    \[
        ⟨ ψ, β ψ ⟩_{\C^N} = μ | V |^{p-2} V, \qquad \text{for some} \quad μ > 0.
    \]
    The sign of $V$ is therefore pointwise the one of $⟨ ψ, β ψ ⟩_{\C^N}$. One can invert this equation and write
    \begin{equation} \label{eq:EL_bis}
        V = \mu^{\frac{-1}{p-1}} | ⟨ ψ, β ψ ⟩_{\C^N} |^{\frac{1}{p-1} - 1} ⟨ ψ, β ψ ⟩_{\C^N}.
    \end{equation}
    Since $\psi$ is continuous and bounded (see Lemma~\ref{lem:regularity_eigenfunctions}), so is $V$. Bootstrapping as in the proof of Lemma~\ref{lem:regularity_eigenfunctions} shows that $\psi$ is differentiable. Actually, since the function $s \mapsto |s|^{\frac{1}{p-1} - 1} s$ is smooth except at $s = 0$, $\psi$ and $V$ are smooth on the open set $\{ ⟨ ψ, β ψ ⟩_{\C^N} \neq 0 \}$.
\end{proof}

Taking the $p$ power of~\eqref{eq:EL_bis} and integrating shows that, with $p' = \frac{p}{p-1}$,
\[
    \mu^{p'} \alpha^p = \| ⟨ ψ, β ψ ⟩_{\C^N} \|_{L^{p'}}^{p'}, \quad \text{hence} \quad \mu = \alpha^{1-p} \, \| ⟨ ψ, β ψ ⟩_{\C^N} \|_{L^{p'}}.
\]

\begin{remark}
    From the virial identity and the Euler--Lagrange equations, we get
    \begin{equation} \label{eq:Pohozaev}
    \Lambda(\alpha) = ⟨ \psi, \Dirac_0 ψ ⟩ + m \int_{\R^d} ⟨ \psi, β ψ ⟩_{\C^N} - \int_{\R^d} V  ⟨ ψ, β ψ ⟩_{\C^N}
    = - \left(1 - \frac{d}{p} \right) μ \alpha^p + m \int_{\R^d}⟨ \psi, β ψ ⟩_{\C^N}.
    \end{equation}
    In particular, $\int_{\R^d}⟨ \psi, β ψ ⟩_{\C^N} > m^{-1} \Lambda(\alpha) > 0$.
\end{remark}

We were not able to prove that $V \ge 0$ pointwise.  This holds in dimension $d = 1$ (see Appendix~\ref{sec:appendix:explicit_1D}), but our simulations in dimension $d = 2$ suggest that it fails for large $\alpha$ (see Appendix~\ref{sec:appendix:numerics}). We conjecture that the optimal $V$ is positive for sufficiently small $\alpha$. This would follow from an a-priori bound $\nrm{V}{L^\infty} < m$ such that $m - V > 0$, in which case the lowest positive eigenvalue can be found by a variational problem for the upper component. 

\medskip

We now provide the proof of Theorem~\ref{thm:existence}.

\begin{proof}[Proof of Theorem~\ref{thm:existence}]
The properties of $\Lambda$ on $(0, \infty)$ are given in Lemma~\ref{lem:Lambda_first_properties}. The existence of optimal potentials for all $\alpha > 0$ is Proposition~\ref{prop:existence}. For $\alpha < \alpha_*$, Proposition~\ref{prop:EL} shows that optimal potentials are bounded and saturate the constraint, and gives the Euler--Lagrange equations.

\medskip

It remains to prove that $\Lambda$ is (strictly) decreasing on $(0, \alpha_*)$. Let $0 < \alpha < \alpha' < \alpha_*$, and assume that $\Lambda(\alpha) = \Lambda(\alpha')$. An optimal potential $V$ for $\alpha$ satisfies
\[
    \nrm{V}{L^p} \le \alpha < \alpha' \qquad \text{and} \qquad \lambda_1(V) = \Lambda(\alpha) = \Lambda(\alpha').
\]
So $V$ is also an optimal potential for $\alpha'$, which does not saturate the constraint. Since $\Lambda(\alpha') > 0$, this contradicts Proposition~\ref{prop:EL}.
\end{proof}

We conclude this section with the following characterization of the critical mass.

\begin{lemma}[The critical mass is attained] \label{lem:alpha_star}
    If $\alpha_*(p,d) < \infty$, there is $V \in L^p(\R^d, \R)$ with $\nrm{V}{L^p} = \alpha_*(p,d)$ such that $0$ is an eigenvalue of $\Dirac_m - V\beta$.
    Conversely, $\alpha_*(p, d) = + \infty$ if and only if $\Dirac_m - V\beta$ has no zero mode for all $V \in L^p(\R^d, \R)$.
\end{lemma}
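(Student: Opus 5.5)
Assume $\alpha_* := \alpha_*(p,d) < \infty$. The plan is to combine the continuity of $\Lambda$ with the compactness result, Lemma~\ref{lem:compactness}, applied at the level $\lambda = 0$.

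First, by Lemma~\ref{lem:Lambda_first_properties}, $\alpha \mapsto \Lambda(\alpha)$ is continuous and nonincreasing. By definition of $\alpha_*$ as an infimum, there is a sequence $\alpha_n \downarrow \alpha_*$ (or $\alpha_n = \alpha_*$) with $\Lambda(\alpha_n) = 0$. Continuity then gives $\Lambda(\alpha_*) = 0$.

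By Proposition~\ref{prop:existence}, there is $V$ with $\nrm{V}{L^p} \le \alpha_*$ and $\lambda_1(V) = \Lambda(\alpha_*) = 0$, and $0$ is an eigenvalue of $\Dirac_m - V\beta$. This is exactly the compactness argument of Lemma~\ref{lem:compactness} with $\lambda_n = \lambda_1(V_n) \to 0 \le \lambda_{\max}$. If instead one prefers to argue directly, take $V_n$ with $\nrm{V_n}{L^p} \le \alpha_*$ and $\lambda_1(V_n) \to 0$. Since $\Lambda(\alpha_*) < m$ and the discrete eigenvalues realize $\lambda_1(V_n)$ (Lemma~\ref{lem:Lambda_first_properties} gives $\lambda_1 < m$ once $n$ is large), Lemma~\ref{lem:compactness} yields a weak limit $V$ with $0$ an eigenvalue.

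It remains to show $\nrm{V}{L^p} = \alpha_*$. If $\nrm{V}{L^p} = \beta < \alpha_*$, then $\Lambda(\beta) \le \lambda_1(V) = 0$, so $\Lambda(\beta) = 0$ with $\beta < \alpha_*$. This contradicts the definition of $\alpha_*$. Note that $\beta > 0$ is needed here: indeed $V = 0$ has no zero mode since $\lambda_1(0) = m$. Hence $\nrm{V}{L^p} = \alpha_*$.

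For the converse, suppose first $\alpha_* = +\infty$ and some $V$ has a zero mode. Then $\Lambda(\nrm{V}{L^p}) = 0$, so $\alpha_* \le \nrm{V}{L^p} < \infty$, a contradiction. Conversely, if $\alpha_* < \infty$, the first part produces a potential with a zero mode.

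The only subtle point is ensuring the eigenvalue $\lambda_1(V_n)$ is genuinely an eigenvalue in the discrete spectrum, so that Lemma~\ref{lem:compactness} applies. This follows because $\lambda_1(V_n) < m$ lies in the gap, where for $p \ge 2$ the spectrum is discrete (Proposition~\ref{prop:selfadjoint}); for $d = 1$ and $p < 2$ one uses Proposition~\ref{prop:existence} directly, which already asserts $\Lambda(\alpha)$ is an eigenvalue.
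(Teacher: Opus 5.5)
Your proof is correct and follows essentially the same route as the paper: continuity of $\Lambda$ gives $\Lambda(\alpha_*)=0$, Proposition~\ref{prop:existence} supplies a $V$ with $\nrm{V}{L^p}\le\alpha_*$ for which $0$ is an eigenvalue, the definition of $\alpha_*$ as an infimum forces $\nrm{V}{L^p}=\alpha_*$, and the converse is the same one-line argument. The detour through Lemma~\ref{lem:compactness} and the discussion of discreteness of the spectrum in the gap are unnecessary, since Proposition~\ref{prop:existence} already asserts that $\Lambda(\alpha)$ is an eigenvalue.
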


\begin{proof}
    If $\alpha_* < \infty$, we have $\Lambda(\alpha_*) = 0$ by continuity. Proposition~\ref{prop:existence} provides $V$ with $\nrm{V}{L^p} \le \alpha_*$ such that $0 = \Lambda(\alpha_*)$ is an eigenvalue of $\Dirac_m - V\beta$. Then $\Lambda(\nrm{V}{L^p}) = 0$, so that $\nrm{V}{L^p} \ge \alpha_*$ by definition of $\alpha_*$. Conversely, if $\Dirac_m - V\beta$ has a zero mode, then $\Lambda(\nrm{V}{L^p}) = 0$, so $\alpha_* \le \nrm{V}{L^p}$.
\end{proof}

\section{No zero modes in dimensions \texorpdfstring{$1$}{1} and \texorpdfstring{$2$}{2}}
\label{sec:no_zero_modes}

In this section, we prove Theorem~\ref{thm:critical_alpha_dim12} that there are no zero modes in dimensions $1$ and $2$. By Lemma~\ref{lem:alpha_star}, it is enough to prove the following.

\begin{theorem} \label{thm:no_zero_modes}
    Let $d \in \{1, 2\}$, $p > d$ and $V \in L^p(\R^d, \R)$. Then $0$ is not an eigenvalue of $\Dirac_m - V\beta$.
\end{theorem}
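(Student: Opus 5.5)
The plan is to exploit the explicit low-dimensional structure of $\Dirac_m - V\beta$ with the Pauli matrices fixed in Section~\ref{sec:prelim}. Throughout, write $M := m - V$, which is real-valued. Assume $\psi \in \Dom(H_V)$ with $(\Dirac_m - V\beta)\psi = 0$. By the bootstrap of Lemma~\ref{lem:regularity_eigenfunctions} (with $\lambda = 0$), $\psi$ is continuous, bounded and in $W^{1,p}$, so the manipulations below are legitimate.

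\textbf{Dimension $d=1$.} Here $\alpha_1 = \sigma_1$ and $\beta = \sigma_3$. Write $\psi = (u,v)$ and set $w := \ri v$. The zero-mode equation becomes the real-coefficient system
\[
u' = M w, \qquad w' = M u .
\]
Hence $f_\pm := u \pm w$ satisfy $f_\pm' = \pm M f_\pm$, so that $f_\pm(x) = f_\pm(0)\exp\bigl(\pm F(x)\bigr)$ with $F(x) := m x - \int_0^x V$. Hölder's inequality gives $\bigl|\int_0^x V\bigr| \le \|V\|_{L^p}|x|^{1/p'} = o(|x|)$. Therefore $\re^{F}$ grows exponentially at $+\infty$ and $\re^{-F}$ grows exponentially at $-\infty$. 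Since $\psi \in L^2$, this forces $f_+(0) = f_-(0) = 0$, hence $\psi \equiv 0$. This case is routine.

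\textbf{Dimension $d=2$.} Here $\alpha_1 = \sigma_1$, $\alpha_2 = \sigma_2$ and $\beta = \sigma_3$. With $\partial_{\bar z} = \tfrac12(\partial_1 + \ri\partial_2)$, the equation reads
\[
-2\ri\,\partial_z v + M u = 0, \qquad -2\ri\,\partial_{\bar z} u - M v = 0 .
\]
Conjugating the first equation, and using that $M$ is real, decouples the system into the two functions $\chi_\pm := \re^{-\ri\pi/4}(u \pm \bar v)$. These satisfy the real-coefficient Vekua (Carleman--Bers) equations
\[
\partial_{\bar z}\chi_\pm = \pm\tfrac12 M\,\overline{\chi_\pm} .
\]
It therefore suffices to show that an $L^2\cap W^{1,p}$ solution $\chi$ of $\partial_{\bar z}\chi = \tfrac12 M\bar\chi$ vanishes identically; the sign $-$ is analogous.

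A first tool is the family of identities $\int \partial_{\bar z}(\chi^2\omega) = 0$ for holomorphic weights $\omega$. These give
\[
\int M|\chi|^2\,\omega = 0 .
\]
I would first prove exponential decay of $\chi$, at a rate arbitrarily close to $m$ away from a large ball where $V$ has small $L^p$ mass, by an Agmon-type argument using $(\Dirac_m-V\beta)^2$. With that decay, the weights $\omega = \re^{k z}$ are admissible for $|k| < m$. The second tool is the similarity principle: write $\partial_{\bar z}\chi = b\chi$ with $b := \tfrac12 M\bar\chi/\chi$, so $|b| \le \tfrac12(m+|V|)$. Solving $\partial_{\bar z} s = b$ through the Cauchy transform, $h := \chi\,\re^{-s}$ is entire. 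The $L^p$ part of $b$ (with $p>2$) contributes a bounded, H\"older continuous piece of $s$, and the constant part contributes a piece with at most linear growth. The aim is to combine these two tools to get a Liouville-type conclusion forcing $h \equiv 0$.

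\textbf{Main obstacle.} I expect the main difficulty to be the $d=2$ conclusion. The holomorphic-weight identities only kill the harmonic moments of $M|\chi|^2$. The similarity principle, in turn, only gives $|h| \lesssim \re^{C|z|}|\chi|$, and this competes with the exponential decay of $\chi$ at a comparable rate. To close the argument I would try to sharpen the decay rate of $\chi$ relative to the growth of $\re^{-s}$, using the exact rate $m$ coming from the constant part $\tfrac12 m\bar\chi/\chi$. I would also play the two equations for $\chi_+$ and $\chi_-$ (signs $\pm M$) against each other, as in $d=1$, where one of $\re^{\pm F}$ always grows. The failure of this mechanism in $d=3$, which lacks the Vekua structure, would be consistent with Conjecture~\ref{thm:critical_alpha_dim3}.
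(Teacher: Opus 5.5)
Your $d=1$ argument is correct and is the paper's. In $d=2$, your reduction to $\partial_{\bar z}\chi_\pm=\pm\frac12 M\overline{\chi_\pm}$ is also correct; it is the paper's reduction to $u=\psi_2+\ri\overline{\psi_1}$, $v=\psi_2-\ri\overline{\psi_1}$, written with $\partial_{\bar z}$ instead of $\partial_z$. After that, however, you give a programme rather than a proof, and the programme cannot close as designed. The similarity principle only retains the size bound $|b|\le\frac12(m+|V|)$ on $b=\partial_{\bar z}\chi/\chi$, and this bound allows nonzero $L^2$ solutions decaying exactly at rate $m$. For $m=1$, the function $\chi=\re^{-\sqrt{1+|z|^2}}$ solves $\partial_{\bar z}\chi=b\chi$ with $b=-z/(2\sqrt{1+|z|^2})$ and $|b|<\frac m2$, and $h=\chi\re^{-s}\equiv 1$ for $s=-\sqrt{1+|z|^2}$. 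So no sharpening of the decay of $\chi$ against the growth of $\re^{-s}$ can give $h\equiv 0$. A proof must use that $b\chi/\bar\chi=\frac12 M$ is \emph{real} at every point, which is exactly what fails in this example. Your growth claims for $s$ are also too optimistic: the Cauchy integral of an $L^p$ function with $p>2$ need not converge at infinity, and for a bounded right-hand side such as $\frac m2\, z/\bar z$ no solution of $\partial_{\bar z}s=b$ is $O(|z|)$. As you note yourself, the identities $\int M|\chi|^2\omega=0$ only constrain harmonic moments, which gives nothing when $M$ changes sign. Playing $\chi_+$ against $\chi_-$ does not help either: they are decoupled and each must vanish on its own, exactly as each $f_\pm$ does in $d=1$.

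The missing idea is already in your first tool: $\partial_{\bar z}(\chi^2)=M|\chi|^2$ is real \emph{pointwise}, and you should use this before integrating.
\begin{itemize}
\item Write $\chi^2=f+\ri g$. Reality means $\partial_2 f+\partial_1 g=0$, so the continuous field $(f,-g)$ is curl-free and equals $\nabla\Theta$ for a real $\Theta\in C^1(\R^2)$, i.e.\ $\chi^2=2\partial_z\Theta$.
\item Since $|\chi^2|=|\nabla\Theta|$, the equation closes into the scalar drift equation $\Delta\Theta=2M|\nabla\Theta|$, with $|\nabla\Theta|=|\chi|^2\in L^1\cap L^\infty$. It has no zeroth-order term, so a maximum principle applies.
\item From $\nabla\Theta\in L^1(\R^2)$, a scale-invariant Sobolev--Poincar\'e inequality on the balls $B_{2^j}$, together with the Lipschitz bound, gives a limit $\Theta_\infty$ of $\Theta$ at infinity.
\item Suppose $k:=\sup\Theta>\Theta_\infty$. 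Test the equation against $(\Theta-\ell)_+$, which is supported in the bounded set $S_\ell:=\{\Theta>\ell\}$. H\"older (with $M\in L^p_{\rm loc}$, $p>2$) and Gagliardo--Nirenberg give $1\le C\|M\|_{L^p(T_\ell)}|S_\ell|^{1/q}$, where $T_\ell:=S_\ell\cap\{\nabla\Theta\neq 0\}$ and $q=\frac{2p}{p-2}$.
\item As $\ell\uparrow k$, $|T_\ell|\to 0$ because $\nabla\Theta=0$ wherever $\Theta=k$, while $|S_\ell|$ stays bounded. This is a contradiction.
\item The same argument with $-\Theta$ shows that $\Theta$ is constant, hence $\chi=0$.
\end{itemize}
This argument uses only that $M$ is real and locally $L^p$, never its behaviour at infinity. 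The exponential decay at rate $m$, and the Agmon estimate you would need to prove it, are therefore beside the point in $d=2$.
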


This eventually proves that $\alpha_*(p,d) = +\infty$ by Lemma~\ref{lem:alpha_star}. In what follows, we set $M := m - V$.

\subsection{The case \texorpdfstring{$d = 1$}{d=1}}
We choose a spin basis in which $α_1 = \sigma_1$ and $\beta = \sigma_3$. The zero mode equation $(\Dirac_m - V β) ψ = 0$ is $- \ri \sigma_1 \psi' + M \sigma_3 ψ = 0$. Multiplying by $\ri \sigma_1$, this is also
\[
    \psi' = - M \sigma_2 \psi = \begin{pmatrix} 0 & \ri M \\ - \ri M & 0 \end{pmatrix} \psi.
\]
This is a simple ODE, whose solution is of the form
\[
    \psi(x) = c_+ \exp \left( - \int_0^x M\right) \begin{pmatrix} 1 \\ \ri \end{pmatrix} + c_- \exp \left( \int_0^x M\right) \begin{pmatrix} 1 \\ - \ri \end{pmatrix}
\]
for some $c_\pm \in \C$. Since $V ∈ L^p(\R, \R)$, Hölder's inequality gives $| \int_0^x V | \le \nrm{V}{L^p} |x|^{1 - 1/p} = o(| x |)$, so $\int_0^x M = mx + o(| x |)$ as $x \to \pm \infty$. The integrability condition $\psi \in L^2(\R, \C^2)$ forces $c_\pm = 0$, hence $\psi = 0$.

\subsection{The case \texorpdfstring{$d = 2$}{d=2}}

Recall that we have defined $\alpha_1 = \sigma_1$, $\alpha_2 = \sigma_2$ and $\beta = \sigma_3$. Let $ψ ∈ L^2(\R^2, \C^2)$ be a solution to the eigenvalue equation $(\Dirac_m - V \beta) ψ  = 0$. By Lemma~\ref{lem:regularity_eigenfunctions}, $\psi$ is continuous and in $W^{1,p}(\R^2) \cap L^\infty(\R^2)$.

The equation $(\Dirac_m - V) ψ = (\Dirac_0 + M \beta) ψ  = 0$ reads
\[
    \begin{cases} 
        2 \partial_z \psi_2 = M (- \ri \psi_1) \\
        2 \partial_{\overline z} (- \ri \psi_1) = M \psi_2
    \end{cases}, \qquad \text{with} \qquad
    \partial_z = \frac12 (\partial_1 - \ri \partial_2), 
    \quad 
    \partial_{\overline{z}} = \frac12 (\partial_1 + \ri \partial_2).
\]
We conjugate the second equation (recall that $M$ is real-valued), add and subtract the two equations, and get that the functions $u := \psi_2 + \ri \overline{\psi_1}$ and $v := \psi_2 - \ri \overline{\psi_1}$ solve
\[
    2 \partial_z u = M \overline{u}, \quad \text{and} \quad 2 \partial_z v = - M \overline{v}.
\]
Let us prove that this equation implies $u = 0$ (the proof for $v$ is identical, with $M$ replaced by $-M$). The function $F := u^2$ solves
\[
    \partial_z F = 2 u \partial_z u = M | u |^2 = M | F |.
\]
The right-hand side is real-valued. Writing $F = f + \ri g$ with $f$ and $g$ real-valued, the equation becomes
\[
\partial_2 f - \partial_1 g = 0, \qquad \partial_1 f + \partial_2 g = 2 M | F |.
\]
The first equation shows that the continuous form $f \rd x_1 + g \rd x_2$ is closed, hence exact on $\R^2$: there is a real-valued function $\Theta \in C^1(\R^2, \R)$ such that $\nabla \Theta = (f, g)$. Since $|F| = (f^2 + g^2)^{1/2} = |\nabla \Theta|$, the second equation becomes
\begin{equation} \label{eq:Theta}
    \Delta \Theta = 2 M | \nabla \Theta |.
\end{equation}
In addition, we have $|\nabla \Theta| = |u|^2 \in L^1(\R^2) \cap L^\infty(\R^2)$. It remains to prove that~\eqref{eq:Theta} has no nontrivial solution.

\medskip

We first use an application of a Sobolev-Poincaré inequality, see e.g. ~\cite[Section 7.8]{GilTru-01}.

\begin{lemma}\label{lem:poincare_sobolev}
    If $f ∈ L^1_{\rm loc}(\R^2, \R)$ is such that $∇ f ∈ L^1(\R^2, \R^2)$, then there is $f_∞ ∈ \R$ such that $f - f_∞ ∈ L^2(\R^2)$.
\end{lemma}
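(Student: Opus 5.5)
The plan is to reduce the statement to the critical Sobolev embedding $\dot W^{1,1}(\R^2) \hookrightarrow L^2(\R^2)$, which is the Gagliardo--Nirenberg inequality $\|\phi\|_{L^2} \le C \|\nabla \phi\|_{L^1}$ for $\phi \in C^\infty_0(\R^2)$. We approximate $f$ by functions to which this inequality applies, and identify the constant $f_\infty$ as a limit of averages.

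\textbf{Step 1: localized means.} Fix a cutoff $\eta_R(x) := \eta(x/R)$ with $\eta \in C^\infty_0(B_2)$, $0 \le \eta \le 1$, and $\eta \equiv 1$ on $B_1$. Let $m_R$ be the mean of $f$ over the annulus $A_R := B_{2R} \setminus B_R$ (or over the ball $B_{2R}$). Apply Gagliardo--Nirenberg, after a density argument, to $\phi_R := \eta_R (f - m_R)$. This gives
\[
    \|\phi_R\|_{L^2} \le C\Big( \|\nabla f\|_{L^1(B_{2R})} + R^{-1}\|f - m_R\|_{L^1(A_R)}\Big).
\]
The Poincaré inequality on the annulus, $\|f - m_R\|_{L^1(A_R)} \le C R \|\nabla f\|_{L^1(A_R)}$, is scale invariant because the constant scales like $R$. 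Consequently $\|\eta_R(f - m_R)\|_{L^2} \le C \|\nabla f\|_{L^1(\R^2)}$, uniformly in $R$.

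\textbf{Step 2: convergence of the means.} We show that $(m_R)$ is Cauchy as $R \to \infty$. Comparing $m_R$ and $m_{2R}$ on the common region, the Poincaré inequality on $B_{4R}\setminus B_R$ gives
\[
    |m_R - m_{2R}| \le C R^{-2}\|f - m\|_{L^1} \le C R^{-1}\|\nabla f\|_{L^1(|x|\ge R)}.
\]
This bound is not summable in general, so it has to be improved. The better route uses $L^2$ control on the annulus from the Sobolev--Poincaré inequality: $\|f - m_R\|_{L^2(A_R)} \le C\|\nabla f\|_{L^1(A_R)}$, which is again scale invariant in 2D. Then
\[
    |m_R - m_{2R}| \le C R^{-1}\|f - m_R\|_{L^2} \le C R^{-1}\|\nabla f\|_{L^1}.
\]
This is still a bound of the type $R^{-1}\cdot$const and is not summable. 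So instead we argue with the uniform $L^2$ bound from Step 1. The functions $g_R := \eta_R(f - m_R)$ are bounded in $L^2$. If $|m_R - m_{R'}| = \delta$ for $R < R'$, then on $B_R$ we have $g_{R'} - g_R = m_R - m_{R'}$, so $\delta^2 |B_R| \le 4C^2\|\nabla f\|_{L^1}^2$. Hence $|m_R - m_{R'}| \le C' R^{-1}$ for all $R' > R$, so $(m_R)$ is Cauchy. Call its limit $f_\infty$.

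\textbf{Step 3: conclusion.} Fatou's lemma applied to $\eta_R(f - m_R) \to f - f_\infty$ pointwise gives
\[
    \|f - f_\infty\|_{L^2} \le C\|\nabla f\|_{L^1}.
\]

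The main obstacle is justifying Gagliardo--Nirenberg for $\phi_R$, which is only $W^{1,1}$ with compact support. We handle it by mollification: the inequality passes to the limit, since mollifiers converge in $W^{1,1}$ and almost everywhere along a subsequence, and Fatou's lemma applies. We also need the annular Poincaré--Sobolev inequality $\|f - m_R\|_{L^2(A_R)} \le C \|\nabla f\|_{L^1(A_R)}$, which is the cited result in~\cite[Section 7.8]{GilTru-01} combined with scaling.
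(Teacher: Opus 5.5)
Your argument is correct, and its skeleton matches the paper's: a scale-invariant $L^2$ bound for $f$ minus its mean at scale $R$, convergence of the means at rate $R^{-1}$, and Fatou's lemma.

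The difference is in how you obtain the local bound. The paper proves the Sobolev--Poincar\'e inequality $\|f-\overline{f}_R\|_{L^2(B_R)}\le C\|\nabla f\|_{L^1(B_R)}$ on balls directly. It argues by contradiction using Rellich--Kondrachov and the embedding $W^{1,1}(B_R)\hookrightarrow L^2(B_R)$, and then compares ball averages along dyadic radii. You instead apply the global Gagliardo--Nirenberg inequality on $\R^2$ to the cut-off function $\eta_R(f-m_R)$, and pay for the cutoff with the $L^1$ Poincar\'e inequality on the annulus $A_R$. This avoids the compactness argument, since it only needs the standard $L^1$ Poincar\'e inequality on one fixed domain plus scaling. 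It also gives the quantitative bound $\|f-f_\infty\|_{L^2}\le C\|\nabla f\|_{L^1}$ directly. Your Cauchy argument, that $g_{R'}-g_R$ equals the constant $m_R-m_{R'}$ on $B_R$, is the same estimate as the paper's comparison of $\overline{f}_R$ and $\overline{f}_{2R}$, but valid for all $R'>R$ at once.

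One correction. You say a bound of the form $|m_R-m_{2R}|\le CR^{-1}\|\nabla f\|_{L^1}$ is not summable, but this is wrong. Along $R=2^j$ it gives a geometric series, and this is exactly how the paper concludes. So your first estimate, via the $L^1$ Poincar\'e inequality on $B_{4R}\setminus B_R$, already suffices for Step 2. Likewise, the annular $L^2$ Sobolev--Poincar\'e inequality you list at the end as also needed is never used in your final argument.
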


\begin{proof}
    We claim that there is a constant $C > 0$ so that, for all $R > 0$ and all $f ∈ W^{1,1}(B_R)$, we have 
\[
    \nrm{f - \overline{f}_R}{L^2(B_R)} \le C \nrm{\nabla f}{L^1(B_R)}\qquad \text{with} \qquad \overline{f}_R := \frac{1}{|B_R|} \int_{B_R} f.
\]
By scaling, the constant $C$ is independent of $R$, so it is enough to prove the inequality for one fixed $R$. Assume otherwise, and let $f_n \in W^{1,1}(B_R)$ be a sequence with
\[
    \overline{(f_n)}_R = 0, \qquad \nrm{f_n}{L^2(B_R)} = 1, \qquad \nrm{\nabla f_n}{L^1(B_R)} \to 0.
\]
By Hölder's inequality, $\nrm{f_n}{L^1(B_R)} \le |B_R|^{1/2} \nrm{f_n}{L^2(B_R)}$, so $(f_n)$ is bounded in $W^{1,1}(B_R)$. By Rellich--Kondrachov's theorem, up to a subsequence, $f_n \to f$ strongly in $L^1(B_R)$. The limit satisfies $\nabla f = 0$ in the distributional sense, so $f$ is constant on the connected set $B_R$, and $f = 0$ since $\overline{f}_R = 0$. Finally, the Sobolev embedding $W^{1,1}(B_R) \hookrightarrow L^2(B_R)$ gives
\[
    1 = \nrm{f_n}{L^2(B_R)} \le C \left( \nrm{f_n}{L^1(B_R)} + \nrm{\nabla f_n}{L^1(B_R)} \right) \xrightarrow[n \to \infty]{} 0,
\]
a contradiction. Next, we apply this inequality on $B_{2R}$. Hölder's inequality gives
\[
    | \overline{f}_R - \overline{f}_{2R} | \le \frac{1}{| B_R |} \int_{B_R} | f(x) - \overline{f}_{2R}| \rd x \le  \frac{1}{| B_R |^{1/2}} \nrm{f - \overline{f}_{2R}}{L^2(B_{2R})} \le \frac{C}{\sqrt{\pi} R} \nrm{\nabla f}{L^1(\R^2)},
\]
so $(\overline{f}_{2^j})_{j \in \N}$ is a Cauchy sequence, which converges to some $f_\infty \in \R$. Then $\nrm{f - f_\infty}{L^2(B_{2^j})}$ is bounded uniformly in $j$, and Fatou's lemma gives $f - f_∞ ∈ L^2(\R^2)$, as wanted.
\end{proof}

Now, we return to the system for $\Theta$.
\begin{lemma}
    If $Θ ∈ C^1(\R^2, \R)$ solves \eqref{eq:Theta} in the distributional sense, with $∇ Θ ∈ L^1(\R^2, \R^2)$, then $\Theta$ is constant.
\end{lemma}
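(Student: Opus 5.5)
The plan is to read \eqref{eq:Theta} as a \emph{linear} elliptic equation with a drift term, apply a maximum principle on large balls, and use the decay of $\Theta$ at infinity coming from Lemma~\ref{lem:poincare_sobolev}.

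\textbf{Step 1: decay at infinity.} Since $\nabla\Theta\in L^1(\R^2)$, Lemma~\ref{lem:poincare_sobolev} gives $\Theta_\infty\in\R$ with $\Theta-\Theta_\infty\in L^2(\R^2)$. In our setting we also know $|\nabla\Theta|=|u|^2\in L^\infty$, so $\Theta$ is globally Lipschitz. A Lipschitz function $w:=\Theta-\Theta_\infty$ in $L^2(\R^2)$ must tend to $0$ at infinity. Indeed, if $|w(x_n)|\ge\varepsilon$ with $|x_n|\to\infty$, then $|w|\ge\varepsilon/2$ on $B_{\varepsilon/(2L)}(x_n)$. Extracting disjoint such balls contradicts $w\in L^2$. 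Hence $\Theta(x)\to\Theta_\infty$ uniformly as $|x|\to\infty$. (If one only wants to use the hypotheses as stated in the lemma, the $L^\infty$ bound on $\nabla\Theta$ should be added, or obtained by local elliptic estimates; in our application it is available.)

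\textbf{Step 2: linearization and regularity.} Define the vector field $b:=2M\,\nabla\Theta/|\nabla\Theta|$ on $\{\nabla\Theta\neq0\}$ and $b:=0$ elsewhere. Then $|b|\le2|M|\le 2m+2|V|\in L^p_{\rm loc}$ with $p>2=d$, and $\Theta$ solves the linear equation
\[
    \Delta\Theta-b\cdot\nabla\Theta=0.
\]
Because $\Delta\Theta=2M|\nabla\Theta|\in L^p_{\rm loc}$ (as $\nabla\Theta$ is bounded), Calderón--Zygmund estimates give $\Theta\in W^{2,p}_{\rm loc}\subset W^{2,2}_{\rm loc}$. So $\Theta$ is a strong solution.

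\textbf{Step 3: maximum principle and conclusion.} On each ball $B_R$ I would apply the Aleksandrov--Bakelman--Pucci maximum principle for strong $W^{2,d}_{\rm loc}\cap C^0(\overline{B_R})$ solutions of $\Delta w-b\cdot\nabla w=0$, with drift $b\in L^d(B_R)$, as in \cite[Theorem 9.1]{GilTru-01}. It gives $\sup_{B_R}\Theta\le\sup_{\partial B_R}\Theta$ and, applied to $-\Theta$, $\inf_{B_R}\Theta\ge\inf_{\partial B_R}\Theta$. Letting $R\to\infty$ and using Step~1, both boundary extrema tend to $\Theta_\infty$. Therefore $\Theta\equiv\Theta_\infty$ is constant.

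The main obstacle I expect is justifying the maximum principle with an unbounded, merely $L^p_{\rm loc}$ drift. The classical weak maximum principle \cite[Theorem 8.1]{GilTru-01} requires bounded coefficients, so one must use the ABP version, where $b\in L^d$ suffices. That version in turn needs the $W^{2,d}_{\rm loc}$ regularity of Step~2, which is exactly where the condition $p>d=2$ enters. If this route caused trouble, a fallback is to work on the open set $\{\Theta>\Theta_\infty+\varepsilon\}$, which is bounded by Step~1. There one tests the equation with $(\Theta-\Theta_\infty-\varepsilon)_+$ and uses the Sobolev inequality to absorb $\int|b||\nabla\Theta|\,(\Theta-\Theta_\infty-\varepsilon)_+$. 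That absorption is only straightforward if the superlevel set is small, which is why I would try ABP first.
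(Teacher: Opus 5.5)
Your proof is correct. Step 1 coincides with the paper's, including the use of $\nabla\Theta\in L^\infty$, which the paper also takes from $|\nabla\Theta|=|u|^2$ rather than from the lemma's hypotheses. The difference lies in how the maximum principle is obtained.

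\emph{The paper's route.} It neither linearizes nor uses second-order regularity. Assuming $k=\sup\Theta>\Theta_\infty=0$, it tests the weak equation against $\phi_\ell=(\Theta-\ell)_+$, which has bounded support $S_\ell$. Hölder's inequality with $q=2p/(p-2)$ and Gagliardo--Nirenberg then give $1\le C\|M\|_{L^p(T_\ell)}|S_\ell|^{1/q}$, where $T_\ell=S_\ell\cap\{\nabla\Theta\neq0\}$. The key observation is that $|T_\ell|\to0$ as $\ell\uparrow k$, because a $C^1$ function has vanishing gradient at its maximum points, while $|S_\ell|$ stays bounded. This is exactly how the paper handles the obstacle you flagged in your fallback. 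The superlevel sets themselves need not become small, since $\{\Theta=k\}$ may have positive measure. What does become small is the part of $S_\ell$ where the nonlinear term $2M|\nabla\Theta|$ (your drift $b\cdot\nabla\Theta$) is nonzero.

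\emph{Your route.} You write the equation as $\Delta\Theta-b\cdot\nabla\Theta=0$ with $|b|\le 2|M|$, obtain $\Theta\in W^{2,p}_{\rm loc}$ from Calder\'on--Zygmund, and apply ABP on balls. Since $c=0$, you can subtract $\sup_{\partial B_R}\Theta$ to remove the $u^+$ appearing in Gilbarg--Trudinger's statement.

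\emph{Comparison.} Your argument is shorter and somewhat more robust: it only needs $b\in L^2_{\rm loc}$ and $\Delta\Theta\in L^2_{\rm loc}$, so it would even cover $p=2$, where the paper's exponent $q$ degenerates. The price is a heavy black box plus elliptic regularity. The paper's argument is elementary, using only the distributional equation and a Sobolev inequality, but it exploits the specific structure of \eqref{eq:Theta}.
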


\begin{proof}

Lemma~\ref{lem:poincare_sobolev} shows that there exists $\Theta_\infty$ such that $\Theta- \Theta_\infty \in L^2$. 
Since $\Theta$ is in addition Lipschitz ($\nabla \Theta \in L^\infty$), this implies that $\Theta(x) \to \Theta_\infty$ as $|x| \to \infty$. In what follows, we assume without loss of generality $\Theta_∞ = 0$. It remains to prove that~\eqref{eq:Theta} has no nontrivial solution going to $0$ at infinity. We use a maximum principle, following the truncation method of Stampacchia~\cite{Sta-65} (see also~\cite[Chapter~8]{GilTru-01}). Assume by contradiction that $k := \sup \Theta > 0$. For $\ell \in (0, k)$, we set
\[
    \phi_\ell := [\Theta - \ell]_+, \qquad S_\ell := \{ \Theta > \ell \}, \qquad T_\ell := \{ \Theta > \ell \} \cap \{ \nabla \Theta \neq 0 \}.
\]
Since $\Theta$ is continuous and goes to $0$ at infinity, the set $S_\ell$ is open and bounded. We multiply~\eqref{eq:Theta} by $\phi_\ell$ and integrate. Hölder's inequality gives, with $q := \frac{2p}{p-2} \in (2, \infty)$ (recall that $p > d = 2$)
\[
    \int_{\R^2} |\nabla \phi_\ell|^2 = - 2 \int_{T_\ell} M \, \phi_\ell \, |\nabla \phi_\ell|
    \le 2 \nrm{M}{L^p(T_\ell)} \nrm{\phi_\ell}{L^q(T_\ell)} \nrm{\nabla \phi_\ell}{L^2(T_\ell)},
\]
so
\begin{equation} \label{eq:ineq:Sta1}
    \nrm{\nabla \phi_\ell}{L^2(T_\ell)} \le 2 \nrm{M}{L^p(T_\ell)} \nrm{\phi_\ell}{L^q(T_\ell)}.
\end{equation}
On the other hand, the Gagliardo--Nirenberg inequality in dimension $2$, together with Hölder's inequality on the support $S_\ell$ of $\phi_\ell$, gives
\[
    \nrm{\phi_\ell}{L^q(S_\ell)} \le C_q \nrm{\phi_\ell}{L^2(S_\ell)}^{\frac{2}{q}} \nrm{\nabla \phi_\ell}{L^2(S_\ell)}^{1 - \frac{2}{q}}
    \quad \text{and} \quad
    \| \phi_ℓ \|_{L^2(S_\ell)} \le |S_\ell|^{\frac12 - \frac1q} \nrm{\phi_\ell}{L^q(S_\ell)}.
\]
Combining the two inequalities and using~\eqref{eq:ineq:Sta1} gives
\[
    \nrm{\phi_\ell}{L^q(S_\ell)} \le C_q' \, |S_\ell|^{\frac1q} \nrm{\nabla \phi_\ell}{L^2(T_\ell)} \le C_q' \, |S_\ell|^{\frac1q} 2 \nrm{M}{L^p(T_\ell)} \nrm{\phi_\ell}{L^q(T_\ell)}.
\]
Since $\phi_\ell \neq 0$, we have $\nrm{\phi_\ell}{L^q(S_\ell)} \neq 0$, and we obtain
\[
    1 \le 2 C_q' \, \nrm{M}{L^p(T_\ell)} \, |S_\ell|^{\frac1q} \qquad \text{for all } \ell \in (0, k).
\]
We now let $\ell \to k$. The sets $T_\ell$ are nonincreasing, and their intersection is $\{ \Theta = k \} \cap \{ \nabla \Theta \neq 0 \}$, which is the empty set, since every point where $\Theta = k$ is a global maximum of the $C^1$ function $\Theta$. So $|T_\ell| \to 0$, and $\nrm{M}{L^p(T_\ell)} \to 0$ as well. So we must have $| S_ℓ | → \infty$ as $ℓ → k$, which contradicts $S_\ell \subset S_{\ell_0}$ for $\ell \ge \ell_0$, with $S_{\ell_0}$ bounded. This proves that $k = 0$, and therefore that $Θ \le 0$. 

\medskip

The exact same argument with $- \Theta$ (and replacing $M$ by $-M$) shows that $Θ \ge 0$, and we conclude that $Θ = 0$. We deduce that $F = u^2 = 0$, and $u = 0$. The proof for $v$ is identical, with $M$ replaced by $-M$. So $\psi = 0$, and $\Dirac_m - V\beta$ has no zero mode in dimension $d = 2$.
\end{proof}

\subsection{Non existence of zero modes in the radial case}
We now prove Proposition~\ref{prop:radialV}, and prove that if $V ∈ L^p(\R^d, \R)$ is radial, then there are no zero modes. We already proved the absence of zero mode in dimensions $1$ and $2$, so we may assume $d \ge 3$. 

\medskip

Consider $V ∈ L^p(\R^d, \R)$ radial, and write $M(r) := m - V(r)$, where we set $x = r \omega$ with $r = | x |$ and $ω ∈ \mathbb{S}^{d-1}$. We first write the Dirac operator in polar coordinates. From the Clifford relations $\alpha_j \alpha_k + \alpha_k \alpha_j = 2 \delta_{jk}$, we have the formal computation (we do not assume that the $x_j$'s and the $y_k$'s commute)
\[
    (\balpha ⋅ \bx)(\balpha ⋅ \by) = \sum_{1 \le j, k \le d} \alpha_j \alpha_k x_j y_k = \sum_{1 \le j \le d} x_j y_j  + \sum_{ 1 \le j< k \le d} \alpha_j \alpha_k (x_j y_k - x_k y_j).
\]
With $L_{jk} := x_j \partial_k - x_k \partial_j$ and $\widetilde{K} := \sum_{1 \le j < k \le d} \alpha_j \alpha_k L_{jk}$, we obtain in particular
\begin{equation} \label{eq:polar_products}
    (\balpha ⋅ \bx)^2 = | \bx |^2, \qquad
    (\balpha ⋅ \bx)(\balpha ⋅ \nabla) = r \partial_r + \widetilde{K}, \qquad
    (\balpha ⋅ \nabla)(\balpha ⋅ \bx) = d + r \partial_r - \widetilde{K}.
\end{equation}
We set
\[
    S := \balpha \cdot \omega \qquad \text{and} \qquad K := \frac{d-1}{2} - \widetilde{K}.
\]
The operator $S$ is the multiplication operator satisfying $S^2 = \bbI_N$, and $K$ is the {\em spin--orbit operator}. The constant $\frac{d-1}{2}$ is chosen so that $K$ anticommutes with $S$, as we recall in the next Lemma.

\begin{lemma}[Polar decomposition] \label{lem:polar}
    Let $d \ge 2$. We have
    \begin{enumerate}[label=(\roman*)]
        \item $\balpha \cdot \nabla = S \left( \partial_r + \dfrac{d-1}{2r} - \dfrac{1}{r} K \right)$;
        \item $SK = - KS$ and $\beta K = K \beta$;
        \item $\left( K - \frac12 \right)^2 = - \Delta_{\mathbb{S}^{d-1}} + \frac{(d-2)^2}{4}$. In particular, $K$ is self-adjoint with compact resolvent and, if $d \ge 3$, $K$ is injective.
    \end{enumerate}
\end{lemma}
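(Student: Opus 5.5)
Everything follows from the algebraic identities~\eqref{eq:polar_products}, together with $S = r^{-1}(\balpha\cdot\bx)$ and $S^2 = \bbI_N$. For (i), I multiply the second identity in~\eqref{eq:polar_products} on the left by $r^{-2}(\balpha\cdot\bx)$ and use $(\balpha\cdot\bx)^2 = r^2$:
\[
\balpha\cdot\nabla = r^{-2}(\balpha\cdot\bx)\bigl(r\partial_r + \widetilde K\bigr) = S\Bigl(\partial_r + \tfrac1r \widetilde K\Bigr).
\]
Substituting $\widetilde K = \frac{d-1}{2} - K$ gives $\balpha\cdot\nabla = S\bigl(\partial_r + \frac{d-1}{2r} - \frac1r K\bigr)$, as claimed.

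For (ii), $\beta K = K\beta$ holds because $\beta$ anticommutes with each $\alpha_j$, so it commutes with every product $\alpha_j\alpha_k$, and $\beta$ is constant, so it commutes with $L_{jk}$. For $SK = -KS$, I compute $(\balpha\cdot\nabla)(\balpha\cdot\bx)$ in two ways. The third identity in~\eqref{eq:polar_products} gives $d + r\partial_r - \widetilde K$. On the other hand, using (i) and $\balpha\cdot\bx = rS$,
\[
(\balpha\cdot\nabla)(rS) = S\Bigl(\partial_r + \tfrac1r \widetilde K\Bigr)(rS) = S\Bigl(S + r\partial_r S + \widetilde K S\Bigr).
\]
Here $\partial_r S = S\partial_r$, since $S$ depends only on $\omega$. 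Comparing the two expressions yields $1 + \widetilde K + S\widetilde K S = d$, that is $S\widetilde K S = d - 1 - \widetilde K$. Rewritten in terms of $K$, this becomes $SKS = -K$, hence $SK = -KS$.

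For (iii), I expand $\widetilde K^2$. Squaring the sum $\sum_{j<k}\alpha_j\alpha_k L_{jk}$ produces three kinds of terms. The diagonal terms give $-\sum_{j<k} L_{jk}^2$, since $(\alpha_j\alpha_k)^2 = -1$; this equals $\Delta_{\mathbb{S}^{d-1}}$ with a sign, and I have to track that sign carefully. Terms with four distinct indices cancel after antisymmetrization, because the corresponding $L$'s commute. Terms with exactly one shared index combine through the commutators $[L_{jk}, L_{kl}]$ into a multiple of $\widetilde K$. The outcome should be an identity of the form
\[
\widetilde K^2 = -\Delta_{\mathbb{S}^{d-1}} - (d-2)\widetilde K,
\]
and the coefficient $d-2$ is exactly what I must verify. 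Substituting $\widetilde K = \frac{d-1}{2} - K$ and completing the square should then give $(K - \frac12)^2 = -\Delta_{\mathbb S^{d-1}} + \frac{(d-2)^2}{4}$.

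As a sanity check, I can instead use $\Delta = (\balpha\cdot\nabla)^2$ together with (i) and (ii). Since $S(\partial_r + \frac{d-1}{2r} - \frac Kr) = (\partial_r + \frac{d-1}{2r} + \frac Kr)S$, I get
\[
\Delta = \Bigl(\partial_r + \tfrac{d-1}{2r} + \tfrac Kr\Bigr)\Bigl(\partial_r + \tfrac{d-1}{2r} - \tfrac Kr\Bigr),
\]
and I compare this with $\Delta = \partial_r^2 + \frac{d-1}{r}\partial_r + \frac1{r^2}\Delta_{\mathbb S^{d-1}}$. The first-order terms cancel, and the $r^{-2}$ terms give $-(\frac{d-1}{2})(\frac{d-3}{2}) + K - K^2 = \Delta_{\mathbb S^{d-1}}$. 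This is equivalent to (iii), since $\frac{(d-1)(d-3)}{4} + \frac14 = \frac{(d-2)^2}{4}$. I would actually present this second route, because it avoids the index bookkeeping.

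The remaining consequences are direct. $K$ is symmetric on $L^2(\mathbb S^{d-1},\C^N)$, since each $\alpha_j\alpha_k L_{jk}$ is a product of commuting anti-selfadjoint operators. It commutes with $-\Delta_{\mathbb S^{d-1}}$, and $(K-\frac12)^2$ is a function of that operator, so $K$ is diagonalized on the finite-dimensional eigenspaces of $-\Delta_{\mathbb S^{d-1}}\otimes\bbI_N$. It follows that $K$ is self-adjoint with compact resolvent. Finally, if $Kf = 0$, then (iii) gives $\frac14 \ge \frac{(d-2)^2}{4}$, which is impossible for $d\ge3$; hence $K$ is injective when $d \ge 3$. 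The only genuinely delicate step is the sign and constant bookkeeping in (iii).
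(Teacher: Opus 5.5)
Your proofs of (i), (ii) and of the identity in (iii) follow the paper's route. Comparing the two expressions for $(\balpha\cdot\nabla)(\balpha\cdot\bx)$ is the same computation as the paper's comparison of the two factorizations of $\balpha\cdot\nabla$. Your second route $\Delta = D_+D_-$ is exactly the paper's argument.

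The genuine gap is in the injectivity claim. If $Kf=0$, then (iii) gives $\frac14\|f\|^2 = \langle f, -\Delta_{\mathbb{S}^{d-1}}f\rangle + \frac{(d-2)^2}{4}\|f\|^2$. This is contradictory only when $\frac{(d-2)^2}{4} > \frac14$, that is, for $d\ge 4$. For $d=3$, which is the case needed in Proposition~\ref{prop:radialV}, one has $\frac{(d-2)^2}{4}=\frac14$, and your inequality reads $\frac14\ge\frac14$, which is no contradiction. In fact $(K-\frac12)^2=\frac14$ on the whole sector of constant spinors, so (iii) alone only tells you that $K\in\{0,1\}$ there.

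You need one more step, which is how the paper concludes. Equality forces $\int_{\mathbb{S}^{2}}|\nabla_{\mathbb{S}}f|^2=0$, so $f$ is a constant spinor. Every $L_{jk}$ annihilates constants, hence $\widetilde K f=0$ and $Kf=\frac{d-1}{2}f=f$. So $Kf=0$ forces $f=0$. Equivalently, using your own spectral description: on the degree-$\ell$ sector the eigenvalues of $K$ lie in $\{\frac12\pm(\ell+\frac{d-2}{2})\}$. So $0$ can only occur for $d=3$, $\ell=0$, where $K=1$.

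There are also two sign slips to fix.

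\begin{itemize}
\item In $D_+D_-$, the $r^{-2}$ coefficient is $a(a-1)+K-K^2$ with $a=\frac{d-1}{2}$, that is, $+\frac{(d-1)(d-3)}{4}+K-K^2$. With the minus sign you wrote, completing the square would give $-\Delta_{\mathbb{S}^{d-1}}+\frac14-\frac{(d-1)(d-3)}{4}$ instead of $-\Delta_{\mathbb{S}^{d-1}}+\frac{(d-2)^2}{4}$. Your final arithmetic check tacitly uses the correct sign.
\item In the first route you set aside, the correct identity is $\widetilde K^2=-\Delta_{\mathbb{S}^{d-1}}+(d-2)\widetilde K$. With $-(d-2)\widetilde K$, substituting $\widetilde K=\frac{d-1}{2}-K$ produces the linear term $-(2d-3)K$, and the square would not complete to $(K-\frac12)^2$.
\end{itemize}
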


\begin{proof}
    (i) The operator $S$ is a multiplication operator which depends only on $\omega$, so it commutes with $r \partial_r$. Using that $r = | \bx |$, we can check that $\widetilde{K}$ commutes with $r$. Since $\balpha ⋅ \bx = r S$ and $S^2 = \bbI_N$, the second identity of~\eqref{eq:polar_products} gives
    \[
        \balpha ⋅ \nabla = \frac{1}{r} S (r \partial_r + \widetilde{K}) = S \left(\partial_r + \frac{d-1}{2r} - \frac{1}{r} K \right).
    \]
    Using the third identity instead, and the fact that $(r \partial_r) \frac{1}{r} = \partial_r - \frac{1}{r}$, we get
    \[
    \balpha ⋅ \nabla = \left( d + r \partial_r - \widetilde{K} \right) \frac{1}{r} S = \left( \partial_r + \frac{d-1}{r} - \frac{\widetilde{K}}{r} \right) S = \left( \partial_r + \frac{d-1}{2r} + \frac{1}{r} K \right) S.
    \]

    (ii) Comparing the two expressions, and using that $S$ commutes with $\partial_r$ and $\frac{1}{r}$, gives directly $KS + SK = 0$. On the other hand, since $\beta$ anticommutes with all $\alpha_j$, it commutes with $K$.

    \medskip

    (iii) We set $D_\pm := \partial_r + \frac{d-1}{2r} \pm \frac1r K$. By (i) and (ii), we have $S D_- S = D_+$, so
    \[
        \Delta = (\balpha \cdot \nabla)^2 = S D_- S D_- = D_+ D_- = \partial_r^2 + \frac{d-1}{r} \partial_r + \frac{1}{r^2} \left( \frac{(d-1)(d-3)}{4} + K - K^2 \right).
    \]
    Comparing with the usual $\Delta = \partial_r^2 + \frac{d-1}{r} \partial_r + \frac{1}{r^2} \Delta_{\mathbb{S}^{d-1}}$, we obtain 
    \[
        K^2 - K = - \Delta_{\mathbb{S}^{d-1}} + \frac{(d-1)(d-3)}{4},
    \]
    which is the identity. Since $- \Delta_{\mathbb{S}^{d-1}}$ has compact resolvent, so does $K$. If  $K f = 0$, we get $\int_{\mathbb{S}^{d-1}} |\nabla_{\mathbb{S}} f|^2 + \frac{(d-1)(d-3)}{4} |f|^2 = 0$, so for $d>3$ we directly see that $f=0$. For $d=3$, observe that $f$ is constant, but then $\widetilde{K} f = 0$, so $0 = K f = \frac{d-1}{2} f$, and $f = 0$ in this case as well.
\end{proof}

\begin{proof}[Proof of Proposition~\ref{prop:radialV}]
    Let $\psi \in H^1(\R^d, \C^N)$ with $(\Dirac_m - V \beta) \psi = 0$. Multiplying by $\beta$, the equation reads
    \begin{equation} \label{eq:zero_mode_gamma}
        (\boldsymbol{\gamma} \cdot \nabla) \psi = M(r) \psi, \qquad \text{with} \qquad \gamma_j := \ri \beta \alpha_j.
    \end{equation}
    The matrices $\gamma_j$ are hermitian and satisfy the Clifford relations $\gamma_j \gamma_k + \gamma_k \gamma_j = 2 \delta_{jk}$. In addition, $\gamma_j \gamma_k = \alpha_j \alpha_k$ for $j \neq k$, so the spin--orbit operator is unchanged, that is $K_γ = K$. Since the proof of Lemma~\ref{lem:polar} only uses the Clifford relations, it holds with $\balpha$ and $S$ replaced by $\boldsymbol{\gamma}$ and $S_\gamma := \boldsymbol{\gamma} \cdot \omega$ respectively. 
    
    \medskip

    We set $u(r)(\cdot) := r^{\frac{d-1}{2}} \psi(r \, \cdot)$, so that $u$ satisfies the equation
    \begin{equation} \label{eq:radial_u}
        u' = \frac{1}{r} K u + M(r) S_\gamma u, \quad \text{in} \quad L^2(\bbS^{d-1}, \C^N).
    \end{equation}

    \medskip

    Since $S_γ K = - K S_\gamma$ and since $K$ has compact resolvent and is injective, its spectrum is discrete and symmetric with respect to $0$. Fix $\kappa > 0$ an eigenvalue of $K$, and let $P_{\pm}$ be the orthogonal projectors onto $\operatorname{Ker}(K \mp \kappa)$. Since $S_γ K = - K S_\gamma$, we have $S_γ P_\pm = P_\mp S_\gamma$. On the other hand, since $M(r)$ is radial, we have $K M(r) = M(r) K$, so $M(r)$ commutes with $P_\pm$. We set $u_\pm := P_\pm u$. Projecting~\eqref{eq:radial_u} on $P_\pm$ leads to
    \[
        u_+' = \frac{\kappa}{r} u_+ + M(r) \, S_\gamma u_-, \qquad u_-' = - \frac{\kappa}{r} u_- + M(r) \, S_\gamma u_+.
    \]
    In dimension $d = 3$, with $\psi = f(r) \Omega_{\kappa, m} + g(r) \Omega_{-\kappa, m}$ in the basis of spinor spherical harmonics, this is the usual radial system~\cite{Tha-92}.

    \medskip

    We now set $F(r) := \| u_+(r) \|^2_{L^2(\mathbb{S}^{d-1})}$ and $G(r) := \| u_-(r) \|^2_{L^2(\mathbb{S}^{d-1})}$. Differentiating gives
    \[
        \begin{cases}
            F'(r) & = 2 \dfrac{\kappa}{r} F + 2 M(r) {\rm Re} \, ⟨ u_+, S_γ u_- ⟩_{L^2(\bbS^{d-1})} \\
            G'(r) & = - 2 \dfrac{\kappa}{r} G + 2 M(r) {\rm Re} \, ⟨ u_-, S_γ u_+ ⟩_{L^2(\bbS^{d-1})}
        \end{cases}
    \]
    Since $S_\gamma$ is hermitian, we have ${\rm Re} \,  ⟨ u_+, S_\gamma u_- ⟩ = {\rm Re} \, ⟨ u_-, S_\gamma u_+ ⟩$. Subtracting, we get that $W := F - G$ solves
    \[
        W' = \frac{2 \kappa}{r} \left( F + G \right) \ge 0.
    \]
    So $W$ is nondecreasing. Recall that since $\psi$ is an eigenfunction, it is bounded by Lemma~\ref{lem:regularity_eigenfunctions}, and we get 
    \[
        |W(r)| \le \| u(r) \|^2 \le | \mathbb{S}^{d-1} |  r^{d-1} \| \psi \|_{L^\infty}^2.
    \]
    Taking the limit $r → 0$ shows that $W(0^+) = 0$, hence $W \ge 0$. On the other hand, we have
    \[
        \int_0^\infty |W| \rd r \le \int_0^\infty \| u(r) \|^2 \rd r = \| \psi \|_{L^2}^2 < \infty.
    \]
    We deduce that $W = 0$, hence $W' = 0$, and finally $F = G = 0$. Since this holds for all eigenvalues $\kappa > 0$, we deduce that $u = 0$, hence $\psi = 0$.
\end{proof}

\appendix
\addtocontents{toc}{\protect\setcounter{tocdepth}{1}}  

\section{Explicit solutions in dimension \texorpdfstring{$d=1$}{d=1}}
\label{sec:appendix:explicit_1D}

In dimension $d = 1$, the optimal potentials can be computed explicitly. We use $\balpha = \sigma_1$, $\beta = \sigma_3$, and set
\[
    q := \frac{1}{p-1}, \qquad \kappa := \sqrt{m^2 - \Lambda^2}, \qquad \nu := \frac{m - \Lambda}{m + \Lambda} \in (0,1).
\]

\begin{theorem}[The one-dimensional case] \label{thm:explicit}
    Let $d = 1$ and $p > 1$. For all $\alpha > 0$, the optimal potential is unique up to translations, and is given by
    \begin{equation} \label{eq:explicit_V}
        V(x) = \frac{p}{p-1} \, \frac{m^2 - \lambda^2}{m + \Lambda \cosh \big( \frac{2 \kappa}{p-1} x \big)} \qquad \text{with} \qquad \lambda = \Lambda(\alpha).
    \end{equation}
    In particular, it is positive, even and decreasing on $\R^+$. The function $\alpha \mapsto \Lambda(\alpha)$ is the inverse of the function $\Lambda \mapsto \alpha(\Lambda) := \nrm{V}{L^p}$ on $(0, m)$, with
    \begin{equation} \label{eq:explicit_alpha}
        \alpha(\Lambda) = \frac{ (q+1)(m-\Lambda)}{(q \kappa)^{1/p}}
        \left( \frac{\sqrt{\pi}\,\Gamma(p)}{\Gamma\!\left(p+\tfrac12\right)}\,
        {}_2F_1\!\left(p,\tfrac12;\,p+\tfrac12;\,\nu\right) \right)^{1/p}.
    \end{equation}
\end{theorem}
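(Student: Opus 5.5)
We reduce the one-dimensional problem to an explicit ODE computation. By Theorem~\ref{thm:critical_alpha_dim12}, $\alpha_*(p,1)=+\infty$, so for every $\alpha>0$ we have $\Lambda:=\Lambda(\alpha)\in(0,m)$. Theorem~\ref{thm:existence} then provides an optimal $V$ with $\nrm{V}{L^p}=\alpha$, and any normalized eigenspinor $\psi$ satisfies the Euler--Lagrange system~\eqref{eq:EL_intro}. Writing $\psi=(\psi_1,\psi_2)$ with $\alpha_1=\sigma_1$ and $\beta=\sigma_3$, the system reads
\[
-\ri\psi_2'+(m-V-\Lambda)\psi_1=0,\qquad -\ri\psi_1'-(m-V+\Lambda)\psi_2=0,\qquad \mu|V|^{p-2}V=|\psi_1|^2-|\psi_2|^2 .
\]

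\textbf{Step 1: reduction to a real planar ODE.} Using the symmetry $T=\sigma_3\cK$ together with the fact that eigenvalues are simple in 1D (the ODE solution space decaying at $+\infty$ is one-dimensional), we may take $\psi_1=a$ and $\psi_2=\ri b$ with $a,b$ real. This gives the autonomous Hamiltonian system
\[
b'=-(M-\Lambda)a,\qquad a'=(M+\Lambda)b,\qquad M=m-V,\qquad V=\mu^{-q}\,|a^2-b^2|^{q-1}(a^2-b^2).
\]
Its first integral is
\[
H(a,b)=\tfrac{m}{2}(a^2-b^2)-\tfrac{\Lambda}{2}(a^2+b^2)-\mu^{-q}\tfrac{1}{q+1}|a^2-b^2|^{q+1}.
\]
Decay at infinity forces $H\equiv 0$, so $\psi$ is a homoclinic orbit to the origin.

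\textbf{Step 2: the orbit and the potential.} Set $s:=a^2-b^2$. On the orbit $H=0$ we get $\Lambda(a^2+b^2)=ms-\tfrac{2}{q+1}\mu^{-q}|s|^{q+1}$. Near $0$ the linearization has the eigendirection with $a^2>b^2$ (the decaying direction $e^{-\kappa|x|}$ has $a/b=\sqrt{(m+\Lambda)/(m-\Lambda)}$), so $s>0$ there. Since $a^2+b^2\ge|s|$ and $s$ cannot change sign without $a^2+b^2$ forcing $H\neq0$, we conclude $s>0$ along the whole orbit. Hence $V=\mu^{-q}s^q>0$.

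Next we derive an ODE for $s$ alone. We have $s'=2aa'-2bb'=2ab\,(2M)$... and more precisely $s'=4mab-4Vab$ up to the $\Lambda$ terms. Then $(ab)^2$ is expressed through $s$ and $a^2+b^2$ using $H=0$. This yields a first-order equation $s'^2=\Phi(s)$. Passing to $V$ (a power of $s$), the equation should take the form $(V')^2=c_1V^2+c_2V^3+c_3V^4$ with $c_1=(2\kappa/(p-1))^2$. Equations of this type integrate to $V=A/(B+C\cosh(\gamma x))$. Matching the constants gives~\eqref{eq:explicit_V}, with the scale $\mu$ eliminated because the orbit is determined up to the rescaling $\phi=\mu^{-1/2}\psi$. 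The solution is unique up to translation (the choice of the base point on the orbit), and $V$ is even and decreasing on $\R^+$.

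\textbf{Step 3: uniqueness and the formula for $\alpha$.} Any optimal potential must arise as above, so it equals~\eqref{eq:explicit_V} up to translation, with $\lambda=\Lambda(\alpha)$. Computing its $L^p$ norm is a routine integral: substitute $t=\tanh$ or $y=\cosh(2\kappa x/(p-1))$ and obtain a Beta-type integral in the form of Euler's integral for ${}_2F_1$ with argument $\nu$, which gives~\eqref{eq:explicit_alpha}. Since $\Lambda$ is a continuous strictly decreasing bijection $(0,\infty)\to(0,m)$ by Theorem~\ref{thm:existence} and Theorem~\ref{thm:critical_alpha_dim12}, and $\alpha=\nrm{V}{L^p}$ for the optimizer, the map $\Lambda\mapsto\alpha(\Lambda)$ is its inverse.

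The main obstacle is Step 2: showing rigorously that $s$ does not change sign, and carrying out the algebra that turns the conserved quantity into the closed-form $\cosh$ profile. Reducing to real $a,b$ needs care, but the Wronskian argument for simplicity should suffice. The second equation is also automatic once $\lambda$ is identified with $\Lambda$, since $m^2-\lambda^2=\kappa^2$.
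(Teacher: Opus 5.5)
Your outline matches the paper's: real reduction via time reversal and simplicity, a first integral vanishing on the decaying orbit, positivity of $s=a^2-b^2$, the $L^p$ integral via $t=\tanh(q\kappa x)$ and Euler's formula, and inversion using strict monotonicity. You obtain the profile differently. The paper uses the level set $\{E=0\}$ to show $0\le s\le s_{\max}$, so $\{E=0\}\setminus\{0\}$ consists of two homoclinic loops exchanged by $\psi\mapsto-\psi$. This gives uniqueness up to translation and sign, and the paper then merely \emph{checks} an explicit spinor built from $\tanh(q\kappa x)$. You instead eliminate the spinor and integrate a scalar equation for $V$, which derives the formula rather than verifying a guessed one. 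Once corrected (see below), this gives $(V')^2=4q^2V^2\big[(m-\tfrac{V}{q+1})^2-\Lambda^2\big]$. So $w=1/V$ satisfies $w''=4q^2\kappa^2w-\tfrac{4q^2m}{q+1}$, hence $w=\tfrac{m+\Lambda\cosh(2q\kappa(x-x_0))}{(q+1)\kappa^2}$, which is \eqref{eq:explicit_V}. The price is uniqueness. The first-order equation $(V')^2=V^2P(V)$ alone does not give it, since $V$ could a priori stay at its maximum $(q+1)(m-\Lambda)$ on an interval. You must rule this out from the system: there $s'=0$ forces $b=0$, and then $b'=0$ forces $V=m-\Lambda\neq(q+1)(m-\Lambda)$. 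The paper gets uniqueness directly from the phase portrait.

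The computations you sketch are wrong in three places, and as written would not produce this equation. First, the second equation is $a'=-(M+\Lambda)b$. With your sign the linearization at $0$ is a center ($a''=-\kappa^2a$), which contradicts the decaying direction you invoke later. Second, your $H$ is not conserved: along the correct flow, $\tfrac{d}{dx}H=2\Lambda Vab$. The correct first integral is $\tfrac12\big(ms-\Lambda(a^2+b^2)\big)-\tfrac{\mu^{-q}}{2(q+1)}|s|^{q+1}$, i.e. half the paper's $E$; your coefficient would give the wrong prefactor in $V$. Third, the identity your approach actually rests on is that the $M$-terms cancel: $s'=-4\Lambda ab$, not $4(m-V)ab$. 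Then $(s')^2=4\Lambda^2\big((a^2+b^2)^2-s^2\big)$, and $H=0$ reads $\Lambda(a^2+b^2)=s\big(m-\tfrac{V}{q+1}\big)$. Hence $(s')^2=4s^2\big[(m-\tfrac{V}{q+1})^2-\Lambda^2\big]$, and with $V'/V=qs'/s$ this gives the equation above. The constants are $c_1=(2q\kappa)^2$, $c_2=-8q^2m/(q+1)$ and $c_3=4q^2/(q+1)^2$, and $\mu$ indeed drops out. Finally, the sign argument can be made direct, with no linearization at infinity. On $\{H=0\}$, a point with $s<0$ would make $\Lambda(a^2+b^2)$ negative. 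A point with $s=0$ forces $a=b=0$ there, hence $\psi\equiv0$ by uniqueness for the linear ODE.
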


It is interesting to compare with the Dirac case $\Dirac_m - V$ studied in~\cite[Theorem 1.2]{DolGonPiz-23}, where the optimal potential is
\[
    V(x) = \frac{p}{p-1} \, \frac{m^2 - \lambda^2}{m \cosh \big( \frac{2\kappa}{p-1} x \big) + \lambda}:
\]
the roles of $m$ and $\lambda$ in the denominator are exchanged.

\begin{proof}
    By Theorem~\ref{thm:existence}, there is an optimal potential $V$ for all $\alpha > 0$, with $\Lambda := \Lambda(\alpha) \in (0, m)$.

    \smallskip

    \noindent \textbf{Step 1: real reduction.} In dimension $d = 1$, the eigenvalues of $\Dirac_m - V\beta$ in $(-m, m)$ are simple (the solutions of the eigenvalue equation which decay at $+\infty$ form a one-dimensional space), and the anti-unitary operator $T = \sigma_3 \cK$ commutes with $\Dirac_m - V\beta$ and satisfies $T^2 = 1$ (see Proposition~\ref{prop:symmetries}). So the normalized eigenspinor $\psi := \mu^{-1/2}\phi$ of Theorem~\ref{thm:existence} can be chosen such that $T \psi = \psi$, that is of the form $\psi = (v, \ri u)^T$, with $u, v$ real-valued. With $f(s) := |s|^{q-1} s$, the Euler--Lagrange equation reads $V = f(v^2 - u^2)$, and the eigenvalue equation becomes
    \begin{equation} \label{eq:ODE_1d}
        \begin{pmatrix} u \\ v \end{pmatrix}' = - \begin{pmatrix} 0 & m - f(v^2 - u^2) - \Lambda \\ m - f(v^2 - u^2) + \Lambda & 0 \end{pmatrix} \begin{pmatrix} u \\ v \end{pmatrix}.
    \end{equation}
    Solutions to this system correspond to solitary waves of the one-dimensional nonlinear Dirac equation with Soler-type nonlinearity. In the cubic case $q = 1$, that is $p = 2$, this is the massive version of the Gross--Neveu model~\cite{GroNev-74}, whose solitary waves were computed explicitly in~\cite{LeeKuoGav-75} (see also~\cite{ChuPel-06} for explicit solitary waves of related coupled-mode systems). For general powers $q > 0$, explicit solitary waves can be found in~\cite{CooKhaMihSax-10, MerQuiCooKhaSax-12}.

    \smallskip

    \noindent \textbf{Step 2: uniqueness.} Let $G(s) := |s|^{q+1}/(q+1)$, so that $G' = f$. A direct computation shows that the energy
    \[
        E(u, v) := m(v^2 - u^2) - G(v^2 - u^2) - \Lambda (u^2 + v^2)
    \]
    is constant along solutions of~\eqref{eq:ODE_1d}. Since localized solutions tend to $0$ at infinity, they lie on the level set $\{E = 0\}$. With $s := v^2 - u^2$, this reads
    \[
        u^2 + v^2 = \frac{m s - G(s)}{\Lambda}.
    \]
    For $s < 0$, the right-hand side is negative, which is impossible. For $s \ge 0$, the constraint $u^2 + v^2 \ge s$ gives
    \[
        (m - \Lambda) s \ge G(s), \qquad \text{that is} \qquad 0 \le s \le s_{\max} := \big( (q+1)(m - \Lambda) \big)^{1/q}.
    \]
    So $\{E = 0\} \setminus \{0\}$ is the union of two loops, one in $\{v > 0\}$ and one in $\{v < 0\}$, which meet at the origin, and are exchanged by $(u,v) \mapsto (-u, -v)$. The trajectory of a localized solution is one of these homoclinic loops. Hence the localized solution is unique up to translations and sign, and $v^2 - u^2 \ge 0$ everywhere. In particular, $V = (v^2 - u^2)^q$ is positive.

    \medskip

    \noindent \textbf{Step 3: explicit formula.} One checks that the unique solution with $v$ even and $u$ odd is
    \[
        \psi(x) = \begin{pmatrix} v(x) \\ \ri u(x) \end{pmatrix} = \frac{1}{\sqrt{1-\nu T^2}} \left[(q+1)(m-\Lambda)\frac{1 - T^2}{1 - \nu T^2}\right]^{\frac1{2q}} \begin{pmatrix} 1 \\ \ri \sqrt{\nu} \, T \end{pmatrix},
    \]
    with $T := \tanh(q\kappa x)$.
    This gives
    \[
        V = f(v^2 - u^2) = (q+1)(m-\Lambda) \frac{1 - T^2}{1 - \nu T^2},
    \]
    which is~\eqref{eq:explicit_V}. Finally, with the change of variables $t = \tanh(q \kappa x)$ and then $t^2 = w$, we get
    \[
        \int_\R V^p = \frac{ \left((q+1)(m-\Lambda)\right)^{p}}{q \kappa} \int_{-1}^1 \frac{(1-t^2)^{p-1}}{(1-\nu t^2)^p} \, dt = \frac{ \left((q+1)(m-\Lambda)\right)^{p}}{q \kappa} \int_0^1 \frac{w^{-1/2}(1-w)^{p-1}}{(1-\nu w)^p} \, dw ,
    \]
    and Euler's integral representation of the hypergeometric function gives
    \[
        \int_0^1 \frac{w^{-1/2}(1-w)^{p-1}}{(1-\nu w)^p} \, dw = B\big(\tfrac12, p\big) \, {}_2F_1\big(p, \tfrac12; p + \tfrac12; \nu\big) \qquad \text{with} \qquad B\big(\tfrac12, p\big) = \frac{\sqrt{\pi}\,\Gamma(p)}{\Gamma(p + \frac12)}.
    \]
    This gives~\eqref{eq:explicit_alpha}. Since for each $\alpha > 0$ the optimal potential is of the form~\eqref{eq:explicit_V} with $\Lambda = \Lambda(\alpha)$ and $\nrm{V}{L^p} = \alpha$, we have $\alpha(\Lambda(\alpha)) = \alpha$.
\end{proof}

Inverting the exact formula~\eqref{eq:explicit_alpha} shows that $α ↦ \Lambda(\alpha)$ decays exponentially fast as $α → \infty$. We leave the details of the computation to the reader.

\begin{remark}[Jackiw--Rebbi kink]
    Recall that zero modes appear in dimension one only when $M := m -V$ has different signs at $\pm \infty$, see~\cite{JacReb-76}. This cannot happen here, since $M \to m$ at infinity. A sum of two kinks can be obtained in our setting by taking $V = 2m \1_{[-L, L]}$, (such that $\alpha = 2m L^{1/p}$). This $V$ produces eigenvalues approaching zero exponentially fast as $L \to \infty$, as shown e.g. in \cite{LuWatWei-20}.
\end{remark}

The function $\alpha \mapsto \Lambda(\alpha)$ is plotted in Figure~\ref{fig:Lambda_alpha} for several values of $p$. As $p \to \infty$, the curves approach $\Lambda(\alpha) = (m - \alpha)_+$, which is consistent with the bound $\lambda_1(V) \ge m - \nrm{V}{L^\infty}$ of Lemma~\ref{lem:bounds_lambda1}. As $p \to 1$, they approach $\Lambda(\alpha) = m / \cosh(\alpha)$. This limit can be understood with the following result, which is sharp.

\begin{proposition}[The case $p = d = 1$] \label{prop:p=1}
    Let $V \in L^1(\R, \R) \cap L^{p}(\R, \R)$ for some $p > 1$. Then all eigenvalues $\lambda \in (-m, m)$ of $\Dirac_m - V\beta$ satisfy
    \[
        \nrm{V_+}{L^1} \ge \operatorname{arccosh} \Big( \frac{m}{|\lambda|} \Big), \qquad \text{that is} \qquad |\lambda| \ge \frac{m}{\cosh(\nrm{V_+}{L^1})}.
    \]
    In particular, $\lambda_1(V) \ge m / \cosh(\nrm{V}{L^1})$. Equality is attained in the limit $V \to \alpha \delta_0$.
\end{proposition}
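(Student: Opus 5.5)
The plan is to reduce the eigenvalue equation to a scalar Prüfer-angle equation. The bound $\operatorname{arccosh}(m/|\lambda|)$ should then appear as the "angular distance" the potential must push the angle across, against the free drift.

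\textbf{Step 1: reduction to an angle equation.} By the charge-conjugation symmetry $C = \sigma_1\cK$ of Proposition~\ref{prop:symmetries}, the spectrum is symmetric, so it is enough to treat $\lambda \in (0,m)$ (the case $\lambda = 0$ is excluded by Theorem~\ref{thm:no_zero_modes}). As in Step~1 of the proof of Theorem~\ref{thm:explicit}, the eigenspinor can be taken of the form $\psi = (v, \ri u)^T$ with $u,v$ real and continuous (Lemma~\ref{lem:regularity_eigenfunctions}). With $M = m - V$, the eigenvalue equation reads
\[
u' = -(M-\lambda)v, \qquad v' = -(M+\lambda)u .
\]
Since $(u,v)$ never vanishes (ODE uniqueness), I will write $u = \rho\sin\theta$, $v = \rho\cos\theta$ with a continuous lift $\theta$. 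A direct computation gives
\[
\theta' = \lambda - M\cos 2\theta = \bigl(\lambda - m\cos 2\theta\bigr) + V\cos 2\theta .
\]
Let $\theta_0 \in (0,\pi/4)$ be defined by $\cos 2\theta_0 = \lambda/m$, so that $\tan\theta_0 = \sqrt{(m-\lambda)/(m+\lambda)}$. Since $V \in L^p$, the solution behaves like $e^{\kappa x}$ at $-\infty$ and like $e^{-\kappa x}$ at $+\infty$, with $\kappa = \sqrt{m^2 - \lambda^2}$. Hence $u/v \to -\tan\theta_0$ at $-\infty$ and $u/v \to +\tan\theta_0$ at $+\infty$. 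I would justify this with a Levinson-type asymptotic argument, using $\int_x^{x+1}|V| \to 0$. Consequently $\theta(-\infty) \equiv -\theta_0$ and $\theta(+\infty) \equiv \theta_0 \pmod \pi$. These are exactly the zeros of the free drift $\lambda - m\cos 2\theta$.

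\textbf{Step 2: cost of crossing.} On the interval $(-\theta_0,\theta_0)$ we have $\cos 2\theta > \lambda/m > 0$. Hence the free drift is negative there, and
\[
\theta' \le V\cos 2\theta \le V_+\cos 2\theta .
\]
Let $H(\theta) := \int_0^\theta \sec 2t\,\rd t = \frac12\ln\bigl(\sec 2\theta + \tan 2\theta\bigr)$. Then $(H(\theta))' \le V_+$ whenever $\theta(x) \in (-\theta_0,\theta_0)$. Suppose the lift must cross this interval upward. I would choose $x_1 < x_2$ with $\theta(x_1) = -\theta_0$, $\theta(x_2) = \theta_0$ and $\theta \in (-\theta_0,\theta_0)$ on $(x_1,x_2)$ (allowing $x_1 = -\infty$ or $x_2 = +\infty$ via limits). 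Integrating on this interval gives
\[
\int V_+ \ge H(\theta_0) - H(-\theta_0) = \ln(\sec 2\theta_0 + \tan 2\theta_0) = \operatorname{arccosh}(m/\lambda).
\]
The same computation with $V = \alpha\delta_0$, where $\theta$ jumps according to $\rd\theta/\cos 2\theta = \alpha\,\rd x$, shows equality in the limit. I would check this against the explicit 1D optimizers of Theorem~\ref{thm:explicit} as $p \to 1$.

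\textbf{Main obstacle: the winding number.} The net change $\theta(+\infty) - \theta(-\infty)$ equals $2\theta_0 + k\pi$ for some $k \in \mathbb{Z}$. If $k \ge 0$, the lift necessarily crosses some translate $(-\theta_0,\theta_0) + j\pi$ upward, and Step~2 applies. The case $k < 0$ is the delicate point.

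My first attempt would be to show directly that $k < 0$ is impossible, or equally costly, by examining a downward crossing of $(\theta_0 - \pi, -\theta_0)$. There the free drift is nonnegative. On the part where $\cos 2\theta < 0$, a decrease of $\theta$ forces $V > 0$ and $\theta' \ge -V_+|\cos 2\theta|$. On the part where $0 < \cos 2\theta < \lambda/m$, a decrease forces $V < 0$, which is free of charge. So on the window around $-\pi/2$ where $\cos 2\theta < -\lambda/m$, the same $H$-argument bounds $\int V_+$ from below by $\operatorname{arccosh}(m/\lambda)$.

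If this bookkeeping fails, the fallback is to combine the spatial reflection $x \mapsto -x$, which exchanges $u$ and $-u$, with the symmetry $\lambda \mapsto -\lambda$. The aim is to map a solution with $k < 0$ to one with $k \ge 0$ at the same value of $\|V_+\|_{L^1}$.
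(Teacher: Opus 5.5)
Your proposal is correct and is essentially the paper's proof: the same Pr\"ufer angle $\theta' = \lambda - (m-V)\cos 2\theta$, the same asymptotic angles $\theta(-\infty) = -\theta_0$ and $\theta(+\infty) = \theta_0 + k\pi$, and the same estimate $\theta' \le V_+ \cos 2\theta$ on $(-\theta_0,\theta_0)$, integrated to give $\operatorname{arccosh}(m/\lambda)$. For the case $k<0$ that you single out, the paper notes that on the lines $\{\cos 2\theta = 0\}$ one has $\theta' = \lambda > 0$ whatever $V$ is, so $\theta$ never drops below $-\pi/4$ and $k \ge 0$. Your window estimate around $-\pi/2$ is nevertheless valid; pushed to all of $\{\cos 2\theta < 0\}$ it even forces $\int V_+ = \infty$, i.e.\ the downward crossing is impossible. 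So the reflection fallback is not needed, and by itself the reflection $x \mapsto -x$ would not change $k$ anyway.
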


\begin{proof}
   By symmetry of the spectrum, we may assume $\lambda \in [0, m)$. As in Step~1 of the proof of Theorem~\ref{thm:explicit}, the eigenfunction can be written in the form
    \[
        \psi = \rho(x) \begin{pmatrix} \cos \theta(x) \\ \ri \sin \theta(x) \end{pmatrix} \qquad \text{with} \qquad \theta' = \lambda - (m - V) \cos(2 \theta),
    \]
    which can be obtained from the ODE system for the components as in \eqref{eq:ODE_1d}.
    Solutions approaching $0$ at $\pm \infty$ (where $V= 0$) shows that $\tan \theta(x) \to \mp \sqrt\nu$ as $x \to \mp \infty$, with $\nu = (m - \lambda)/(m + \lambda)$. Let $\theta_0 := \arctan \sqrt{\nu} \in (0, \pi/4]$, so that $\cos(2 \theta_0) = \lambda / m$, and choose the determination of $\theta$ such that $\theta(-\infty) = -\theta_0$. Then $\theta(+\infty) = \theta_0 + k \pi$ for some $k \in \Z$. On the lines $\{\cos(2\theta) = 0\}$, we have $\theta' = \lambda$, so $\theta$ can only cross them upwards since $\lambda > 0$.  Hence $\theta > -\pi/4$ everywhere, $k \ge 0$, and $\theta$ crosses the whole interval $(-\theta_0, \theta_0)$ upwards. On this interval, $\cos(2\theta) > \lambda/m$, so $$\theta' \le \lambda - m\cos(2\theta) + V_+ \cos(2 \theta) \le V_+ \cos(2\theta)$$ 
    hence
    \[
        \int_\R V_+ \ge \int_{-\theta_0}^{\theta_0} \frac{d\theta}{\cos (2 \theta)} = 2 \operatorname{artanh} (\tan \theta_0) = 2 \operatorname{artanh}(\sqrt{\nu}) = \ln \frac{m + \kappa}{\lambda} = \operatorname{arccosh}\Big( \frac{m}{\lambda} \Big),
    \]
    with $\kappa = (m^2 - \lambda^2)^{1/2}$. For $V = \alpha \delta_0$, the angle $\theta$ is constant equal to $\mp \theta_0$ on $\R_\mp$, and jumps across $0$ according to $d\theta/\cos(2\theta) = V dx$, so that the inequality is an equality. On the other hand, if $\theta$ takes values in $(-\theta_0, \theta_0)$ on a set of positive measure, the inequality is strict.
\end{proof}

\section{Numerical simulations in dimension \texorpdfstring{$d=2$}{d=2}}
\label{sec:appendix:numerics}

In this appendix, we study numerically the optimal potentials in dimension $d = 2$. We take $m = 1$ and, unless otherwise stated, $p = 3$. The code is available from the authors upon request.

\subsection*{The algorithm}
We follow the algorithm in~\cite{DolGonPiz-23}, which uses an iteration at the Birman-Schwinger level. Recall that we set, for $V ∈ L^p(\R^d, \R)$,
\[
    K_V(λ) := \sqrt{| V |} β \, {\rm sgn}(V) (\Dirac_m - \lambda)^{-1} \sqrt{| V |}.
\]
Since $K_{tV}(\lambda) = t K_V(\lambda)$ for all $t \in \R$, the Birman--Schwinger principle shows that $\lambda \in (-m,m)$ is an eigenvalue of $\Dirac_m - t V \beta$ if and only if $1/t$ is an eigenvalue of $K_V(\lambda)$. Introducing $\alpha(\lambda)$ the inverse map of $\Lambda(\alpha)$, we deduce as in~\cite{DolGonPiz-23} that 
\[
    \forall \lambda \in (0, m), \qquad \frac{1}{\alpha(\lambda)} = \max \Big\{ \mu_+ \big( K_W(\lambda) \big), \quad W \in L^p(\R^d, \R), \quad \nrm{W}{L^p} = 1 \Big\},
\]
where $\mu_+(K)$ denote the largest positive eigenvalue of the (compact) operator $K$, with $\mu_+(K) := 0$ if $K$ has no positive eigenvalue. In particular, $1 / \alpha_*(p,d) = \sup_W \mu_+(K_W(0))$. If $W$ is a maximizer, then
\[
    V := \frac{W}{\mu_+(K_W(\lambda))} \qquad \text{is an optimal potential for} \qquad \alpha = \alpha(\lambda) = \frac{1}{\mu_+(K_W(\lambda))}.
\]

\medskip

The main difference with respect to~\cite{DolGonPiz-23} is that the operator $K_V(\lambda)$ is not self-adjoint, so $\mu_+$ has no variational characterization. It is unclear that for a given $V$, we have $\mu_+(K_V) > 0$. It may happen that $K_V$ has no eigenvalue on the real line. But this will be the case whenever $λ$ is in the spectrum of $(\Dirac_m - t V \beta)$ for some $t > 0$. This happens in particular for the optimal potentials.We look for them with a self-consistent iteration. With $\cN(f) := f / \nrm{f}{L^p}$, we set
\[
    \begin{cases}
        \mu_k := \mu_+ \big( K_{W_k}(\lambda) \big), \qquad 
        \phi_k := (\Dirac_m - \lambda)^{-1} |W_k|^{1/2} \sign(W_k) \beta u_k, \\[4pt]
        W_{k+1} := \cN \Big( (1 - t) W_k + t \, \cN \big( | \sprod{\phi_k}{\beta\phi_k} |^{\frac{2-p}{p-1}} \sprod{\phi_k}{\beta\phi_k} \big) \Big),
    \end{cases}
\]
where $u_k$ is an eigenvector of $K_{W_k}(\lambda)$ for the eigenvalue $\mu_k$, and $t \in (0,1]$ is a damping parameter. We use $t = 1/2$ below. Since the problem is translation invariant, we also translate $W_{k+1}$ so that its center of mass is at the origin.

\medskip

All our simulations start from random initial data, and we can check that the sequence $(\mu_k)$ is increasing. In dimension $d = 1$, our algorithm reproduces the explicit solutions of Theorem~\ref{thm:explicit} up to numerical and discretization errors. All the critical points that we computed satisfy the virial identity of Proposition~\ref{prop:virial}, up to $10^{-5}$.

\medskip

We also numerically compute the optimal {\em radial} potential, which we denote by $V_{\rm rad}$. In order to compute this potential, we solve the radial equation using a shooting method. We take $\beta = \sigma_3$ and $\Dirac_m = \sigma_1 (-i \partial_1) + \sigma_2 (-i\partial_2) + m \sigma_3$, and we look for eigenfunctions of the form $\psi(r, \theta) = (v(r),  \ri u(r) e^{i \theta})^T$. This is the two-dimensional version of the Wakano ansatz~\cite{Wak-66} (see also~\cite{CueBouCom-18}), restricted to the lowest angular momentum channel. We checked numerically that, in the whole range of our simulations, this channel is the optimal one. In this channel, we have $\sprod{\psi}{\beta\psi} = v^2 - u^2$ is radial. With $M := m - V$, the equation $(\Dirac_m - V \beta) \psi = \lambda \psi$ becomes
\[
    \begin{cases}
        u' + \frac{u}{r} = (\lambda - M) v \\
        v' = - (\lambda + M) u.
    \end{cases}
\]
For a radial $W$ with $\nrm{W}{L^p} = 1$, $1/\mu_+(K_W(\lambda))$ is the smallest $t > 0$ for which this system with $V = t W$ has a solution which is regular at $r = 0$ and decays at infinity. We integrate the regular solution $(v, u)(0) = (1, 0)$ with a fourth-order Runge--Kutta scheme. The potential $V_{\rm rad}$ is then obtained with the same self-consistent iteration as above, among radial functions.

\medskip

\subsection*{Results} We see several interesting regimes which we describe below. The different regimes are described with the $λ ∈ (0, m = 1)$ variable, but could be translated in the $\alpha$ variable. Recall that the smaller the $\lambda$ the bigger the $\alpha$. We find the description with $\lambda$ easier to picture. In what follows, we fix $d = 2$ and $p = 3$.

\medskip

\noindent\underline{The optimal potentials are radial decreasing for $\lambda ∈ (0.126, 1)$}. We observe that random initial data always converge to a radial potential, which coincides with $V_{\rm rad}$. A typical run is displayed in Figure~\ref{fig:movie}. 

\begin{figure}[ht]
\centering
\includegraphics[width=\linewidth]{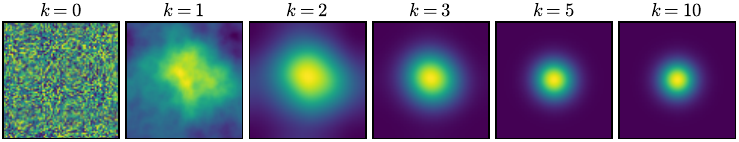}
\caption{The iterates $W_k$ for some values of $k$, in the case $d = 2$, $p = 3$ and $\lambda = 1/2$, starting from a purely random $W_0$.}
\label{fig:movie}
\end{figure}

The profiles of the radial critical points $V_{\rm rad}$ are displayed in Figure~\ref{fig:radial}. We observe that these potentials are positive, and radially decreasing for $\lambda \ge 0.126$. We also see that the effective mass $M = m - V$ takes negative values for $\lambda < \lambda_M \approx 0.55$.

\begin{figure}[ht]
\centering
\includegraphics[width=0.75\linewidth]{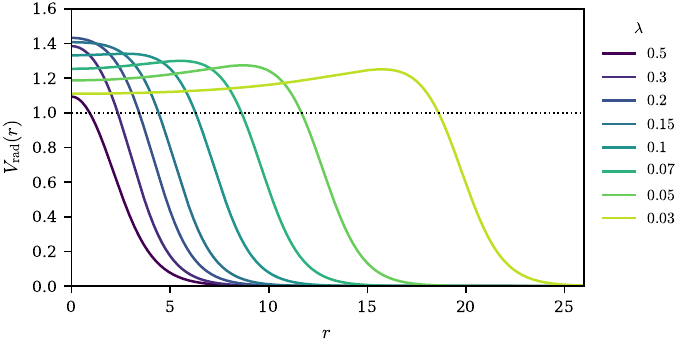}
\caption{The radial critical points $V_{\rm rad}$ for $d = 2$, $p = 3$ and several values of $\lambda$. The dotted line is $V = m$.}
\label{fig:radial}
\end{figure}

\medskip

\noindent\underline{The optimal potentials are radial, but not radially decreasing, for $0.104 < \lambda < 0.126$}. We then observe that the optimal radial potentials $V_{\rm rad}$ cease to be radial decreasing for $λ < 0.126$, see again Figure~\ref{fig:radial}. For $\lambda > 0.104$, the optimal potential still seems to be radial and positive. 

\medskip

\noindent\underline{The optimal potentials are no longer radial for $λ < 0.104$.} We display in Figure~\ref{fig:symmetry_breaking} the computed optimal potentials for some small values of $\lambda$. These potentials are obtained from several random initial data (and keeping the optimal result). We checked that most runs converge to similar potentials, with $L^p$ norms which agree up to $3 \cdot 10^{-4}$. We report in Table~\ref{tab:alpha_2d} the computed values of $\alpha(\lambda)$ and $\alpha_{\rm rad}(\lambda)$ for different values of $\lambda$.

\begin{table}[ht]
\centering
\begin{tabular}{|c|c|c|c|c|c|c|c|c|}
    \hline
    $\lambda$ & $0.1$ & $0.09$ & $0.08$ & $0.07$ & $0.06$ & $0.05$ & $0.04$ & $0.03$  \\
    \hline
    $\alpha_{\rm rad}(\lambda)$ & $6.5721$ & $6.9456$ & $7.3870$ & $7.9209$ & $8.5861$ & $9.4487$ & $10.6320$ & $12.4015$ \\
    \hline
    $\alpha(\lambda)$ & $6.5715$ & $6.9365$ & $7.3562$ & $7.8444$ & $8.4197$ & $9.1356$ & $10.0611$ & $11.3482$ \\
    \hline
\end{tabular}
\medskip
\caption{Comparison between $\alpha_{\rm rad}(\lambda)$ and $\alpha(\lambda)$ found numerically for $d = 2$ and $p = 3$.}
\label{tab:alpha_2d}
\end{table}

\medskip

This strongly indicates that the optimal potentials are \textbf{not radial} in this regime. Instead, they are concentrated on a flower-shaped region, whose number of petals increases as $\lambda$ decreases.

\begin{figure}[ht]
\centering
\includegraphics[width=\linewidth]{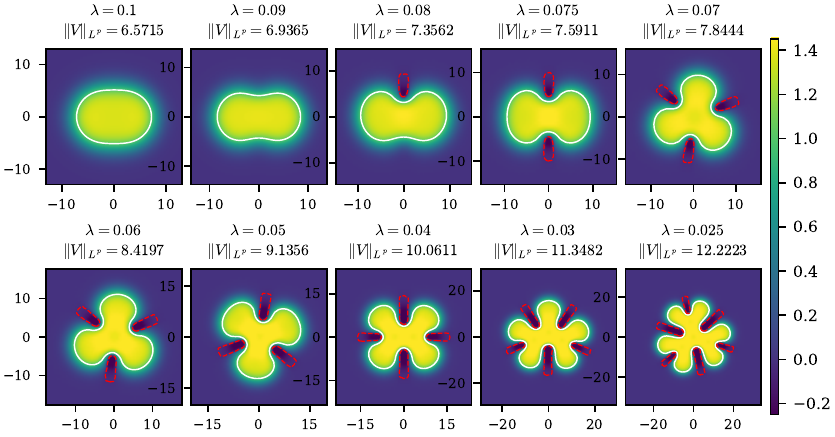}
\caption{Symmetry breaking in dimension $d = 2$. The best potentials $V$ found so far for $p = 3$ and for different values of $\lambda$ (note the different scales). The white line is the curve $\{V = m\}$, and the red lines surround the regions where $V < -0.02$.}
\label{fig:symmetry_breaking}
\end{figure}

\medskip

To compute a value of $\lambda$ under which the optimal potential cease to be optimal, we compute the stability of $V_{\rm rad}$ with respect to non-radial perturbation. More spectifically, 
we run the iteration from
\[
    W_0 := W_{\rm rad} \big( 1 + \eps \cos (n \theta) \big), \qquad \text{with} \qquad W_{\rm rad} := \frac{V_{\rm rad}}{\nrm{V_{\rm rad}}{L^p}}, \quad \eps = 10^{-3}, \quad n \in \{2, 3, 4\},
\] 
and find that $V_{\rm rad}$ becomes unstable in the direction $\cos(2\theta)$ for $\lambda < \lambda_c \approx 0.104$. In this regime, $V_{\rm rad}$ is not optimal, and the iteration started from random initial data converges to elongated critical points.

\medskip

\noindent\underline{The optimal potentials are no longer positive for $\lambda \le 0.08$.} Finally, it we continue to decrease the value of $\lambda$, we observe that the optimal potential take negative values for $\lambda \le 0.08$, see again Figure~\ref{fig:symmetry_breaking}. Interestingly, for $\lambda = 0.08$, we numerically find that, for the potential $V$ of Figure~\ref{fig:symmetry_breaking},
\[
    \lambda_1(V_+) \approx 0.080016 \qquad \text{and} \qquad \lambda_1(|V|) \approx 0.080029, \qquad \text{while} \qquad \lambda_1(V) = 0.08.
\]
In particular, the inequalities $\lambda_1(V_+) \le \lambda_1(V)$ and $\lambda_1(| V |) \le \lambda_1(V)$ fail for this particular $V$.

\section*{Acknowledgements}
The authors thank Jean Dolbeault and Fabio Pizzichillo for interesting discussions. The authors acknowledge support from ECOS 240024. H. VDB. received support from the Center for Mathematical Modeling (Universidad de Chile \& CNRS IRL 2807) through ANID/Basal project \# FB210005 and from ANID/Fondecyt project \# 125-0596.

\subsection*{Use of generative AI}
Claude Opus 5.5 was used to check and simplify some proofs, improve the bibliography, write most of the numerical code following the authors own algorithm used in~\cite{DolGonPiz-23}, help with the production of the images, and correct typos. No AI tools were used to produce mathematical results, nor to write the manuscript. 

\medskip

As we already mentioned in the manuscript, ChatGPT Astra claims to have a proof for Conjecture~\ref{thm:critical_alpha_dim3} (existence of a 3d potential generating a zero-mode). We do not yet understand its proof, and decided to put the result in a future work to keep this manuscript free of AI generated mathematical proofs.

\subsection*{Data availability} The code used for the numerical simulations of Appendix~\ref{sec:appendix:numerics}, and the corresponding data, are available from the authors upon request.

\subsection*{Conflict of interest}
The authors declare that they have no conflict of interest.

\printbibliography

\end{document}